\def\acts{\ \rotatebox[origin=c]{-90}{$\circlearrowright$}\ }
\def\racts{\ \rotatebox[origin=c]{90}{$\circlearrowleft$}\ }
\newtheorem{thm}{Theorem}[section]
\newtheorem{lem}[thm]{Lemma}
\newtheorem{conj}[thm]{Conjecture}
\newtheorem{claim}[thm]{Claim}
\newtheorem{prop}[thm]{Proposition}
\newtheorem{cor}[thm]{Corollary}
\newtheorem{eg}[thm]{Example}
\newtheorem{question}[thm]{Question}
\theoremstyle{definition}
\newtheorem{defn}[thm]{Definition}
\newtheorem{rmk}[thm]{Remark}
\newtheorem*{ack}{Acknowledgement}
\numberwithin{equation}{section}
\def\Q{{\mathbb Q}}
\def\P{{\mathbb P}}
\def\NE{\overline{NE}}
\DeclareMathOperator{\id}{id}
\DeclareMathOperator{\ord}{ord}
\newenvironment{claimproof}[0]
  {%
   \paragraph{\it Proof.}%
  }
  {%
    \hfill$\blacksquare$%
  }
\title[]
{Structure of Fano fibrations of varieties admitting an int-amplified endomorphism}
\author{Shou Yoshikawa}
\address{Graduate school of Mathematical Sciences, the University of Tokyo, Komaba, Tokyo,
153-8914, Japan}
\email{yoshikaw@ms.u-tokyo.ac.jp}
\begin{document}

\begin{abstract}
In this paper, we study the structure of Fano fibrations of varieties admitting an int-amplified endomorphism.
We prove that if a normal $\Q$-factorial klt projective variety $X$ has an int-amplified endomorphism, then there exists an \'etale in codimension one finite morphism $\widetilde{X} \to X$ such that $\widetilde{X}$ is of Fano type over its albanese variety.
As a corollary, if we further assume that $X$ is smooth and rationally connected, then $X$ is of Fano type.
\end{abstract}

\maketitle

\setcounter{tocdepth}{1}

\section{Introduction}
Let $X$ be a normal $\Q$-factorial klt projective variety over an algebraically closed field $k$ of characteristic zero.
We say that a surjective endomorphism $f \colon X \to X$ over $k$ is  \emph{int-amplified} if there exists an ample Cartier divisor $H$ on $X$ such that $f^*H-H$ is ample.
For example, non-invertible polarized endomorphisms are int-amplified.
Admiiting an int-amplified endomorphism imposes strong conditions on the structure of $X$.
Indeed, Nakayama \cite{nakayama02} proved that if $X$ is a smooth rational surface admitting a non-invertible surjective endomorphism, then $X$ is toric.
Recently, Meng \cite{meng17} proved the following theorem.
\begin{thm}[\cite{meng17}, cf. \cite{meng-zhang}]\label{meng's theorem}
We assume that $X$ has an int-amplified endomorphism.
There exists a quasi-\'etale finite cover $\mu \colon \widetilde{X} \to X$, that is, $\mu$ is an \'etale in codimension one finite morphism such that
the albanese morphism $\mathrm{alb}_{\widetilde{X}}$ is a fiber space whose general fiber is rationally connected.
\end{thm}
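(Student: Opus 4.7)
The plan is to study the Albanese morphism via its naturality and then deduce rational connectedness of the general fiber through the MRC fibration. First, the universal property of the Albanese produces an endomorphism $g \colon \mathrm{Alb}(X) \to \mathrm{Alb}(X)$ making the diagram $\mathrm{alb}_X \circ f = g \circ \mathrm{alb}_X$ commute. The int-amplified hypothesis on $f$ forces $g$ to be an isogeny: since $f^*$ acts on $\NS(X)_\R$ with all eigenvalues of absolute value strictly greater than one, a Hodge-theoretic comparison between the actions on $H^{1,1}$ and $H^1$ shows that $g$ is non-degenerate, hence surjective with finite kernel.

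Second, I would upgrade $\mathrm{alb}_X$ to a fiber space via Stein factorization, writing $\mathrm{alb}_X = h \circ a'$ with $a' \colon X \to Z$ having connected fibers and $h \colon Z \to \mathrm{Alb}(X)$ finite. Rigidity, together with the descent of $g$, produces $g' \colon Z \to Z$ satisfying $h \circ g' = g \circ h$; iterating $g$ and base-changing along an appropriate isogeny of $\mathrm{Alb}(X)$ then forces $h$ to become \'etale. In particular $Z$ is itself an abelian variety, and the pullback $\widetilde{X} := X \times_{\mathrm{Alb}(X)} Z$ is a quasi-\'etale cover of $X$ whose Albanese morphism is identified with the fiber space $\widetilde{X} \to Z$.

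For the rational connectedness of the general fiber of $\mathrm{alb}_{\widetilde{X}}$, I would pass to the MRC fibration $\pi \colon \widetilde{X} \dashrightarrow Y$. Its base $Y$ is non-uniruled, and after a further quasi-\'etale cover if necessary, the endomorphism $f$ descends to an int-amplified endomorphism on $Y$ by a standard equivariance argument relying on rigidity of morphisms to non-uniruled targets. A non-uniruled normal projective variety carrying an int-amplified endomorphism is, up to a quasi-\'etale cover, isogenous to an abelian variety (a Kawamata--Nakayama-type structure theorem); consequently the universal property of the Albanese forces $\pi$ to factor through $\mathrm{alb}_{\widetilde{X}}$, and a dimension count identifies the two fibrations, so the general fiber of $\mathrm{alb}_{\widetilde{X}}$ is rationally connected.

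The main obstacle is the descent of the endomorphism to the base of the MRC fibration, since MRC is only a rational map and its functoriality under endomorphisms is not automatic; this typically demands equivariant resolution of indeterminacies and relative MMP techniques to transport $f$. A secondary subtlety lies in the Stein-factorization step, where one must control the cover so that the resulting $\widetilde{X}$ is genuinely quasi-\'etale over $X$ and retains an int-amplified endomorphism — exactly the kind of delicate bookkeeping handled by the equivariant MMP machinery of Meng and Meng--Zhang.
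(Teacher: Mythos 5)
The theorem is cited from \cite{meng17} (cf.~\cite{meng-zhang}); the paper does not prove it, but rather takes it as known and strengthens it in Theorem~\ref{main thm}. The proof in the literature --- whose shape the paper's own arguments parallel --- uses the equivariant MMP of Theorem~\ref{equiMMP}: after iterating $f$, there is an $f$-equivariant MMP $X = X_0 \dashrightarrow \cdots \dashrightarrow X_r$ terminating in a $Q$-abelian variety $X_r$; the fibers of $X \dashrightarrow X_r$ are chains of Mori fiber space fibers, hence rationally connected, and lifting the quasi-\'etale abelian cover $A \to X_r$ back along $X \dashrightarrow X_r$ and normalizing produces the required $\widetilde{X}$, with $\widetilde{X} \to A$ identified with the Albanese morphism.

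Your proposal goes instead through the MRC fibration, which is a legitimate alternative strategy in spirit, but the argument as written has several gaps. You apply Stein factorization to $\mathrm{alb}_X$ without first establishing that it is surjective; surjectivity of the Albanese map is a nontrivial conclusion of the theorem, not a free input, since a priori the image is only a generating subvariety of $\mathrm{Alb}(X)$. The claim that $h \colon Z \to \mathrm{Alb}(X)$ becomes \'etale after base-change along an isogeny implicitly needs $h$ to be quasi-\'etale (so that Zariski--Nagata purity over the smooth abelian target yields \'etaleness), and nothing in your argument controls the ramification divisor of $h$; this typically requires an invariance argument for the branch divisor under the iterated endomorphism, which you have not supplied. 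Most seriously, the ``Kawamata--Nakayama-type structure theorem'' you invoke for the non-uniruled MRC base is exactly the zero-dimensional-fiber case of the statement being proved, and the known proofs of it run through the same abundance and MMP machinery you were hoping to sidestep. Finally, descending the int-amplified property to the MRC base is not covered by Proposition~\ref{first properties of int-ampl endo}(3), which requires source and target to have equal dimension, whereas the MRC quotient drops dimension; you correctly flag this descent as the main obstacle, but the fix you sketch (equivariant resolution and relative MMP) is precisely the Meng--Zhang approach, so the proposal ends up repackaging their proof rather than supplying an independent one.
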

Following the above results, we discuss the next question in this paper.
\begin{question}[\textup{cf. \cite[Question 6.6]{meng-zhang18-survey}}]\label{main question}
We assume that $X$ has an int-amplified endomorphism.
After replacing with a quasi-\'etale finite cover, 
is a general fiber of the albanese morphism of $X$ toric? 
In particular, if $X$ is smooth and rationally connected, then is $X$ toric?
\end{question}

First, we recall the notion of Fano type.
Given a projective morphism $Z \to B$ of normal varieties, we say that $Z$ is of {\em Fano type} over $B$ if there exists an effective $\Q$-Weil divisor $D$ on $Z$ such that $(Z,D)$ is klt and $-(K_Z+D)$ is ample over $B$ (see $\S$ 2 for the details).
When $B$ is a point, we simply say that $Z$ is of Fano type.
We note that if $Z$ is of Fano type over $B$, then a general fiber is of Fano type.
For example, toric varieties are of Fano type and projective bundles over a variety $B$ are of Fano type over $B$.
Zhang \cite{zhang06} and Hacon-Mckernan \cite{hacon-mackernan07} proved that varieties of Fano type are rationally connected.
On the other hand, smooth and rationally connected varieties are not necessarily of Fano type in general. 
Hence the following theorem strengthens Theorem \ref{meng's theorem} and gives a partial answer to Question \ref{main question}.

\begin{thm}[Theorem \ref{main thm'}]\label{main thm}
We assume that $X$ has an int-amplified endomorphism.
There exists a quasi-\'etale finite cover $\mu \colon \widetilde{X} \to X$ such that
the albanese morphism $\mathrm{alb}_{\widetilde{X}} \colon \widetilde{X} \to A$ is a fiber space and $\widetilde{X}$ is of Fano type over $A$.
\end{thm}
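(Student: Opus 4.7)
The plan is to take Meng's theorem (Theorem~\ref{meng's theorem}) as the starting point. After replacing $X$ by the quasi-\'etale cover provided there, we may assume that the albanese morphism $\mathrm{alb}_X \colon X \to A$ is a fiber space whose general fiber is rationally connected. The task becomes producing an effective $\Q$-Weil divisor $D$ on $X$ such that $(X,D)$ is klt and $-(K_X+D)$ is ample over $A$. A preliminary step is to ensure that the int-amplified endomorphism $f$ is compatible with $\mathrm{alb}_X$: by the universal property of the albanese variety, $f$ induces an endomorphism of $A$, which is forced to be an isogeny because $f$ is int-amplified. After pulling back by a further isogeny $A' \to A$ (which gives another quasi-\'etale cover of $X$), one can arrange that $f$ lifts to an int-amplified endomorphism of the new total space whose restriction to the generic fiber is again int-amplified.

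The next step is to run a $K_{X/A}$-MMP over $A$. Because the general fiber is rationally connected, it is uniruled, so $K_{X/A}$ is not relatively pseudo-effective, and the MMP terminates with a relative Mori fiber space $\pi \colon X' \to Y$ over $A$, where $X'$ is a $\Q$-factorial klt variety birational to $X$ over $A$. Using equivariance properties of the MMP for int-amplified endomorphisms (in the spirit of Meng--Zhang), after replacing $f$ by an iterate the map $f$ descends to an int-amplified endomorphism of $X'$ that is compatible with $\pi$ and with $Y \to A$.

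To construct the boundary, I would exploit the int-amplified endomorphism to produce an effective $\Q$-divisor. Concretely, for an ample $H$ with $f^{*}H - H$ ample, the ramification formula $K_{X'} = f^{*}K_{X'} + R_f$ and iterates $f^n$ yield divisors whose normalized limits can be arranged to be totally invariant and klt. On the generic fiber of $X' \to Y$, which is Fano, such a construction gives a klt boundary making the pair log Fano. The challenge is to globalize this fiberwise Fano-type structure to a relative one over $A$. Here one inducts on the relative dimension: combining the Fano-type structure on the fibers of $X' \to Y$ with a Fano-type structure on $Y \to A$ (obtained by iterating the argument, since the endomorphism on $Y$ is still int-amplified and $Y$ inherits the albanese factorization), one concatenates boundaries via the canonical bundle formula to produce the desired $D'$ on $X'$ with $-(K_{X'}+D')$ ample over $A$. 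Pulling $D'$ back along a common resolution of $X \dashrightarrow X'$ and using the negativity lemma then yields the required boundary $D$ on $X$.

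The main obstacle I expect is the equivariance of the relative MMP and the compatibility of the endomorphism through divisorial contractions and flips, which requires one to replace $f$ by carefully chosen iterates at each step while preserving the int-amplified property; a second substantial difficulty is promoting the fiberwise Fano-type statement on the generic fiber of $X' \to Y$ to a genuinely relative one over $A$, which is where the isogeny structure on the abelian variety $A$ and the absence of nontrivial totally $f$-invariant effective divisors on $A$ must be used to absorb boundary contributions from the canonical bundle formula.
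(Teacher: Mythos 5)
Your route differs in a substantive way from the paper's: you want to start from Meng's theorem (Theorem~\ref{meng's theorem}), reduce to the case where $\mathrm{alb}_X$ is already a fiber space with rationally connected fibers, and then run a relative $K_{X/A}$-MMP over $A$. The paper instead runs an \emph{absolute} MMP, tracking an explicit boundary down the tower of Mori fiber spaces, and the abelian variety $A$ is \emph{constructed} at the end as a quasi-\'etale cover of the output $(W,\Delta_W)$ with $K_W+\Delta_W\sim_{\Q}0$, rather than taken as input from Meng's theorem. So the paper does not invoke Theorem~\ref{meng's theorem} as a black box; it in effect reproves a stronger version, and the fiber-space structure of the albanese emerges from the construction. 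Your shortcut of using Meng's theorem as a starting point is legitimate in principle, but it displaces rather than eliminates the real work.

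Where I see a genuine gap is exactly at the place you flag as ``the challenge.'' You want to concatenate a Fano-type structure on the fibers of $X'\to Y$ with one on $Y\to A$ to obtain a Fano-type structure on $X'\to A$, absorbing the canonical-bundle-formula contributions. But ``Fano type over $Y$'' plus ``Fano type over $A$'' does not compose; this is precisely the nontrivial point. The paper's resolution is the notion of a Mori fiber space of canonical bundle formula type (Definition~\ref{definition of mfs of cbf type}), with the explicit boundary $\ord_E(\Gamma)=\frac{m_E-1+\ord_F(\Delta)}{m_E}$ on the base. Proposition~\ref{mfs of cbf type for f-pairs} shows this boundary is again an $f$-pair and that $R_\Delta-\pi^*R_\Gamma$ is effective with no vertical components; Lemma~\ref{ramification divisor is relative ample} shows $R_\Delta$ is relatively ample; and Lemma~\ref{Fano type if the ramification divisor contains ample div} turns these into an inductive argument that propagates Fano type up the tower (Corollary~\ref{fano type of a tower of mfs}). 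None of this appears in your proposal, and ``normalized limits of iterates of $f^*H-H$'' is not a substitute for it: the issue is not finding an effective anti-log-canonical divisor (that is easy from $R_\Delta\geq 0$) but controlling its singularities relative to the whole tower. A secondary, more local error: you write that $f$ lifts so that ``its restriction to the generic fiber is again int-amplified.'' Since $f$ covers an isogeny $f_A$ of $A$ which is in general not the identity, $f$ permutes the fibers of $\mathrm{alb}_X$ rather than preserving any one of them, so the restriction does not exist. The paper never needs a fiberwise endomorphism; it works entirely with the global $f$-pair structure.
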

Furthermore, if $X$ is smooth and rationally connected, then $\widetilde{X}$ has to coincide with $X$ and $A$ has to coincide with a point.
Hence, as a corollary of Theorem \ref{main thm}, we obtain the following result, which gives an affirmative answer to \cite[Conjecture 1.2]{broustet-gongyo17} in the smooth and rationally connected case.

\begin{cor}[Corollary \ref{main cor'}]\label{main cor}
We assume that $X$ has an int-amplified endomorphism.
If $X$ is smooth and rationally connected, then it is of Fano type.
\end{cor}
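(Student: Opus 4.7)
The plan is to deduce the corollary from Theorem \ref{main thm} by showing that, under the smoothness and rational connectedness hypotheses, the quasi-\'etale cover $\widetilde{X}$ and the Albanese target $A$ are both forced to be trivial.

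First, I would apply Theorem \ref{main thm} to produce a quasi-\'etale finite cover $\mu \colon \widetilde{X} \to X$ together with a fiber space structure $\mathrm{alb}_{\widetilde{X}} \colon \widetilde{X} \to A$ making $\widetilde{X}$ of Fano type over $A$. We may assume $\widetilde{X}$ is connected by passing to a connected component if necessary. The next step is to upgrade "quasi-\'etale" to "\'etale": since $X$ is smooth, the Zariski--Nagata purity of branch locus applies to the finite map $\mu$ between a normal variety and a regular one, so the ramification locus (if nonempty) would have to be pure of codimension one in $X$; because $\mu$ is \'etale in codimension one, this forces $\mu$ to be everywhere \'etale.

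Next I would invoke the theorem of Campana and Koll\'ar (for smooth projective rationally connected varieties over an algebraically closed field of characteristic zero) stating that $\pi_1^{\mathrm{\acute{e}t}}(X) = 1$. Since $\mu$ is a connected finite \'etale cover, this forces $\widetilde{X} = X$ and $\mu = \mathrm{id}$. Then I would use the fact that a rationally connected smooth projective variety has trivial Albanese variety: every morphism from $X$ to an abelian variety is constant (because rational curves dominate $X$ and map trivially to abelian varieties), so $A$ is a point. Combining these, the fiber space $\mathrm{alb}_{\widetilde{X}} \colon \widetilde{X} \to A$ becomes the structure morphism $X \to \Spec k$, and $\widetilde{X}$ being of Fano type over $A$ is exactly the statement that $X$ is of Fano type.

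There is essentially no hard step here once Theorem \ref{main thm} is granted; the whole argument is a short application of two standard tools (purity of the branch locus, and the vanishing of $\pi_1^{\mathrm{\acute{e}t}}$ and $\mathrm{Alb}$ for smooth rationally connected varieties). The only mild subtlety is ensuring that the quasi-\'etale cover can be taken connected, so that the \'etale fundamental group argument actually forces $\widetilde{X} \cong X$; this can be handled by replacing $\widetilde{X}$ with any connected component, which inherits all the relevant properties.
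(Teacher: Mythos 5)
Your proof is correct and follows essentially the same line as the paper: both invoke Theorem~\ref{main thm}, the Zariski--Nagata purity of the branch locus, and the triviality of $\pi_1^{\mathrm{\acute et}}$ for smooth rationally connected varieties (Campana, Koll\'ar) to collapse the cover and the Albanese target. The only minor divergence is cosmetic: by using that $\pi_1^{\mathrm{\acute et}}(X)$ is actually trivial (not merely finite), you identify $\widetilde{X}$ with $X$ outright and so do not need Proposition~\ref{quasi-etale preserves fano ytpe}, whereas the paper, proving the stronger Corollary~\ref{main cor'} where $\pi_1^{\mathrm{\acute et}}(X_{\mathrm{sm}})$ is only assumed finite, keeps the nontrivial cover, shows $A$ is a point via the finiteness of $\pi_1^{\mathrm{\acute et}}(\widetilde{X}_{\mathrm{sm}})$, and then descends Fano type from $\widetilde{X}$ to $X$ by that proposition.
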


We briefly explain how to prove Theorem \ref{main thm}.
First suppose that $K_X$ is not pseudo-effective.
Running a minimal model program (MMP, for short) for $X$, we obtain a birational map $\sigma_0 \colon X \dashrightarrow X'$ and a Mori fiber space $\pi_0 \colon X' \to X_1$.
Then we construct an effective $\Q$-Weil divisor $\Delta_1$ on $X_1$ as follows, 
\[
\ord_E(\Delta_1)=\frac{m_E-1}{m_E}
\]
for any prime divisor $E$ on $X_1$, where $m_E$ is a positive integer satisfying $\pi_0^*E=m_E F$ for some prime divisor $F$ on $X'$.
Next, we further assume that $K_{X_1}+\Delta_1$ is not pseudo-effective.
Running an MMP for $(X_1,\Delta_1)$, we obtain a birational map $\sigma_1 \colon X_1 \dashrightarrow X'_1$ and a $(K_{X_1}+\Delta_1)$-Mori fiber space $\pi_1 \colon X_1' \to X_2$.
Then we construct an effective $\Q$-Weil divisor $\Delta_2$ on $X_2$ as follows, 
\[
\ord_E(\Delta_2)=\frac{m_E-1+\ord_F(\Delta_1')}{m_E}
\]
for any prime divisor $E$ on $X_2$, where $F$ is a prime divisor on $X_1'$ satisfying $\pi_1^*E=m_E F$ with positive integer  $m_E$.
Repeating such a process, we obtain the following sequence of rational maps and morphisms
\[
\xymatrix@R=15pt{
X \ar@{-->}[r]^-{\sigma_0} & X' \ar[d]^-{\pi_0} & & &  &    \\
& (X_1,\Delta_1) \ar@{-->}[r]^-{\sigma_1} & (X'_1,\Delta'_1) \ar[d]^-{\pi_1} & &  &   \\
& & (X_2,\Delta_2) \ar@{-->}[r]^-{\sigma_1} & \ar@{}[dr]|\ddots &  & \\
&  &  &      & \ar@{-->}[r]^-{\sigma_r}  &  (X_r',\Delta_r') \ar[d]^-{\pi_r} \\
&  &   &   &   & (W,\Delta_W), \\
}
\]
where $K_W+\Delta_W$ is pseudo-effective.
If $K_X$ or $K_{X_1}+\Delta_1$ is pseudo-effective, we define $(W,\Delta_W)$ as $(X,0)$ or $(X_1,\Delta_1)$, respectively.
After iterating $f$, we prove that there exist an $f$-equivarient birational map $X \dashrightarrow Y$ and a sequence of Mori fiber spaces from $Y$ to $W$ such that the following diagram commutes
\[
\xymatrix@R=15pt{
X \ar@{-->}[r]^-{\sigma_0} & X' \ar[d]^-{\pi_0} \ar@{-->}[rrrr] & &  &  & Y \ar[d]   \\
& (X_1,\Delta_1) \ar@{-->}[r]^-{\sigma_1} & (X'_1,\Delta'_1) \ar[d]^-{\pi_1}  & & &  Y_1 \ar[d] \\
& & (X_2,\Delta_2) \ar@{-->}[r]^-{\sigma_1}  & \ar@{}[dr]|{\ddots} & & \vdots \ar[d] \\
&  &  & & \ar@{-->}[r]^-{\sigma_r}  &  (X_r',\Delta_r') \ar[d]^-{\pi_r} \\
&  & &  & &  (W,\Delta_W). \\
}
\]

Since the above rational maps and morphisms are $f$-equivariant, $W$ has an int-amplified endomorphism $h$ and $R_{\Delta_W} :=R_h+\Delta_W-h^*\Delta_W $ is an effective divisor
(see Proposition \ref{mfs of cbf type for f-pairs}), where $R_h$ is the ramification divisor of $h$. 
The effectivity of $R_{\Delta_W}$ implies that $-(K_W+\Delta_W)$ is pseudo-effective (see \cite{meng17}), hence $K_W+\Delta_W$ is $\Q$-linearly trivial.
Then we prove that $W$ has a finite cover by an abelian variety $A$ (see Proposition \ref{convering theorem}).
Moreover we can lift this cover to $X$ as follows,
\[
\xymatrix{
\widetilde{X} \ar[d]_\mu \ar@{-->}[r]^{\widetilde{\pi}} & A \ar[d] \\
X \ar@{-->}[r]_{\pi} & W,
}
\]
where $\mu$ is a quasi-\'etale finite morphism (see Theorem \ref{lifting of cover of abelian variety}),
and in particular, $\widetilde{\pi}$ and $\pi$ are morphisms.

Finally, we prove that $\widetilde{X}$ is of Fano type over $A$.
Note that $Y$ is of Fano type over $W$ (see Theorem \ref{construction of a towers of mfs}).
Since beeing of Fano type over $W$ is invariant under every equivarient birational map with respect to an int-amplified endomorphism (see Proposition \ref{equivariant birational map preserves fano type}), $X$ is also of Fano type over $W$.
Moreover, since $\mu$ is quasi-\'etale, $\widetilde{X}$ is also of Fano type over $A$.
In conclusion, we obtain Theorem \ref{main thm}.

By the construction of $\widetilde{X}$, we can show that every surjective endomorphism of $X$ lifts to $\widetilde{X}$. 

\begin{thm}[Theorem \ref{main thm'}]\label{main thm; lifting}
We assume that $X$ has an int-amplified endomorphism.
For every surjective endomorphism $\phi$ of $X$,
there exists a following commutative diagram
\[
\xymatrix{
X  \ar[d]_{\phi^m}& \widetilde{X} \ar[l]_\mu \ar[r]^{\mathrm{alb}_{\widetilde{X}}} \ar[d]_{\widetilde{\phi}} & A \ar[d]_{\phi_A} \\
X & \widetilde{X} \ar[l]^\mu \ar[r]_{\mathrm{alb}_{\widetilde{X}}} & A \\
}
\]
for some positive integer $m$, where $\mu$, $\mathrm{alb}_{\widetilde{X}}$, $\widetilde{X}$, $A$ are in Theorem \ref{main thm}, $\widetilde{\phi}$ and $\phi_A$ are surjective endomorphisms.
\end{thm}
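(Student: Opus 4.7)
The plan is to descend an iterate $\phi^m$ through the layers of the construction of $\widetilde X$ in the proof of Theorem \ref{main thm}, and then lift it back to $\widetilde X$. That construction has three layers: an $f$-equivariant tower of MMPs and Mori fiber spaces $X \dashrightarrow X' \to (X_1,\Delta_1) \dashrightarrow \cdots \to (W,\Delta_W)$ (Theorem \ref{construction of a towers of mfs}), a quasi-\'etale finite cover $A \to W$ by an abelian variety (Proposition \ref{convering theorem}), and the lifted quasi-\'etale cover $\widetilde X \to X$ produced from it (Theorem \ref{lifting of cover of abelian variety}). I would make a suitable iterate $\phi^m$ compatible with each layer in turn.

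For the first layer, I would show that after replacing $\phi$ by an iterate $\phi^{m_1}$, it descends to a surjective endomorphism $\phi_W$ of $W$. The key point is that the tower, although built via MMPs, is canonical up to finite ambiguity: the boundaries $\Delta_i$ are defined intrinsically from the ramification of the $\pi_i$, and on each pair $(X_i,\Delta_i)$ only finitely many $(K_{X_i}+\Delta_i)$-Mori fiber space structures exist. Hence $\phi$ acts by pullback on the finite set of tower data obtained from $X$ in this way, and by a pigeonhole argument, essentially the same mechanism that produces $f$-equivariance in the proof of Theorem \ref{construction of a towers of mfs}, some iterate $\phi^{m_1}$ preserves the entire tower, yielding the desired $\phi_W$.

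For the second layer, I would lift $\phi_W$ to $\phi_A$ on $A$. Since $A \to W$ is a quasi-\'etale finite cover and $A$ is abelian, the pullback $A \times_{W,\phi_W^{m_2}} W$ under any surjective endomorphism of $W$ is again a quasi-\'etale finite cover of $W$ of the same degree, and up to isomorphism over $W$ there are finitely many such covers. Hence an iterate $\phi_W^{m_2}$ preserves $A \to W$ and lifts to a surjective endomorphism $\phi_A$ of $A$. With $m = m_1 m_2$, the endomorphism $\widetilde\phi$ on $\widetilde X$ is then obtained as the unique lift of the pair $(\phi^m, \phi_A)$, coming from the cartesian nature of the construction of $\widetilde X$ in Theorem \ref{lifting of cover of abelian variety} (essentially a normalized fiber product of $X$ and $A$ over $W$). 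Surjectivity of $\widetilde\phi$ and $\phi_A$ is automatic from that of $\phi^m$.

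The main obstacle is the first step: reducing the non-canonicity of the MMP choices to a finite pigeonhole argument requires carefully tracking how the intrinsic boundaries $\Delta_i$ and the Mori fiber space targets at each stage behave under an arbitrary surjective endomorphism, not just under the given int-amplified $f$. Once this equivariance over $W$ is secured, the finite-cover arguments on $A \to W$ and $\widetilde X \to X$ are essentially formal.
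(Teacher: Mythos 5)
Your overall plan---descend an iterate of $\phi$ through the tower to $W$, lift it to the abelian cover $A$, and then reconstruct $\widetilde{\phi}$ on $\widetilde{X}$ via the fiber-product description of $\widetilde{X}$---matches the structure of the paper's argument, which is carried out in the last assertion of Theorem \ref{lifting of cover of abelian variety} (and then specialized to $\Delta=0$ in Theorem \ref{main thm'}). The third layer is exactly as you describe it. The differences, and the trouble, are in the first two layers.

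For the first layer, the paper does not need your ``pigeonhole on tower data,'' and in fact that formulation obscures what is being used. The tower is a \emph{fixed} finite sequence of extremal-ray contractions, and Theorem \ref{equivariant extremal ray contraction}, which quotes Meng--Zhang, already says that for \emph{any} surjective endomorphism $\phi$ of a $\Q$-factorial klt projective variety admitting an int-amplified endomorphism, any prescribed finite sequence of steps of a $(K+\Delta)$-MMP becomes $\phi^{n}$-equivariant for some $n$. The point is that the finiteness of negative extremal rays comes from the \emph{existence} of an int-amplified endomorphism, not from $\phi$ being int-amplified, and then $\phi_{*}$ permutes and hence eventually fixes each ray appearing in the tower. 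Applying this stage by stage is precisely Remark \ref{steps of MMP of canonical bundle formula type preserves f-pair}. So the ``main obstacle'' you flag dissolves once you notice that Theorem \ref{equivariant extremal ray contraction} is stated for arbitrary surjective endomorphisms. Your picture of ``canonicity up to finite ambiguity'' and an action of $\phi$ on the finite set of towers is not what the paper does and would require separate justification (e.g., that $\phi$-preimage of a tower is again a tower), which is simply unnecessary.

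The second layer is a genuine gap. You claim that there are only finitely many quasi-\'etale covers of $W$ of the given degree and then pigeonhole to get $(\phi_{W}^{m_2})^{*}A \cong A$. Two problems: first, pullback of a connected cover under a non-\'etale finite surjective $\phi_W$ can be disconnected (so one has to pass to a main component and the degree is not obviously preserved), and second, even granting a finite orbit $\{(\phi_W^m)^*A\}_m$, pigeonhole only produces $(\phi_W^a)^*A \cong (\phi_W^b)^*A$ for some $a<b$, hence periodicity of the \emph{tail} of the orbit---it does not give $(\phi_W^{m_2})^*A \cong A$ unless you also prove $\phi_W^*$ is injective on this set, which you do not. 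The paper sidesteps all this: Proposition \ref{convering theorem} shows that the $\psi$-pair condition on the klt log Calabi--Yau pair $(W,\Delta_W)$ forces $\Delta_W$ to have standard coefficients, which identifies $A\to W$ as the canonical quasi-\'etale cover attached to a $Q$-abelian variety, and the lifting of any surjective endomorphism to that cover is a structural theorem (see the remark after Theorem \ref{equiMMP} citing \cite[Lemma 8.1, Corollary 8.2]{cascini-meng-zhang} and the use of \cite{yoshikawa19} in Proposition \ref{convering theorem}). You should replace the counting argument by this structural lifting result.
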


\vspace{15pt}
{\bf Notation and Terminology.} \\
Throughout this paper, all varieties difined over an algebraically closed field $k$ of characteristic zero and every morphism of varieties is a morphism over $k$. We will freely use the standard notations in \cite{kollar-mori}.
\begin{itemize}
\item A morphism $f \colon X \to X$ from a projective variety $X$ to itself is called self-morphism of $X$ or endomorphism of $X$.
If it is surjective, then it is a finite morphism.
\item A morphism $f \colon X \to Y$  between varieties is called quasi-\'etale if $f$ is \'etale at every codimension one point of $X$.
\item Let $f \colon X \to Y$ be a finite surjective morphism between normal varieties. The ramification divisor of $f$ is denoted by $R_{f}$.
\item The function field of a variety $X$ is denoted by $K(X)$.
\item Let $f \colon X \to X$ be an endomorphism of a variety $X$.  A subset $S\subset X$ is called totally invariant under $f$ if $f^{-1}(S)=S$ as sets.
\item Consider the commutative diagram
\[
\xymatrix{
X \ar@{-->}[r]^{\pi} \ar[d]_{f} & Y \ar[d]^{g} \\
X \ar@{-->}[r]_{\pi} & Y,
}
\]
where $f, g$ are surjective morphisms and $\pi$ is a dominant rational map.
We write this diagram as 
\[
\xymatrix{
f \acts X \ar@{-->}[r]^{\pi}& Y \racts g.
}
\]
We say a commutative diagram is equivariant if each object is equipped with an endomorphism and the morphisms are equivariant with respect to these 
endomorphisms.
\end{itemize}

\begin{ack}
The author wishes to express his gratitude to his supervisor Professor Shunsuke Takagi for his encouragement, valuable advice and suggestions. He is also
grateful to Dr.~Kenta Sato, Dr.~Kenta Hashizume, Dr.~Yohsuke Matsuzawa, Dr.~Sho Ejiri, Dr.~Masaru Nagaoka, Professor Yoshinori Gongyo, Professor Sheng Meng and Profesor De-Qi Zhang for their helpful comments and suggestions. 
This work was supported by the Program for Leading Graduate Schools, MEXT, Japan.
\end{ack}

\section{Preliminaries}

\subsection{Varieties of Fano type and Calabi--Yau type}
In this paper, we use the following terminology.

\begin{defn}[{cf.~\cite[Definition 2.34]{kollar-mori}, \cite[Remark 4.2]{schwede-smith}}]
Let $X$ be a normal variety and $\Delta$ be an effective $\Q$-Weil divisor on $X$ such that $K_X+\Delta$ is $\Q$-Cartier.
Let $\pi \colon Y \to X$ be a birational morphism from a normal variety $Y$. 
Then we can write
\[
K_Y=\pi^*(K_X+\Delta)+\sum_{E}(a_E(X,\Delta)-1)E,
\]
where $E$ runs through all prime divisors on $Y$.
We say that the pair $(X,\Delta)$ is {\em log canonical} or {\em lc}, for short (resp.,~{\em Kawamata log terminal} or {\em klt}, for short) if $a_E(X,\Delta) \geq 0$ (resp., $a_E(X,\Delta)>0$) for every prime divisor $E$ over $X$. 
If $\Delta=0$, we simply say that $X$ is log canonical (resp., klt).
\end{defn}

\begin{defn}[cf. {\cite[Lemma-Definition 2.6]{prokhorov-shokurov}}]
Let $\pi \colon X \to B$ be a projective morphism of normal varieties and $\Delta$ be an effective $\Q$-Weil divisor on $X$.
\begin{enumerate}
    \item We say that $(X,\Delta)$ is {\em log Fano over $B$} if $-(K_X+\Delta)$ is $\pi$-ample $\Q$-Cartier and $(X,\Delta)$ is klt.
    We say that $(X,\Delta)$ is of {\em Fano type over $B$} if there exists an effective $\Q$-Weil divisor $\Gamma$ on $X$ such that $(X,\Delta+\Gamma)$ is log Fano over $B$.
    If $B$ is a point, we simply say that $(X,\Delta)$ is of Fano type.
    \item We say that $(X,\Delta)$ is {\em log Calabi--Yau over $B$} if $K_X+\Delta \sim_{\Q, B} 0$ and $(X,\Delta)$ is log canonical.
    We say that $(X,\Delta)$ is of {\em Calabi--Yau type over $B$} if there exists an effective $\Q$-Weil divisor $\Gamma$ on $X$ such that $(X,\Delta+\Gamma)$ is log Calabi--Yau over $B$.
    If $B$ is a point, we say that $(X,\Delta)$ is of Calabi-Yau type.
\end{enumerate}

\end{defn}

We introduce the following basic properties.

\begin{prop}\label{first property of Fano type}
Let $\pi \colon X \to B$ be a surjective projective morphism of normal projective varieties and $\Delta$ an effective $\Q$-Weil divisor on $X$.
Then the following conditions are equivalent to each other.
\begin{enumerate}
    \item $(X,\Delta)$ is of Fano type over $B$.
    \item There exists an effective $\pi$-big $\Q$-Weil divisor $\Omega$, that is, $\Omega$ is a sum of an $\pi$-ample $\Q$-Cartier divisor and an effective $\Q$-Weil divisor such that $(X,\Delta+\Omega)$ is klt and $K_X+\Delta+\Omega \sim_{\Q,B} 0$.
    \item There exists an effective $\pi$-big $\Q$-Weil divisor $\Omega$ such that $(X,\Delta+\Omega)$ is klt and $K_X+\Delta+\Omega \equiv_{B} 0$.
\end{enumerate}
\end{prop}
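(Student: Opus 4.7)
The plan is to prove the circular implications $(1) \Rightarrow (2) \Rightarrow (3) \Rightarrow (1)$. The implication $(2) \Rightarrow (3)$ is immediate since $\Q$-linear equivalence over $B$ implies numerical equivalence over $B$. For $(1) \Rightarrow (2)$, I start from the divisor $\Gamma$ guaranteed by the Fano-type hypothesis and perform a Bertini-style perturbation: choose a general effective $\Q$-divisor $A \sim_{\Q,B} -(K_X+\Delta+\Gamma)$ arising from a sufficiently divisible $\pi$-very ample relative linear system, and set $\Omega := \Gamma + A$. Then $K_X+\Delta+\Omega \sim_{\Q,B} 0$; the pair $(X, \Delta+\Gamma+A)$ remains klt by relative Bertini; and $\Omega$ is $\pi$-big because its summand $A$ is $\Q$-linearly equivalent over $B$ to the $\pi$-ample divisor $-(K_X+\Delta+\Gamma)$.

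The main content is $(3) \Rightarrow (1)$. Fix a $\pi$-ample $\Q$-Cartier divisor $H$ on $X$. Since $\Omega$ is $\pi$-big, the relative Kodaira lemma yields an $\epsilon > 0$ and an effective $\Q$-Weil divisor $E$ with $\Omega \sim_{\Q,B} \epsilon H + E$. For a small rational $\delta > 0$, define
\[
\Gamma \;:=\; (1-\delta)\,\Omega + \delta E,
\]
which is effective as a sum of effective $\Q$-divisors. Then
\[
-(K_X + \Delta + \Gamma) \;\equiv_B\; \Omega - \Gamma \;=\; \delta(\Omega - E) \;\sim_{\Q,B}\; \delta \epsilon H,
\]
which is $\pi$-ample, so $-(K_X+\Delta+\Gamma)$ is $\pi$-ample by Kleiman's criterion. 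The same chain shows that $K_X+\Delta+\Gamma$ is $\Q$-linearly equivalent over $B$ to the $\Q$-Cartier divisor $-\delta\epsilon H$, hence is itself $\Q$-Cartier.

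The main obstacle is verifying that $(X, \Delta+\Gamma)$ is klt, since the perturbation simultaneously scales down $\Omega$ and adds the particular (non-general) divisor $E$ coming from Kodaira's lemma. I handle this on a fixed log resolution $f \colon Y \to X$ of $(X, \Delta+\Omega+E)$: on $Y$, the log discrepancy of each prime divisor with respect to $(X, \Delta+(1-\delta)\Omega+\delta E)$ depends affinely on $\delta$, and at $\delta = 0$ coincides with the strictly positive log discrepancy of the klt pair $(X, \Delta+\Omega)$. Only finitely many prime divisors on $Y$ have nonzero slope in $\delta$, so for $\delta$ sufficiently small every log discrepancy remains strictly positive, giving the required klt property and hence condition $(1)$.
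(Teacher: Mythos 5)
Your strategy matches the paper's: for $(1)\Rightarrow(2)$ you add a general effective divisor $\Q$-linearly trivializing the relative log anti-canonical class, and for $(3)\Rightarrow(1)$ you perturb $\Omega$ toward an effective divisor so as to peel off a small $\pi$-ample piece; your log-resolution argument for klt-ness is precisely the standard justification of the step the paper leaves implicit. Two small points. First, the detour through the relative Kodaira lemma is unnecessary: the proposition \emph{defines} ``$\pi$-big'' as a sum $\Omega = A + D$ of a $\pi$-ample $\Q$-Cartier $A$ and an effective $\Q$-Weil $D$, so you already hold the needed decomposition; the paper just takes $\Gamma := (1-\epsilon)\Omega + \epsilon D$, giving $K_X+\Delta+\Gamma = (K_X+\Delta+\Omega) - \epsilon A$. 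Second, the sentence claiming ``$K_X+\Delta+\Gamma$ is $\Q$-linearly equivalent over $B$ to $-\delta\epsilon H$'' does not follow from your chain: that chain opens with $\equiv_B$ (since hypothesis $(3)$ gives only numerical triviality of $K_X+\Delta+\Omega$ over $B$), so it yields only $-(K_X+\Delta+\Gamma)\equiv_B \delta\epsilon H$. The $\Q$-Cartier conclusion is nonetheless true, but for a different reason: $K_X+\Delta+\Omega$ is $\Q$-Cartier (built into the klt hypothesis) and $\Omega - E$ is $\Q$-Cartier (being $\Q$-linearly equivalent over $B$ to $\epsilon H$), so the $\Q$-linear combination $K_X+\Delta+\Gamma = (K_X+\Delta+\Omega) - \delta(\Omega - E)$ is $\Q$-Cartier. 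With the paper's decomposition this is automatic since $A$ is $\Q$-Cartier by definition.
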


\begin{proof}
First we assume that $(X,\Delta)$ is of Fano type over $B$.
Then there exists an effective $\Q$-Wei divisor $\Gamma$ such that $(X,\Delta+\Gamma)$ is klt and $-(K_X+\Delta+\Gamma)$ is $\pi$-ample. 
Then we can take an effective $\Q$-Weil divisor $\Omega'$ which is $\Q$-linear equivalent to $-(K_X+\Delta+\Gamma)$ over $B$ such that $(X,\Delta+\Gamma+\Omega')$ is klt.
Therefore, $\Omega=\Gamma+\Omega' \sim_{\Q,B} -(K_X+\Delta)$ is big over $B$, $K_X+\Delta+\Omega \sim_{\Q,B} 0$ and $(X,\Delta+\Omega)$ is klt.

It is clear that the second condition implies the third condition.

Next we assume that we can take $\Omega$ satisfying the third condition.
Then there exist a $\pi$-ample $\Q$-Cartier divisor $A$ and an effective $\Q$-Weil divisor $D$ such that $\Omega = A + D$.
Since $(X,\Delta+\Omega)$ is klt, for enough small $\epsilon>0$ such that $(X,\Delta+(1-\epsilon)\Omega+\epsilon D)$ is klt and
\[
K_X+\Delta+(1-\epsilon)\Omega+\epsilon D \equiv_{B} -\epsilon A
\]
is anti-ample over $B$.
It means that $(X,\Delta)$ is of Fano type over $B$.
\end{proof}

\begin{prop}\label{fano type birational contraction}
We consider the following commutative diagram
\[
\xymatrix{
X \ar@{-->}[rr]^-{\mu} \ar[rd]_-{\pi_X}&  &  Y \ar[ld]^{\pi_Y} \\
& B, & \\
}
\]
where $X, Y, B$ are normal projective varieties, $\pi_X, \pi_Y$ are surjective projective morphisms and $\mu$ is a birational contractin, that is, $\mu^{-1}$ has no exceptional divisors.
Let $\Delta$ be an effective $\Q$-Weil divisor on $X$ such that $(X,\Delta)$ is of Fano type over $B$ and $\Delta'=\mu_*\Delta$.
Then $(Y,\Delta')$ is also of Fano type over $B$.

\end{prop}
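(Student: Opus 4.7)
The plan is to use Proposition~\ref{first property of Fano type} in both directions: apply the implication (1)$\Rightarrow$(2) to $(X,\Delta)$ to obtain a convenient auxiliary divisor $\Omega$, set $\Omega':=\mu_*\Omega$ on $Y$, verify the three conditions of (2) for $(Y,\Delta',\Omega')$, and then apply (2)$\Rightarrow$(1). Thus I pick an effective $\pi_X$-big $\Q$-Weil divisor $\Omega$ on $X$ with $(X,\Delta+\Omega)$ klt and $K_X+\Delta+\Omega\sim_{\Q,B}0$.

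Since $\mu$ is a birational contraction, $\mu^{-1}$ extracts no divisors, so $\mu$ induces an isomorphism between open subsets $U\subset X$ and $V\subset Y$ with $\mathrm{codim}(Y\setminus V)\geq 2$. Writing $K_X+\Delta+\Omega=\pi_X^{*}M+\tfrac{1}{n}\mathrm{div}(f)$ for some $\Q$-Cartier divisor $M$ on $B$ and rational function $f$, restricting this identity to $U$, transporting it to $V$ via the isomorphism, and extending uniquely to all of $Y$ (using that a $\Q$-Weil divisor on a normal variety is determined by its restriction to any open subset whose complement has codimension at least two), I obtain $K_Y+\Delta'+\Omega'=\pi_Y^{*}M+\tfrac{1}{n}\mathrm{div}(f)$ as an equality of $\Q$-Weil divisors on $Y$. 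Consequently $K_Y+\Delta'+\Omega'$ is $\Q$-Cartier and $\sim_{\Q,B}0$. For the klt property, I take a common log resolution $p\colon W\to X$, $q\colon W\to Y$ of $\mu$; pulling the two identities back via $p$ and $q$ respectively, both sides become $\pi_W^{*}M+\tfrac{1}{n}\mathrm{div}(f)$, so $p^{*}(K_X+\Delta+\Omega)=q^{*}(K_Y+\Delta'+\Omega')$ on $W$. Comparing the two canonical bundle formulae prime-divisor-by-prime-divisor on $W$ then yields $a_E(X,\Delta+\Omega)=a_E(Y,\Delta'+\Omega')$ for every prime $E$ on $W$; since any divisor over $Y$ is realised on some such common log resolution, klt of $(X,\Delta+\Omega)$ forces klt of $(Y,\Delta'+\Omega')$.

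The main obstacle is to verify that $\Omega'$ is $\pi_Y$-big in the sense of Proposition~\ref{first property of Fano type}(2): it must decompose as a $\pi_Y$-ample $\Q$-Cartier divisor plus an effective $\Q$-Weil divisor. Writing $\Omega=A+D$ with $A$ $\pi_X$-ample $\Q$-Cartier and $D$ effective, the naive decomposition $\Omega'=\mu_*A+\mu_*D$ fails because $\mu_*A$ is generally only a Weil divisor, neither $\Q$-Cartier nor ample. My plan here is to replace $A$ by a general effective $\Q$-Cartier divisor $A_1\sim_{\Q,B}A$ before pushing forward, to choose a small $\pi_Y$-ample $\Q$-Cartier divisor $H$ on $Y$, and to exploit the freedom within the $\sim_{\Q,B}$ class to arrange that $\Omega'-H$ be effective as a $\Q$-Weil divisor, with all bookkeeping done on a common resolution. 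Once the bigness of $\Omega'$ is in hand, Proposition~\ref{first property of Fano type}(2)$\Rightarrow$(1) applied to $(Y,\Delta',\Omega')$ concludes that $(Y,\Delta')$ is of Fano type over $B$.
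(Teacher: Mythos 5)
Your overall strategy coincides with the paper's: use Proposition~\ref{first property of Fano type} to produce $\Omega$ on $X$, push forward to $\Omega'=\mu_*\Omega$, verify the conditions of that proposition for $(Y,\Delta',\Omega')$, and conclude. For the $\Q$-linear equivalence and the klt condition your argument is sound, and in fact a mild variant of the paper's: you transport an explicit representative $\pi_X^*M+\tfrac1n\mathrm{div}(f)$ through the codimension-one isomorphism and compare pullbacks on a common resolution, obtaining $p^*(K_X+\Delta+\Omega)=q^*(K_Y+\Delta'+\Omega')$ directly, whereas the paper reaches the same equality via the negativity lemma. Either route gives equality of discrepancies, hence klt.

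The gap is the bigness of $\Omega'$, which you correctly flag as the main obstacle but do not actually prove. ``Replace $A$ by a general effective $A_1\sim_{\Q,B}A$ before pushing forward'' and ``exploit the freedom within the $\sim_{\Q,B}$ class to arrange that $\Omega'-H$ be effective'' are not an argument: $\mu_*A_1$ is still only a Weil divisor on $Y$ with no reason to be $\Q$-Cartier or to dominate any chosen $H$, and for a fixed $\Omega'$ there is in general no small ample $H$ with $\Omega'-H\ge 0$. The mechanism that actually works, and the one the paper uses, runs in the opposite direction: fix a $\pi_Y$-ample Cartier divisor $H$ on $Y$, form its strict transform $\mu^{-1}_*H$ on $X$ (a genuine effective Weil divisor because $\mu^{-1}$ extracts no divisors), and then use the $\pi_X$-ampleness of $A$ to choose an effective $A'\sim_{\Q,B}A$ with $mA'\geq \mu^{-1}_*H$ for some $m>0$. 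Since $\mu$ is a birational contraction, $\mu_*\mu^{-1}_*H=H$, so $\Omega'\sim_{\Q,B}\mu_*(A'+D)\geq \tfrac1m H$, which is the required decomposition into relatively ample plus effective. Without spelling out this comparison on $X$ against the strict transform of $H$, your proof of bigness is incomplete.
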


\begin{proof}
By Proposition \ref{first property of Fano type}, there exists an effective $\pi_X$-big $\Q$-Weil divisor $\Omega$ on $X$ such that $(X,\Delta+\Omega)$ is klt and $K_X+\Delta+\Omega \sim_{\Q,B} 0$.
Let $\Omega' =\mu_*\Omega$, then $\Omega'$ is $\pi_Y$-big $\Q$-Weil divisor.
Indeed, since $\Omega$ is $\pi_X$-big, we have $\Omega = A+D$, where $A$ is an $\pi_X$-ample $\Q$-Cartier divisor and $D$ is an effective $\Q$-Weil divisor.
Taking an $\pi_Y$-ample Cartier divisor $H$ on $Y$, there exists an effective divisor $A' \sim_{\Q,B} A$ such that $mA' \geq \mu^{-1}_*H$ for some positive integer $m$.
Hence, we have $\Omega' \sim_{\Q,B} \mu_*(A'+D) \geq \frac{1}{m}H $, and in particular, $\Omega'$ is $\pi_Y$-big.
Furthermore, $K_Y+\Delta'+\Omega'$ is $\Q$-Cartier and $K_Y+\Delta'+\Omega' \sim_{\Q,B}0$.
By the negativity lemma, $(Y,\Delta'+\Omega')$ is klt.
\end{proof}

The reader is referred to \cite[Lemma-Definition 2.6]{prokhorov-shokurov} for more details. 

\subsection{Int-amplified endomorphism}
Meng and Zhang established minimal model program equivariant with respect to endomorphisms in \cite{meng-zhang2}, for varieties admitting an int-amplified endomorphism.
We summarize their results that we need later.

\begin{defn}\label{def:intamp}
A surjective endomorphism $f \colon X \to X$ of normal projective variety $X$ is called \emph{int-amplified} if 
there exists an ample Cartier divisor $H$ on $X$ such that $f^{*}H-H$ is ample.
\end{defn}

We collect basic properties of int-amplified endomorphisms in the following lemma.

\begin{prop}\label{first properties of int-ampl endo}\ 
\begin{enumerate}
\item Let $X$ be a normal projective variety, $f \colon X \to X$ a surjective morphism, and $n>0$ a positive integer.
Then, $f$ is int-amplified if and only if so is $f^{n}$.

\item Let $\pi \colon X \to Y$ be a surjective morphism between normal projective varieties.
Let $f \colon X \to X$, $g \colon Y \to Y$ be surjective endomorphisms such that $\pi \circ f=g \circ \pi$.
If $f$ is int-amplified, then so is $g$. 

\item  Let $\pi \colon X \dashrightarrow Y$ be a dominant rational map between normal projective varieties of same dimension.
Let $f \colon X \to X$, $g \colon Y \to Y$ be surjective endomorphisms such that $\pi \circ f=g \circ \pi$.
Then $f$ is int-amplified if and only if so is $g$.

\item Let $f \colon X \to X$ be an int-amplified endomorphism of a normal projective variety and $D$ a $\Q$-Cartier divisor on $X$.
If $f^*D-D$ is numerically equivalent to an effective $\Q$-Weil divisor, then $D$ is also numerically equivalent to an effective $\Q$-Weil divisor.
In particular, if $X$ is $\Q$-Gorenstein, then $-K_X$ is numerically equivalent to an effective $\Q$-Weil divisor.

\end{enumerate}
\end{prop}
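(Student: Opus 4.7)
For part (1), the plan is to use the telescoping identity
\[
(f^n)^*H - H = \sum_{i=0}^{n-1}(f^i)^*(f^*H - H).
\]
Since pullback of an ample divisor by a finite surjective morphism remains ample, when $f^*H - H$ is ample every summand on the right is ample, yielding $(f^n)^*H - H$ ample. Conversely, if $(f^n)^*H - H$ is ample for some ample $H$, I would set $H' := \sum_{i=0}^{n-1}(f^i)^*H$; this class is ample, and the same identity gives $f^*H' - H' = (f^n)^*H - H$, which is ample, so $f$ itself is int-amplified.

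For parts (2) and (3) I would invoke the spectral characterization established in \cite{meng17}: $f$ is int-amplified if and only if every eigenvalue of $f^*$ acting on $N^1(X)_{\mathbb{R}}$ has modulus strictly greater than one. In (2), surjectivity of $\pi$ makes $\pi^*\colon N^1(Y)_{\mathbb{R}} \hookrightarrow N^1(X)_{\mathbb{R}}$ injective, and the equivariance $\pi \circ f = g \circ \pi$ gives $\pi^* g^* = f^* \pi^*$, so the spectrum of $g^*$ embeds into that of $f^*$ restricted to the invariant subspace $\pi^*(N^1(Y)_{\mathbb{R}})$ and hence also lies outside the closed unit disk. For (3), since $\pi$ is a generically finite dominant rational map, I would resolve its indeterminacy by a common normal model $W$ dominating both $X$ and $Y$; after replacing $f$ and $g$ by compatible iterates so that the induced self-maps lift to $W$, one can apply (2) in both directions together with (1) to conclude.

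For (4), the same characterization shows that $f^* - \mathrm{id}$ is invertible on $N^1(X)_{\mathbb{R}}$, with the spectral radius of its inverse strictly less than one. The relation $f_* f^* = (\deg f)\cdot \mathrm{id}$ gives $(f^*)^{-1} = (\deg f)^{-1} f_*$, and since pushforward by a finite surjective morphism between normal varieties preserves effectivity, each $(f^*)^{-k}[E]$ is represented by an effective $\mathbb{Q}$-Weil divisor. The Neumann expansion
\[
[D] = (f^* - \mathrm{id})^{-1}[E] = \sum_{k \geq 1}(f^*)^{-k}[E]
\]
then writes $[D]$ as a limit of effective $\mathbb{Q}$-classes, yielding the desired conclusion. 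The final ``in particular'' claim follows by applying the main statement with $D = -K_X$ and invoking the ramification formula $f^*(-K_X) - (-K_X) = R_f \geq 0$.

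The principal obstacle I foresee lies in part (4): the Neumann argument above immediately puts $[D]$ in the pseudo-effective cone, but the stated conclusion requires $[D]$ to be numerically equivalent to an actual effective $\mathbb{Q}$-Weil divisor, which is a strictly stronger condition in general. I would bridge this gap via the finite identity
\[
[D] = (f^*)^{-N}[D] + \sum_{k=1}^{N}(f^*)^{-k}[E],
\]
whose second summand is the class of a concrete effective $\mathbb{Q}$-Weil divisor $G_N$; since $(f^*)^{-N}[D] \to 0$ in $N^1(X)_{\mathbb{R}}$, combining this with a Kodaira-type argument that absorbs arbitrarily small perturbations into ample classes should furnish the required effective $\mathbb{Q}$-representative of $[D]$.
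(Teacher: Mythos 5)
The paper does not give its own proof of this proposition; it is cited wholesale from \cite{meng17} (Theorems 3.3, 1.5 and Lemmas 3.4, 3.5), so there is no in-paper argument to compare against. Your sketch reconstructs some of Meng's reasoning. Parts (1) and (2) are correct: (1) is the standard telescoping argument, and (2) correctly invokes the spectral characterization from \cite{meng17} (all eigenvalues of $f^*|_{N^1(X)_{\R}}$ have modulus strictly greater than $1$).

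There are two genuine gaps. In part (3), ``applying (2) in both directions'' on the normalized graph closure $W$ is not enough, because (2) is one-directional: it gives that if the lift $h$ on $W$ is int-amplified then so are $f$ and $g$, but it does not let you pass from $f$ (or $g$) up to $h$. To go up the diagram you need a separate argument for a generically finite surjective morphism $p\colon W\to X$ (resp.\ $q\colon W\to Y$): take a big $\R$-Cartier divisor $B$ with $f^*B-B$ big (here you need the ``big'' equivalent characterization from \cite{meng17}, not just Definition~\ref{def:intamp}), pull it back by $p$, and observe that $p^*B$ is big and $h^*p^*B-p^*B=p^*(f^*B-B)$ is big, so $h$ is int-amplified; then apply (2). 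Incidentally, no iterates are needed to obtain the lift $h$: the restriction of $f\times g$ to the graph closure already gives it. In part (4), the Neumann expansion correctly shows that $[D]$ is an increasing limit of effective classes, hence pseudo-effective, and you rightly flag that this is strictly weaker than ``numerically equivalent to an effective $\Q$-Weil divisor.'' Your proposed fix does not close the gap: in $[D]=[G_N]+(f^*)^{-N}[D]$ the error term is small but has no sign, and ``effective plus an arbitrary small class'' is not effective in general; Kodaira-type absorption only works when one is perturbing a class in the interior of a cone (e.g.\ ample or big), not a general effective class on the boundary. So as written your argument proves pseudo-effectivity only, and the stronger literal statement needs the argument in \cite{meng17}. (That said, the way the paper actually uses (4), e.g.\ in Remark~\ref{steps of MMP of canonical bundle formula type preserves f-pair}, is precisely to deduce pseudo-effectivity, so your Neumann argument would suffice for the paper's applications.)
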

\begin{proof}
See \cite[Theorem 3.3, Lemma 3,4, 3.5, Theorem 1.5]{meng17}.
\end{proof}

\begin{thm}[Meng-Zhang]\label{equivariant extremal ray contraction}
Let $X$ be a $\Q$-factorial normal projective variety admitting an int-amplified endomorphism.
Let $ \Delta$ be an effective $\Q$-Weil divisor on $X$ such that $(X, \Delta)$ is klt.
\begin{enumerate}
\item\label{fin-st} There are only finitely many $(K_{X}+ \Delta)$-negative extremal rays of $ \overline{NE}(X)$.
Moreover, let $f \colon X \to X$ be a surjective endomorphism of $X$.
Then every $(K_{X}+ \Delta)$-negative extremal ray is fixed by the linear map $(f^{n})_{*}$ for some $n>0$.
\item\label{end-induced}  Let $f \colon X \to X$ be a surjective endomorphism of $X$.
Let $R$ be a $(K_{X}+ \Delta)$-negative extremal ray and $\pi \colon X \to Y$ its contraction.
Suppose $f_{*}(R)=R$.
Then,
\begin{enumerate}
\item \label{induced-on-contr} $f$ induces an endomorphism $g \colon Y \to Y$ such that $g\circ \pi=\pi \circ f$;
\item \label{induced-on-flip}if $\pi$ is a flipping contraction and $X^{+}$ is the flip, the induced rational self-map $h \colon X^{+} \dashrightarrow X^{+}$
is a morphism.
\end{enumerate}
\item  In particular, for any finite sequence of $(K_{X}+ \Delta)$-MMP and for any surjective endomorphism $f \colon X \to X$,
there exists a positive integer $n>0$ such that the sequence of MMPs is equivariant under $f^{n}$.
\end{enumerate}
\end{thm}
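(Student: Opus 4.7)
The plan is to show that, after replacing $\phi$ by a sufficiently high iterate, every step in the construction of $\widetilde{X} \to X$ and $A$ from the proof of Theorem~\ref{main thm} becomes $\phi$-equivariant, so that $\phi^m$ lifts canonically. Recall that in that construction there is a rational map $\pi \colon X \dashrightarrow W$ built from a finite tower of MMP steps and Mori fiber spaces of certain klt pairs, $W$ is covered by an abelian variety via a finite morphism $g \colon A \to W$, and $\widetilde{X}$ arises as (a component of the normalization of) the fiber product $X \times_W A$. My first step is to make $\pi$ equivariant: since $X$ admits an int-amplified endomorphism, Theorem~\ref{equivariant extremal ray contraction} applies to the arbitrary surjective endomorphism $\phi$, so each of the $(K+\Delta)$-negative extremal rays used in the tower is fixed by $(\phi^{m_1})_{*}$ for some $m_1 > 0$, and the corresponding divisorial contractions, flips, and Mori fiber spaces are $\phi^{m_1}$-equivariant. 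Consequently $\phi^{m_1}$ descends along $\pi$ to a surjective endomorphism $\phi_W \colon W \to W$.

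Next, I would lift an iterate of $\phi_W$ across the abelian variety cover $g$. For each $n$, the pullback cover $(\phi_W^n)^{*} g \colon A_n \to W$ has the same degree as $g$. Using finiteness of quasi-\'etale covers of $W$ of any fixed degree---a consequence of the structure established in the construction of $W$, namely that $K_W+\Delta_W\equiv 0$ and $W$ admits a finite cover by the abelian variety $A$, so the relevant quotient of the \'etale-in-codimension-one fundamental group has only finitely many subgroups of a given index---the isomorphism classes $A_n$ must cycle. Pigeonhole produces $n > 0$ with $A_n \cong A$ over $W$, and such an isomorphism provides a surjective finite morphism $\phi_A \colon A \to A$ satisfying $g \circ \phi_A = \phi_W^n \circ g$. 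Setting $m := n m_1$, we have compatible endomorphisms $\phi^m$, $\phi_W^n$, and $\phi_A$ over $X$, $W$, and $A$ respectively.

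Finally, the compatible pair $(\phi^m,\phi_A)$ lying over $\phi_W^n$ induces an endomorphism on the fiber product $X \times_W A$, which passes to the normalization and preserves the unique component dominating both $X$ and $A$, yielding $\widetilde{\phi} \colon \widetilde{X} \to \widetilde{X}$ with $\mu \circ \widetilde{\phi} = \phi^m \circ \mu$. Functoriality of the albanese then identifies $\phi_A$ as the endomorphism of $A$ induced by $\widetilde{\phi}$ via $\mathrm{alb}_{\widetilde{X}}$, completing the commutative diagram. The main obstacle is the middle step: since $\phi$ is only assumed surjective and not int-amplified, dynamical arguments involving ample polarizations are unavailable, and the lift to $A$ must come from a finiteness/pigeonhole argument on iterates of $\phi_W$ acting on the set of quasi-\'etale covers of $W$ of bounded degree, which relies crucially on the specific structure of $W$ furnished by the construction of Theorem~\ref{main thm}.
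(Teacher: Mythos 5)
Your proposal does not prove the statement in question. The statement is Theorem~\ref{equivariant extremal ray contraction} (the Meng--Zhang result): finiteness of the $(K_X+\Delta)$-negative extremal rays of $\overline{NE}(X)$, the fact that each such ray is fixed by $(f^n)_*$ for some $n>0$, that a fixed ray's contraction $\pi \colon X \to Y$ descends $f$ to an endomorphism $g$ of $Y$, that the induced self-map of the flip $X^+$ is a morphism, and consequently that any finite sequence of $(K_X+\Delta)$-MMP becomes equivariant under some iterate $f^n$. What you have written instead is a proof sketch of the lifting statement (Theorem~\ref{main thm; lifting}, i.e.\ the ``Moreover'' part of Theorem~\ref{main thm'}): you construct $W$, the abelian cover $A \to W$, and a lift $\widetilde{\phi}$ of $\phi^m$ to $\widetilde{X}$. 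None of the assertions actually required --- finiteness of negative extremal rays in the presence of an int-amplified endomorphism, fixedness of rays under iterates, descent of $f$ along the contraction, or regularity of the induced map on the flip --- is addressed anywhere in your argument.

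Worse, your argument is circular relative to the assigned statement: in your very first step you write ``Theorem~\ref{equivariant extremal ray contraction} applies to the arbitrary surjective endomorphism $\phi$, so each of the $(K+\Delta)$-negative extremal rays used in the tower is fixed by $(\phi^{m_1})_*$\dots,'' i.e.\ you invoke precisely the theorem you were supposed to prove. For comparison, the paper disposes of this theorem by citation and a short observation: part (1) is a special case of Meng--Zhang's equivariant MMP results (\cite[Theorem 4.6]{meng-zhang2}), part (2a) holds because the contraction morphism is determined by the ray $R$ (so $f_*(R)=R$ forces $\pi\circ f$ to contract exactly the curves in $R$, whence it factors through $\pi$), and part (2b) is \cite[Lemma 6.6]{meng-zhang}; part (3) follows by iterating (1) and (2) along the finitely many steps of the MMP. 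To repair your submission you would either need to reproduce or cite those arguments for the actual statement, or recognize that your text belongs to the proof of Theorem~\ref{main thm'} rather than here.
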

\begin{proof}
(\ref{fin-st}) is a special case of  \cite[Theorem 4.6]{meng-zhang2}.
(\ref{induced-on-contr}) is true since the contraction is determined by the ray $R$.
(\ref{induced-on-flip}) follows from \cite[Lemma 6.6]{meng-zhang}.
\end{proof}

\begin{thm}[Equivariant MMP (Meng-Zhang)]\label{equiMMP}
Let $X$ be a $\Q$-factorial klt projective variety admitting an int-amplified endomorphism.
Then for any surjective endomorphism $f \colon X \to X$, there exists a positive integer $n>0$ and 
a sequence of rational maps
\begin{align*}
X=X_{0} \dashrightarrow X_{1} \dashrightarrow \cdots \dashrightarrow X_{r}
\end{align*}
such that
\begin{enumerate}
\item $X_{i} \dashrightarrow X_{i+1}$ is either a divisorial contraction, flip, or Mori fiber space of a 
$K_{X_{i}}$-negative extremal ray,
\item there exist surjective endomorphisms $g_{i} \colon X_{i} \to X_{i}$ for $i=0, \dots ,r$
such that $g_{0}=f^{n}$ and the following diagram commutes
\[
\xymatrix{
X_{i} \ar@{-->}[r] \ar[d]_{g_{i}} & X_{i+1} \ar[d]^{g_{i+1}}\\
X_{i} \ar@{-->}[r] & X_{i+1},
}
\]
\item $X_{r}$ is a $Q$-abelian variety (that is, there exists a quasi-\'etale finite surjective morphism $A \to X_{r}$ from an abelian variety $A$,
note that $X_{r}$ might be a point).
In this case, there exists a quasi-\'etale finite surjective morphism $A \to X_{r}$ from an abelian variety $A$
and an surjective endomorphism $h \colon A \to A$ such that the diagram
\[
\xymatrix{
A \ar[r]^{h} \ar[d] & A \ar[d]\\
X_{r} \ar[r]_{g_{r}} & X_{r} 
}
\]
commutes.
\end{enumerate}
\end{thm}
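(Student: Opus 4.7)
The plan is to run a $K_X$-MMP and invoke Theorem \ref{equivariant extremal ray contraction} at each step to upgrade it to be $f^n$-equivariant for a suitably chosen $n$, and then to show that the terminal model must be $Q$-abelian.

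I would proceed by induction. Suppose $X_i$ has been constructed, is $\Q$-factorial klt, and carries a surjective endomorphism $g_i$ (induced from some iterate of $f$) which is int-amplified by Proposition \ref{first properties of int-ampl endo}(2),(3). If $K_{X_i}$ is nef the process stops. Otherwise, Theorem \ref{equivariant extremal ray contraction}(1) yields only finitely many $K_{X_i}$-negative extremal rays, each fixed by some iterate of $(g_i)_*$, so after replacing $g_i$ by an iterate we may assume $(g_i)_*R_i=R_i$ for a chosen $K_{X_i}$-negative extremal ray $R_i$. Part (2) of that theorem then promotes the divisorial contraction, flip, or Mori fiber space of $R_i$ to an equivariant diagram producing $X_{i+1}$ with an induced endomorphism $g_{i+1}$. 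Standard MMP preservation shows $X_{i+1}$ is again $\Q$-factorial klt, and Proposition \ref{first properties of int-ampl endo}(2),(3) keeps $g_{i+1}$ int-amplified. Collecting the finitely many iterates needed at each step into a single exponent $n$ makes $f^n$ equivariant with the entire chain.

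Termination is the next item to address. Divisorial contractions strictly decrease the Picard number, Mori fiber spaces strictly drop the dimension, and termination of the flips occurring in this particular equivariant MMP is known in our setting by the results of \cite{meng-zhang2}. At the terminal model $X_r$ there is no $K_{X_r}$-negative extremal ray, so $K_{X_r}$ is nef. On the other hand $X_r$ still admits an int-amplified endomorphism, so by Proposition \ref{first properties of int-ampl endo}(4) the divisor $-K_{X_r}$ is numerically equivalent to an effective $\Q$-Weil divisor. A nef divisor whose negative is pseudo-effective is numerically trivial, hence $K_{X_r}\equiv 0$.

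The main obstacle is the final identification: proving that a $\Q$-factorial klt projective variety with $K\equiv 0$ admitting an int-amplified endomorphism is $Q$-abelian, and that (some iterate of) the endomorphism lifts to a compatible self-map of the abelian cover. This is the deep input, relying on the Beauville--Bogomolov-type decomposition for klt varieties with numerically trivial canonical class combined with the structural observation that a strict Calabi--Yau or irreducible symplectic factor cannot carry an int-amplified self-map; thus only the abelian factor survives, yielding a quasi-\'etale cover $A\to X_r$ by an abelian variety. The endomorphism $h\colon A\to A$ is then constructed by lifting an iterate of $g_r$ through the cover, where the iterate is chosen to absorb the finite Galois ambiguity of $A\to X_r$ so that the lift exists uniquely and makes the required square commute.
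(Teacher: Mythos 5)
The paper does not prove this statement at all: its proof is a one-line citation, ``This is a part of \cite[Theorem 1.2]{meng-zhang2}.'' Your proposal is therefore attempting something the paper never does, namely to reproduce Meng--Zhang's argument, so the comparison is between your sketch and the cited source rather than with anything in this paper.

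Your outline follows the broad shape of Meng--Zhang's proof (run an equivariant $K_X$-MMP via Theorem~\ref{equivariant extremal ray contraction}, descend through Mori fiber spaces by induction on dimension, observe at the end that $K_{X_r}$ is nef while $-K_{X_r}$ is pseudo-effective by Proposition~\ref{first properties of int-ampl endo}(4), hence $K_{X_r}\equiv 0$, then identify $X_r$ as $Q$-abelian). Two places are genuine gaps rather than routine bookkeeping. First, ``termination of the flips\dots is known in our setting by the results of \cite{meng-zhang2}'' is circular: Theorem~\ref{equiMMP} \emph{is} a piece of \cite[Theorem 1.2]{meng-zhang2}, and the equivariant termination there is itself a nontrivial point (Meng--Zhang reduce to a relative MMP over a $Q$-abelian base of smaller dimension, where termination is available; one cannot simply invoke the theorem being proved). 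Second, the identification of the terminal model as $Q$-abelian is not merely a black-box application of the singular Beauville--Bogomolov decomposition plus ``CY/symplectic factors carry no int-amplified endomorphism''; one first has to show that the int-amplified endomorphism of $X_r$ is quasi-\'etale (which follows from $R_{g_r}\geq 0$ and $R_{g_r}\equiv K_{X_r}-g_r^*K_{X_r}\equiv 0$, hence $R_{g_r}=0$), pass to the global index-one cover to arrange $K\sim 0$ with canonical singularities, lift the endomorphism there, and then apply the structure theory for such endomorphisms (Nakayama--Zhang/Greb--Kebekus--Peternell-type results); the lifting of $g_r$ to $A$ asserted in part~(3) also needs the argument (as in the paper's Remark~2.8 / \cite[Lemma 8.1]{cascini-meng-zhang}) that endomorphisms of $Q$-abelian varieties lift to a suitable quasi-\'etale abelian cover, which your last sentence gestures at but does not supply. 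None of this is wrong in spirit, but as written it leans on the conclusion it is trying to reach; the paper avoids all of it by citing.
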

\begin{proof}
This is a part of \cite[Theorem 1.2]{meng-zhang2}.
\end{proof}

\begin{rmk}
Surjective endomorphisms of a Q-abelian variety always lift to a certain quasi-\'etale cover by an abelian variety.
See \cite[Lemma 8.1 and Corollary 8.2]{cascini-meng-zhang}, for example. The proof works over any algebraically closed field.
\end{rmk}

\section{Pairs with respect to an endomorphism}
In this section, we study equivariant Mori fiber spaces.
First we introduce the notion of the pair with reapect to surjective endomorphisms.
In Proposition 3.8, we construct such a pair on base varieties of equivariant Mori fiber spaces.

\begin{defn}\label{definition of f-pair}
Let $f \colon X \to X$ be a surjective endomorphism of a normal variety $X$.
Then $(X,\Delta)$ is called an {\em $f$-pair} if
\begin{enumerate}
    \item $\Delta$ is an effective $\Q$-Weil divisor, and
    \item $R_{\Delta}: = R_f+\Delta-f^*\Delta \geq 0$.
\end{enumerate}
\end{defn}

\begin{rmk}
Let $f \colon X \to X$ be a surjective endomorphism of a normal variety $X$.
\begin{itemize}
    \item $(X,0)$ is an $f$-pair.
    \item If $(X,\Delta)$ is an $f$-pair, then
\[
R_{\Delta}=R_f+\Delta-f^*\Delta \sim K_X+\Delta-f^*(K_X+\Delta).
\]
Furthermore, for every positive integer $m$, we have
\[
R_{\Delta,m+1} :=R_{f^{m+1}}+\Delta-(f^{m+1})^*\Delta=R_\Delta+f^*R_{\Delta,m}.
\]
In particular, $(X,\Delta)$ is an $f^m$-pair for all $m$.
\end{itemize}
\end{rmk}

\begin{eg}\label{example; f-pair}
Let $E$ be an elliptic curve and $[m]$ a multiplication by $m$ for all integers $m$.
Since $[m]$ is $[-1]$-equivariant, we obtain the following commutative diagram
\[
\xymatrix{
E \ar[r]^\mu \ar[d]_{[m]} & \P^1 \ar[d]^h \\
E  \ar[r]^\mu & \P^1,
}
\]
where $\mu$ is the quotient map by $[-1]$ and $h$ is the endomorphism induced by $[m]$.
Let $Q_1, \ldots , Q_4$ be the $2$-torsion points on $E$ and $P_1 =\mu(Q_1), \ldots , P_4=\mu(Q_4)$.
Let 
\[
\Delta=\frac{1}{2}(P_1+P_2+P_3+P_4),
\]
then $(\P^1,\Delta)$ is an $h$-pair (see Example \ref{example; mfs of cbf} and Example \ref{example; quasi-etale cover}).
\end{eg}

\begin{prop}\label{first property of f-pairs}
Let $f \colon X \to X$ be an int-amplified endomorphism of a normal projective variety $X$ and $(X,\Delta)$ an $f$-pair such that $K_X+\Delta$ is $\Q$-Cartier.
Then $-(K_X+\Delta)$ is numerically equivalent to an effective $\Q$-Weil divisor.

\end{prop}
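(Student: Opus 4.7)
The plan is to reduce this to Proposition 2.5(4) by exhibiting an effective $\Q$-divisor that is $\Q$-linearly equivalent to $f^*D - D$, where $D := -(K_X + \Delta)$.

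First I would recall the ramification formula for a finite surjective morphism of normal varieties in characteristic zero, namely $K_X \sim f^*K_X + R_f$. Combining this with the definition of $R_\Delta$, I obtain
\[
R_\Delta \;=\; R_f + \Delta - f^*\Delta \;\sim\; (K_X - f^*K_X) + \Delta - f^*\Delta \;=\; (K_X+\Delta) - f^*(K_X+\Delta).
\]
Setting $D := -(K_X+\Delta)$, this rewrites as $f^*D - D \sim R_\Delta$, and since $(X,\Delta)$ is an $f$-pair, $R_\Delta$ is effective by definition.

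Next, since $f$ is int-amplified and $D$ is $\Q$-Cartier (because $K_X + \Delta$ is $\Q$-Cartier), Proposition 2.5(4) applies: the relation $f^*D - D \equiv R_\Delta \geq 0$ forces $D = -(K_X+\Delta)$ itself to be numerically equivalent to an effective $\Q$-Weil divisor, which is exactly the conclusion.

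There is no real obstacle here; the only non-trivial input is Proposition 2.5(4) (that the operator $f^* - \id$ on the N\'eron--Severi group preserves the pseudo-effective cone's complement in a suitable sense for int-amplified $f$), which is cited from \cite{meng17}. The content of the argument is simply the observation that the effectivity of $R_\Delta$ is tautologically the statement that $f^*$ increases $-(K_X+\Delta)$ modulo linear equivalence.
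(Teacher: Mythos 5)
Your proof is correct and follows exactly the route the paper uses: rewrite $R_\Delta \sim (K_X+\Delta) - f^*(K_X+\Delta)$ via the ramification formula, observe that $R_\Delta \geq 0$ is the defining property of an $f$-pair, and invoke Proposition \ref{first properties of int-ampl endo}(4). You have simply spelled out the algebra that the paper leaves implicit.
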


\begin{proof}
It follows from Proposition \ref{first properties of int-ampl endo} and
\[
0 \leq R_{\Delta} \sim K_X+\Delta-f^*(K_X+\Delta) 
\]
\end{proof}

\begin{rmk}
Let $f \colon X \to X$ be an int-amplified endomorphism of a normal projective variety $X$.
Then an $f$-pair $(X,\Delta)$ is valuative log canonical defined in \cite{yoshikawa18}, and the proof is similar to the proof of \cite[Theorem 1.4]{yoshikawa18}.
In particular, if $K_X+\Delta$ is $\Q$-Cartier, then $(X,\Delta)$ is log canonical.
\end{rmk}

In order to introduce very important $f$-pairs, we define the following fiber spaces of pairs.

\begin{defn}\label{definition of mfs of cbf type}
A morphism $\pi \colon (X,\Delta) \to (Y,\Gamma)$ of pairs is called a {\em Mori fiber space of canonical bundle formula type} if
\begin{enumerate}
    \item $X$ is a normal $\Q$-factorial projective  variety and $\Delta$ is an effective $\Q$-Weil divisor on $X$ such that $(X,\Delta)$ is klt,
    \item $\pi \colon X \to Y$ is a $(K_X+\Delta)$-Mori fiber space, and
    \item for any prime divisor $E$ on $Y$, $\Gamma$ satisfies
    \[
    \ord_{E}(\Gamma)=\frac{m_E-1+\ord_{F}(\Delta)}{m_E},
    \]
    where $F$ is a prime divisor on $X$ satisfying $\pi^*E=m_E F$ with  positive integer $m_E$.
\end{enumerate}
\end{defn}

\begin{rmk} \ 
\begin{itemize}
    \item If $Y$ is a point, the third condition is always satisfied.
    \item Since $\pi$ is a $(K_X+\Delta)$-Mori fiber space, we can take $m_E$ and $F$ as in Definition \ref{definition of mfs of cbf type} (see \cite[Lemma 4.10]{yoshikawa19}).
\end{itemize}
\end{rmk}

\begin{prop}\label{pairs of cbf type are klt}
Let $\pi \colon (X,\Delta) \to (Y,\Gamma)$ be a Mori fiber space of canonical bundle formula type.
Then $(Y,\Gamma)$ is klt.
\end{prop}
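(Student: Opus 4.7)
The plan is to recognize $\Gamma$ as the discriminant part in the canonical bundle formula applied to a suitable trivialization of $\pi$, and then deduce kltness of $(Y,\Gamma)$ from Ambro's theorem. As preliminaries, since $(X,\Delta)$ is klt we have $\ord_F(\Delta)<1$ for every prime divisor $F$, so each coefficient $\frac{m_E-1+\ord_F(\Delta)}{m_E}$ of $\Gamma$ lies in $[0,1)$; in particular $\Gamma$ is an effective boundary. I also observe that $Y$ is automatically $\Q$-factorial: for any Weil divisor $D$ on $Y$, its strict transform $\widetilde{D}$ on $X$ is $\Q$-Cartier (since $X$ is $\Q$-factorial) and intersects trivially with curves in fibers of $\pi$, hence is $\pi$-numerically trivial; by relative Picard rank one it is $\Q$-linearly equivalent to $\pi^*D''$ for some $\Q$-Cartier $D''$ on $Y$, and $D=\pi_*\widetilde{D}=D''$. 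Consequently $K_Y+\Gamma$ is $\Q$-Cartier.

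The central step is to trivialize the fibration. Since $-(K_X+\Delta)$ is $\pi$-ample, I choose an ample $\Q$-Cartier divisor $M$ on $Y$ with $-(K_X+\Delta)+\pi^*M$ ample on $X$, pick a general member $D$ of a very ample linear system $|{-n(K_X+\Delta)+n\pi^*M}|$ for suitable $n\gg 0$, and set $\Delta^+:=\frac{1}{n}D$. Then $\Delta^+$ is an effective $\Q$-divisor with $\Delta^+\sim_{\Q,Y}-(K_X+\Delta)$; Bertini for klt pairs makes $(X,\Delta+\Delta^+)$ klt, and generality makes $\Delta^+$ $\pi$-horizontal, so $\ord_F(\Delta^+)=0$ for every vertical prime $F$. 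Now $(X,\Delta+\Delta^+)\to Y$ is a klt-trivial fibration, and the canonical bundle formula of Ambro (see also Fujino--Gongyo) yields a discriminant part $B_Y$ and an effective $\Q$-divisor $M'$ $\Q$-linearly equivalent to the moduli part such that
\[
K_X+\Delta+\Delta^+\sim_{\Q}\pi^*(K_Y+B_Y+M'),
\]
and $(Y,B_Y+M')$ is klt.

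A direct codimension-one calculation identifies $B_Y$ with $\Gamma$: since $\pi^*E=m_E F$ and $\ord_F(\Delta^+)=0$, the log canonical threshold of $\pi^*E$ against $(X,\Delta+\Delta^+)$ at the generic point of $E$ equals $\frac{1-\ord_F(\Delta)}{m_E}$, giving $\ord_E(B_Y)=\frac{m_E-1+\ord_F(\Delta)}{m_E}=\ord_E(\Gamma)$. Hence $(Y,\Gamma+M')$ is klt with $M'\geq 0$, which forces $(Y,\Gamma)$ to be klt since log discrepancies only increase upon removing an effective $\Q$-Cartier divisor from the boundary. The main technical obstacle is the existence of an effective moduli representative $M'$ keeping the pair klt; this rests on the $b$-nef and $b$-abundant nature of the moduli $b$-divisor of a klt-trivial fibration, which I would cite from Ambro's theorem.
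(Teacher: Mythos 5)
Your proof is essentially the same as the paper's: both add a general $\Q$-divisor $\Delta^+$ with $K_X+\Delta+\Delta^+\sim_{\Q,Y}0$ to obtain a klt-trivial fibration, invoke Ambro's canonical bundle formula to conclude the discriminant pair on $Y$ is klt, and then compare the discriminant with $\Gamma$ using the log canonical threshold computation. The observations that $Y$ is $\Q$-factorial and that the coefficients of $\Gamma$ lie in $[0,1)$ are correct and implicit in the paper.

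One caveat: you assert that the lct of $\pi^*E$ at $\eta_E$ \emph{equals} $\frac{1-\ord_F(\Delta)}{m_E}$, hence that the discriminant $B_Y$ equals $\Gamma$. That equality is not justified: the lct over $\eta_E$ is an infimum over \emph{all} divisors over $X$ whose center maps onto $E$, not just $F$, so generality and horizontality of $\Delta^+$ only yield $\mathrm{lct}_{\eta_E}\leq \frac{1-\ord_F(\Delta)}{m_E}$, i.e.\ $B_Y\geq\Gamma$ (which is exactly what the paper proves). This is harmless for the conclusion, since $(Y,B_Y+M')$ klt with $\Gamma\leq B_Y+M'$ and $K_Y+\Gamma$ $\Q$-Cartier already forces $(Y,\Gamma)$ klt, but the stated identification of the discriminant should be weakened to an inequality.
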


\begin{proof}
Since $-(K_X+\Delta)$ is $\pi$-ample, we can take an ample Cartier divisor $H$ on $Y$ such that $-(K_X+\Delta)+\pi^*H$ is ample $\Q$-Cartier.
There exists $0 \leq D\sim_{\Q} -(K_X+\Delta)+\pi^*H$ such that $(X,\Delta+D)$ is klt.
By the Ambro's canonical bundle formula  \cite[Theorem 4.1]{ambro05},
$(Y,B)$ is klt, where $B$ is an effective $\Q$-Weil divisor satisfying 
\[
\ord_{E}(B)=1-\mathrm{lct}_{\eta_E}(X,\Delta+D ; \pi^*E)
\]
for any prime divisor $E$ with generic point $\eta_E$.
Let $\pi^*E=m_E F$ for some positive integer $m_E$ and prime divisor $F$, then we have
\[
\mathrm{lct}_{\eta_E}(X,\Delta+D;\pi^*E) \leq \frac{1-\ord_{F}(\Delta+D)}{m_E} \leq \frac{1-\ord_{F}(\Delta)}{m_E}.
\]
In particular, we have
\[
\ord_{E}(B) \geq \frac{m_E-1+\ord_F(\Delta)}{m_E} = \ord_E(\Gamma).
\]
It means that $B \geq \Gamma$, so $(Y,\Gamma)$ is also klt.
\end{proof}

The following prposition gives a very important pairs.

\begin{prop}\label{mfs of cbf type for f-pairs}
We consider the following commutative diagram
\[
\xymatrix{
(X,\Delta) \ar[r]^{\pi} \ar[d]_{f} & (Y,\Gamma) \ar[d]^{g} \\
(X,\Delta) \ar[r]_{\pi}  & (Y,\Gamma), 
}
\]
where $\pi$ is a Mori fiber space of canonical bundle formula type, $f$, $g$ are surjective endomorphisms and $(X,\Delta)$ is an $f$-pair.
Then
\begin{itemize}
    \item $(Y,\Gamma)$ is a $g$-pair, and
    \item $R_{\Delta}-\pi^*R_{\Gamma}$ is effective and has no vertical components of $\pi$, that is, for every prime divisor $F$ on $X$ with $\pi(F)\neq Y$, we have $\ord_F(R_{\Delta}-\pi^*R_{\Gamma})=0$.
\end{itemize}
\end{prop}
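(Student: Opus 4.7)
The plan is to verify both statements divisor-by-divisor, exploiting the commutativity $\pi\circ f = g\circ\pi$ to compare the ramification data of $f$, $g$, $\pi$ with the divisorial formulas defining $\Gamma$.

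First I would fix a prime divisor $E$ on $Y$ with $\pi^{*}E = m_E F$, and decompose $g^{*}E = \sum_j n_j E_j$ into primes. Each $E_j$ satisfies $\pi^{*}E_j = m_{E_j} F_j$ by the Mori-fiber-space-of-canonical-bundle-formula-type hypothesis. Equating $f^{*}\pi^{*}E = \pi^{*}g^{*}E$ and comparing coefficients shows that the prime divisors lying over $F$ under $f$ are exactly the $F_j$, with ramification indices $r_j := \ord_{F_j}(f^{*}F)$ satisfying the key identity
\[
m_E\, r_j \;=\; n_j\, m_{E_j}.
\]
Hence $\ord_{F_j}(R_f)=r_j-1$ and $\ord_{E_j}(R_g)=n_j-1$, and moreover $\ord_{F_j}(f^{*}\Delta)=r_j\,\ord_F(\Delta)$ since $F$ is the unique prime divisor of $X$ whose image under $f^{-1}$ contains $F_j$.

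Next I would substitute the defining formulas
\[
\ord_{E_j}(\Gamma)=\frac{m_{E_j}-1+\ord_{F_j}(\Delta)}{m_{E_j}},\qquad
\ord_E(\Gamma)=\frac{m_E-1+\ord_F(\Delta)}{m_E}
\]
into $R_\Gamma = R_g+\Gamma-g^{*}\Gamma$. Using $m_E r_j = n_j m_{E_j}$ to cancel the leading cross terms, a direct simplification collapses the expression to
\[
\ord_{E_j}(R_\Gamma) \;=\; \frac{(r_j-1)+\ord_{F_j}(\Delta)-r_j\,\ord_F(\Delta)}{m_{E_j}} \;=\; \frac{\ord_{F_j}(R_\Delta)}{m_{E_j}},
\]
which is nonnegative because $(X,\Delta)$ is an $f$-pair. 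This yields the first assertion, that $(Y,\Gamma)$ is a $g$-pair. Multiplying through by $m_{E_j}$ gives $\ord_{F_j}(\pi^{*}R_\Gamma)=\ord_{F_j}(R_\Delta)$, so $R_\Delta-\pi^{*}R_\Gamma$ has coefficient zero along every vertical prime divisor $F_j$. For a horizontal prime divisor $F'$ of $X$, the image $\pi(F')=Y$ is not contained in the support of $R_\Gamma$, so $\ord_{F'}(\pi^{*}R_\Gamma)=0$ and $\ord_{F'}(R_\Delta-\pi^{*}R_\Gamma)=\ord_{F'}(R_\Delta)\geq 0$.

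I expect the main obstacle to be the divisorial bookkeeping rather than any deep input: namely, (i) justifying that the only vertical prime divisors on $X$ are the $F_j$ (which uses equidimensionality of the Mori fiber space, obtained from $\rho(X/Y)=1$ together with the ``$\pi^{*}E=m_E F$'' hypothesis of being of canonical bundle formula type), and (ii) carrying out the algebraic collapse of $\ord_{E_j}(R_\Gamma)$ to $\ord_{F_j}(R_\Delta)/m_{E_j}$, which looks unwieldy but hinges on the single identity $m_E r_j = n_j m_{E_j}$. Once these two are handled, both conclusions drop out of the same formula.
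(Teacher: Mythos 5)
Your proposal matches the paper's proof almost line for line: the same decomposition $g^{*}E = \sum a_i E_i$, $f^{*}F = \sum b_i F_i$ with $\pi^{*}E_i = m_{E_i}F_i$, the same key numerical identity $m_E b_i = a_i m_{E_i}$ extracted from $\pi^{*}g^{*}E = f^{*}\pi^{*}E$, and the same algebraic collapse to $\ord_{E_i}(R_\Gamma) = \ord_{F_i}(R_\Delta)/m_{E_i}$, with the first assertion deduced from that formula and the nonnegativity of $R_\Delta$. The only cosmetic difference is the paper's small logical shortcut of observing up front that the second assertion implies the first, but the computation is the same.
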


\begin{proof}
We take a prime divisor $E$ on Y, then we have $\pi^*E=m_E F$ for some positive integer $m_E$ and prime divisor $F$ on $X$.
If the second assertion holds, then
\[
m_E \ord_{E}(R_{\Gamma})=\ord_F(\pi^*R_{\Gamma})=\ord_F(R_{\Delta}) \geq 0,
\]
so the first assertion holds.
Therefore, it is enough to show the second assertion.
Let 
\[
g^*E=a_1E_1+ \cdots +a_rE_r,
\]
and
\[
f^*F=b_1F_1+ \cdots +b_rF_r,
\]
where all $a_i$, $b_i$ are positive integers and $E_i$, $F_i$ are prime divisors with $\pi^*E_i=m_{E_i}F_i$ for some positive integer $m_{E_i}$.
Since $\pi^*g^*E=f^*\pi^*E$, we have $m_E b_i=a_i m_{E_i}$ for all $i$.
Therefore, we have
\begin{eqnarray*}
    \ord_{E_i}(R_{\Gamma})&=&\ord_{E_i}(R_g+\Gamma-g^*\Gamma) \\
    &=& a_i-1+\frac{m_{E_i}-1+\ord_{F_i}(\Delta)}{m_{E_i}}-a_i \frac{m_E-1+\ord_F(\Delta)}{m_E} \\
    &=& \frac{-1+\ord_{F_i}(\Delta)+b_i-b_i\ord_F(\Delta)}{m_{E_i}} \\
    &=& \frac{\ord_{F_i}(R_\Delta)}{m_{E_i}},
\end{eqnarray*}
it means that $\ord_{F_i}(R_{\Delta}-\pi^*R_{\Gamma})=0$.
\end{proof}

\begin{eg}\label{example; mfs of cbf}
In \cite[Section 7]{matsuzawa-yoshikawa19-surface}, we give the following commutative diagram
\[
\xymatrix{
g \acts Y \ar[r]^-{\widetilde{\pi}} \ar[d]_{\widetilde{\mu}} & E \racts [m] \ar[d]^\mu \\
f \acts X \ar[r]_-\pi & \P^1 \racts h,
}
\]
where $Y$ is a ruled surface over $E$, $\widetilde{\mu}$ is quasi-\'etale, $\pi$ is a Mori fiber space, $E, \mu,$ and $h$ are in Example \ref{example; f-pair}.
Then $\pi \colon (X,0) \to (\P^1,\Delta)$ is a Mori fiber space of canonical bundle formula type, where $\Delta$ is in Example \ref{example; f-pair}.
In particular, $(\P^1,\Delta)$ is an $h$-pair by Proposition \ref{mfs of cbf type for f-pairs}.
\end{eg}

\section{Construction of the tower of Mori fiber spaces}
In this section, we study Fano type assuming the existence of int-amplified endomorphisms.
Corollary \ref{fano type of a tower of mfs} means that if the variety has an int-amplified endomorphism and on the tower of Mori fiber spaces of canonical bundle formula type, then it is of Fano type over the bottom variety.
Using this corollary, we replace a sequence of steps of MMP with a tower of Mori fiber spaces of canonical bundle formula type (see Theorem \ref{construction of a towers of mfs}).

\begin{lem}\label{ramification divisor is relative ample}
Let $X$ be a normal $\Q$-factorial projective variety and $\Delta$ an effective $\Q$-Weil divisor.
We consider the following commutative diagram
\[
\xymatrix{
X \ar[r]^{\pi} \ar[d]_{f} & Y \ar[d]^{g} \\
X \ar[r]_{\pi}  & Y, 
}
\]
where $\pi$ is a $(K_X+\Delta)$-negative extremal ray contraction, $f$, $g$ are int-amplified endomorphisms and $(X,\Delta)$ is an $f$-pair.
Then $R_{\Delta}$ is $\pi$-ample.
\end{lem}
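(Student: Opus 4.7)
The plan is to reduce $\pi$-ampleness of $R_\Delta$ to a single numerical inequality against a curve generating the contracted extremal ray, and then deduce that inequality from int-amplification of $f$ together with the $f$-pair identity.

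First I would record that, because $\pi$ is the contraction of a single extremal ray $R$, we have $\overline{NE}(X/Y) = \R_{\geq 0}\cdot [C]$ for any nonzero $C \in R$, so by the relative Kleiman criterion a $\Q$-Cartier divisor $D$ on $X$ is $\pi$-ample if and only if $D \cdot C > 0$. Note that $R_\Delta$ is $\Q$-Cartier since $X$ is $\Q$-factorial. Next, combining the $f$-pair identity with the ramification formula $K_X = f^* K_X + R_f$ gives
\[
R_\Delta = R_f + \Delta - f^*\Delta \sim (K_X+\Delta) - f^*(K_X+\Delta).
\]
Since the diagram in the hypothesis commutes, $f$ sends fibers of $\pi$ into fibers of $\pi$, hence $f_*$ preserves the extremal ray $R$; thus $f_* C = \lambda C$ in $N_1(X)$ for some real $\lambda > 0$ (positivity because $f$ is finite, so $f_*C \neq 0$).

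The key point is then to check $\lambda > 1$, which is where int-amplification enters. Fix an ample Cartier divisor $H$ with $f^* H - H$ ample; pairing with $C$ and applying the projection formula yields
\[
0 < (f^*H - H)\cdot C = H \cdot f_*C - H \cdot C = (\lambda - 1)\, H \cdot C,
\]
and since $H \cdot C > 0$ this forces $\lambda > 1$.

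Finally, intersecting the above expression for $R_\Delta$ with $C$ and using the projection formula once more gives
\[
R_\Delta \cdot C = (K_X+\Delta)\cdot C - (K_X+\Delta)\cdot f_*C = (1 - \lambda)(K_X+\Delta)\cdot C.
\]
Because the extremal ray is $(K_X+\Delta)$-negative we have $(K_X+\Delta)\cdot C < 0$, and because $\lambda > 1$ we have $1 - \lambda < 0$; hence $R_\Delta \cdot C > 0$, so $R_\Delta$ is $\pi$-ample. The only genuinely substantive step is the use of int-amplification to promote the a priori positive eigenvalue $\lambda$ to $\lambda > 1$; the rest is a formal manipulation of intersection numbers together with the relative cone theorem.
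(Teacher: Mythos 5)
Your proof is correct and uses essentially the same argument as the paper: both identify $R_\Delta \sim (K_X+\Delta) - f^*(K_X+\Delta)$, show that $f$ acts by a scalar $>1$ on the one-dimensional relative numerical space (the paper via $f^*$ on $N^1(X/Y)$, you via $f_*$ on the class of $C$), and combine this with the $(K_X+\Delta)$-negativity of the ray. The two presentations are dual to each other and interchangeable.
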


\begin{proof}
Since $N^1(X/Y)$ is $1$-dimensional, there exists a positive number $a$ such that $f^*|_{N^1(X/Y)}=a \cdot \id_{N^1(X/Y)}$.
Hence we have
\[
R_{\Delta} \sim K_X+\Delta-f^*(K_X+\Delta) \equiv_{Y} (1-a)(K_X+\Delta).
\]
Since $-(K_X+\Delta)$ is $\pi$-ample, it is enough to show $a>1$.
By the definition of int-amplified endomorphisms, there exists an ample Cartier divisor $H$ on $X$ such that $f^*H-H$ is also ample.
On the othe hand, $f^*H-H \equiv_{Y} (a-1)H$, so we have $a>1$.
\end{proof}

\begin{lem}\label{Fano type if the ramification divisor contains ample div}
We consider the following commutative diagram
\[
\xymatrix{
(X,\Delta) \ar[r]^{\pi} \ar[d]_{f} & (Y,\Gamma) \ar[r]^{b} \ar[d]^{g} & B \ar[d]^{h}  \\
(X,\Delta) \ar[r]_{\pi}  & (Y,\Gamma) \ar[r]_{b} & B, 
}
\]
where $\pi$ is a Mori fiber space of canonical bundle formula type, $f$, $g$, $h$ are int-amplified endomorphisms, $b$ is a surjective morphism to a normal projective variety $B$ and $(X,\Delta)$ is an $f$-pair.
Assume that $Y$ is of Fano type over $B$ and $R_{\Gamma}$ contains an effective $b$-ample $\Q$-Cartier divisor $H$ on $Y$.
Then 
\begin{itemize}
    \item $R_{\Delta}$ contains an effective $(b\circ \pi)$-ample $\Q$-Cartier divisor, and
    \item $(X,\Delta)$ is of Fano type over $B$.
\end{itemize}
\end{lem}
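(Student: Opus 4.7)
First I would handle the first bullet. Proposition~\ref{mfs of cbf type for f-pairs} gives $R_\Delta - \pi^*R_\Gamma \geq 0$ with no $\pi$-vertical components, so combined with the hypothesis $R_\Gamma \geq H$ we obtain $R_\Delta \geq \pi^*H$. Lemma~\ref{ramification divisor is relative ample} says $R_\Delta$ is $\pi$-ample. Hence there is $N_0 > 0$ with $R_\Delta + N_0 \pi^*H$ being $(b\circ\pi)$-ample, and for every $\epsilon \in (0, 1/(1+N_0))$ the divisor
\[
A_\epsilon := \epsilon R_\Delta + (1-\epsilon)\pi^*H = \epsilon\bigl(R_\Delta + N_0 \pi^*H\bigr) + \bigl(1-\epsilon(1+N_0)\bigr)\pi^*H
\]
is a positive combination of a $(b\circ\pi)$-ample and a $(b\circ\pi)$-nef divisor, hence is $(b\circ\pi)$-ample, while $A_\epsilon \leq R_\Delta$ follows from $\pi^*H \leq R_\Delta$.

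For the second bullet I would apply Proposition~\ref{first property of Fano type}(3): it suffices to produce an effective $(b\circ\pi)$-big divisor $\Omega$ on $X$ with $(X, \Delta+\Omega)$ klt and $K_X+\Delta+\Omega \equiv_B 0$. The first step is to show $-(K_X+\Delta)$ is $(b\circ\pi)$-big. The $f$-pair identity $f^*(K_X+\Delta) \sim (K_X+\Delta) - R_\Delta$ telescopes to
\[
f^{n*}\bigl(-(K_X+\Delta)\bigr) \sim -(K_X+\Delta) + \sum_{i=0}^{n-1} f^{i*}R_\Delta \geq -(K_X+\Delta) + \sum_{i=0}^{n-1} f^{i*}A_\epsilon,
\]
and each $f^{i*}A_\epsilon$ is $(b\circ\pi)$-ample by the equivariance of $f$ with the endomorphisms $g$ on $Y$ and $h$ on $B$ (so $f^i$ restricts to a finite morphism on every fiber of $b\circ\pi$, and finite pullback of ample is ample). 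By Proposition~\ref{first properties of int-ampl endo}(4) applied to $f$ and $D = -(K_X+\Delta)$, $-(K_X+\Delta)$ is absolutely numerically equivalent to an effective divisor, hence $(b\circ\pi)$-pseudo-effective, and so the right-hand side above is $(b\circ\pi)$-big. Since $f^n$ is finite surjective, $-(K_X+\Delta)$ itself is $(b\circ\pi)$-big.

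To produce $\Omega$ from this bigness, I would pick a general effective $G \sim_{\Q,\pi} -(K_X+\Delta)$, so that $(X, \Delta+G)$ is klt and $K_X+\Delta+G \sim_{\Q,\pi} 0$, and apply Ambro's canonical bundle formula to write $K_X+\Delta+G \sim_\Q \pi^*(K_Y + \Gamma_Y^\sharp)$ with $(Y, \Gamma_Y^\sharp)$ klt and $\Gamma_Y^\sharp \geq \Gamma$ (as in the proof of Proposition~\ref{pairs of cbf type are klt}). Running the analogous telescoping argument on $Y$ with $g$ and $R_\Gamma \geq H$ shows $-(K_Y+\Gamma)$ is $b$-big, and hence after controlling the moduli contribution to $\Gamma_Y^\sharp$, also $-(K_Y+\Gamma_Y^\sharp)$ is $b$-big. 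Using that $Y$ is of Fano type over $B$ together with klt-ness of $(Y, \Gamma_Y^\sharp)$ and $b$-bigness of $-(K_Y+\Gamma_Y^\sharp)$, one promotes $(Y, \Gamma_Y^\sharp)$ to Fano type over $B$, extracts an effective $b$-big $\Omega_Y$ with $(Y, \Gamma_Y^\sharp+\Omega_Y)$ klt and $K_Y+\Gamma_Y^\sharp+\Omega_Y \equiv_B 0$, and sets $\Omega := G + \pi^*\Omega_Y$; this $\Omega$ satisfies all the conditions required by Proposition~\ref{first property of Fano type}(3).

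The main obstacle is the promotion step "$Y$ is FT over $B$, $(Y, \Gamma_Y^\sharp)$ klt, and $-(K_Y+\Gamma_Y^\sharp)$ $b$-big $\Rightarrow$ $(Y, \Gamma_Y^\sharp)$ FT over $B$", which requires producing an effective $b$-big representative of $-(K_Y+\Gamma_Y^\sharp)$ that keeps $(Y, \Gamma_Y^\sharp + \text{representative})$ klt. This is a standard but delicate consequence of the Fano-type structure of $Y$ over $B$: one extracts a sufficiently divisible $b$-basepoint-free subsystem via the base-point-free theorem applicable on an FT variety, and a Bertini-type general representative combined with a convex-combination perturbation against an auxiliary fixed effective representative yields an $\Omega_Y$ with controlled singularities.
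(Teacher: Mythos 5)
Your treatment of the first bullet is correct and essentially the same as the paper's: both you and the paper take a convex combination of the $\pi$-ample divisor $R_\Delta$ (Lemma~\ref{ramification divisor is relative ample}) with the nef-over-$B$ divisor $\pi^*H$, using the chain $0 \le \pi^*H \le \pi^*R_\Gamma \le R_\Delta$ from Proposition~\ref{mfs of cbf type for f-pairs}; the paper's $H_X = \tfrac{1}{m}R_\Delta + (1-\tfrac{1}{m})\pi^*H$ is your $A_\epsilon$ with $\epsilon = 1/m$. Your telescoping argument for the $(b\circ\pi)$-bigness of $-(K_X+\Delta)$ is also correct, though more elaborate than needed: the paper simply notes that $-f^*(K_X+\Delta) \sim R_\Delta - (K_X+\Delta)$ is big over $B$ because $R_\Delta$ contains a $(b\circ\pi)$-ample divisor and $-(K_X+\Delta)$ is pseudo-effective (Proposition~\ref{first property of f-pairs}), and then pulls back along the finite map $f$.

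For the second bullet, however, your route diverges from the paper's and contains a genuine gap. You attempt to descend via Ambro's canonical bundle formula, producing $(Y,\Gamma_Y^\sharp)$ with $K_X+\Delta+G \sim_\Q \pi^*(K_Y+\Gamma_Y^\sharp)$, then ``promote'' $(Y,\Gamma_Y^\sharp)$ to Fano type over $B$ and finally set $\Omega := G + \pi^*\Omega_Y$. Two steps are not justified. (i) The promotion step---that klt $(Y,\Gamma_Y^\sharp)$ with $-(K_Y+\Gamma_Y^\sharp)$ $b$-big on an FT-over-$B$ variety must itself be FT over $B$---is precisely the kind of claim this lemma is meant to establish (on $X$ rather than $Y$); you flag it yourself as ``the main obstacle,'' and your sketch via a base-point-free subsystem doesn't apply because $-(K_Y+\Gamma_Y^\sharp)$ is only assumed $b$-big, not $b$-nef. (ii) Even granting the promotion, you would still need $(X, \Delta + G + \pi^*\Omega_Y)$ to be klt in order to invoke Proposition~\ref{first property of Fano type}(3), and this is not addressed; pulling back a boundary $\Omega_Y$ along the fibration does not automatically preserve klt-ness of the total space pair, and making this precise would require an inversion-of-adjunction style comparison of discrepancies that your sketch omits. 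In addition, identifying $\Gamma_Y^\sharp = B_Y + M_Y$ and asserting that $(Y,\Gamma_Y^\sharp)$ is klt with $\Gamma_Y^\sharp \ge \Gamma$ glosses over the fact that the moduli part $M_Y$ is a priori only b-nef and need not give an effective boundary on $Y$ itself.

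The paper avoids all of this by a different, more direct mechanism: it first proves a Claim that $-(K_X+\Delta) + (f^m)^*\pi^*H$ is $(b\circ\pi)$-ample for some $m$ (this uses the cone theorem on $\NE(Y/B)$, which is where the hypothesis that $Y$ is FT over $B$ actually enters, together with the eigenvalues $a_i > 1$ of $g_*$ on the finitely many extremal rays). It then takes a general $D \sim_{\Q,B} -(K_X+\Delta) + f^*\pi^*H$ with $(X,\Delta+D)$ klt and iterates the Fujino--Gongyo pushforward $D_{i+1} = d^{-1}f_*(R_f+\Delta+D_i) - \Delta$ twice, reading off an explicit $\Omega = D_2 - d^{-1}f_*\pi^*H$ which is effective, keeps $(X,\Delta+\Omega)$ klt (by \cite[Lemma~1.1]{fujino-gongyo}), and satisfies $K_X+\Delta+\Omega \equiv_B 0$. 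The bigness of $\Omega$ over $B$ then reduces to the bigness of $-(K_X+\Delta)$ over $B$, which you have. You should replace your descent-to-$Y$ argument with a pushforward construction of this type; as it stands, the promotion step and the klt-ness of $\pi^*\Omega_Y$ leave the proof incomplete.
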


\begin{proof}
By Lemma \ref{ramification divisor is relative ample}, there exists a positive integer $m$ such that
\[
H_X :=\frac{1}{m}(R_{\Delta}-\pi^*H)+\pi^*H=\frac{1}{m}R_{\Delta}+(1-\frac{1}{m})\pi^*H
\]
is $(b\circ \pi)$-ample.
By Proposition \ref{mfs of cbf type for f-pairs}, we have 
\[
0 \leq \pi^*H \leq \pi^*R_{\Gamma} \leq R_{\Delta}.
\]
In particular, we have $0 \leq H_X \leq R_{\Delta}$, and we obtain the first assertion.

Next, we prove the second assertion. 
First, we prove the following claim.
\begin{claim}
There exists a positive integer $m$ such that $-(K_X+\Delta)+(f^m)^*\pi^*H$ is $(b\circ \pi)$-ample.
\end{claim}

\begin{claimproof}
Since $-(K_X+\Delta)$ is $\pi$-ample, there exists a positive integer $k$ such that $-(K_X+\Delta)+k\pi^*H$ is $(b\circ \pi)$-ample.
Since we have
\begin{eqnarray*}
-(K_X+\Delta)+(f^m)^*\pi^*H&=&-(K_X+\Delta)+k\pi^*H+((f^m)^*\pi^*H-k\pi^*H)\\
&=& -(K_X+\Delta)+k\pi^*H+\pi^*((g^m)^*H-kH),
\end{eqnarray*}
it is enough to show that there exists a positive integer $m$ such that $(g^m)^*H-kH$ is $b$-ample.
Since $Y$ is of Fano type over $B$, the number of the extremal rays of $\NE(Y/B)$ is finite by the cone theorem (cf. \cite[Theorem 3.7]{kollar-mori}).
Let $R_1, \ldots , R_N$ be all extremal rays of $\NE(Y/B)$.
Replacing $g$ by some iterate of $g$, we may assume that $g_*R_i=R_i$ for all $i$. 
Let $v_i \in R_i \backslash \{0\}$,
then $g_*(v_i)=a_iv_i$ for some positive number $a_i$ for all $i$.
By the definition of int-amplified endomorphisms, there exists an ample Cartier divisor $A$ on $Y$ such that $g^*A-A$ is ample.
Then we have
\[
0 < (g^*A-A)\cdot v_i=(a_i-1)(A \cdot v_i),
\]
so we obtain $a_i>1$ for all $i$.
In particular, for enough large $m$, we have
\[
((g^m)^*H-kH)\cdot v_i=(a_i^m-k)(H\cdot v_i) >0,
\]
and it means that $(g^m)^*H-kH$ is $b$-ample.
\end{claimproof}

By the above claim, we may assume that $-(K_X+\Delta)+f^*\pi^*H$ is $(b\circ \pi)$-ample, replacing $f$ by some iterate of $f$.
In particular, there exists a Weil divisor $D\sim_{\Q,B}-(K_X+\Delta)+f^*\pi^*H$ such that $(X,\Delta+D)$ is klt.
Next, we define a $\Q$-Weil divisor $D_1$ as 
\[
D_1=d^{-1}f_*(R_f+\Delta+D)-\Delta,
\]
where $d=\deg(f)$.
Then we have
\begin{eqnarray*}
D_1=d^{-1}f_*(R_f+\Delta+D)-\Delta&=&d^{-1}f_*(R_{\Delta}+f^*\Delta+D)-\Delta \\
&=& d^{-1}f_*(R_{\Delta}+D) \geq 0,
\end{eqnarray*}
and
\[
D_1+\Delta=d^{-1}f_*(R_f+\Delta+D).
\]
Since this construction is the same as the construction in \cite[Lemma 1.1]{fujino-gongyo}, $(X,\Delta+D_1)$ is klt and we have
\[
f^*(K_X+\Delta+D_1)\equiv K_X+\Delta+D \equiv_B f^*\pi^*H
\]
by an argument similar to the proof of \cite{fujino-gongyo}.
In particular, we have
\[
K_X+\Delta+D_1\equiv_B \pi^*H.
\]
Next, we construct $D_2$ by the same way, that is,
\[
D_2=d^{-1}f_*(R_f+\Delta+D_1)-\Delta=d^{-1}f_*(R_{\Delta}+D_1).
\]
Then $D_2$ is effective, $(X,\Delta+D_2)$ is klt and
\[
K_X+\Delta+D_2\equiv_B d^{-1}f_*\pi^*H
\]
by the same way as above.
By the construction, we have
\[
D_2 \geq d^{-1}f_*R_{\Delta} \geq d^{-1}f_*\pi^*H.
\]
Let $\Omega=D_2-d^{-1}f_*\pi^*H$, then $(X,\Delta+\Omega)$ is klt and we have
\[
K_X+\Delta+\Omega\equiv_B d^{-1}f_*\pi^*H-d^{-1}f_*\pi^*H=0.
\]
Therefore it is enough to show that $-(K_X+\Delta)$ is $(b\circ \pi)$-big by Proposition \ref{first property of Fano type}.
Since $R_{\Delta}$ contains $(b\circ \pi)$-ample $\Q$-Cartier divisor, $R_{\Delta}$ is $(b\circ \pi)$-big.
Since $-(K_X+\Delta)$ is pseudo-effective by Proposition \ref{first property of f-pairs}, 
\[
-f^*(K_X+\Delta)\sim R_{\Delta}-(K_X+\Delta)
\]
is also $(b\circ \pi)$-big.
In particular, $-(K_X+\Delta)$ is $(b\circ \pi)$-big, and we obtain Lemma \ref{Fano type if the ramification divisor contains ample div}.
\end{proof}

\begin{cor}\label{fano type of a tower of mfs}
We consider the following sequence
\[
\xymatrix{
(X_0,\Delta_0) \ar[r]^-{\pi_0} & (X_1,\Delta_1) \ar[r]^-{\pi_1} & \cdots \ar[r]^-{\pi_{r-1}} &(X_r,\Delta_r) \ar[r]^-{\rho} & B, 
}
\]
where
\begin{itemize}
    \item $\pi_i \colon (X_i,\Delta_i) \to (X_{i+1},\Delta_{i+1})$ is a Mori fiber space of canonical bundle formula type for $0 \leq i \leq r-1$,
    \item $\rho \colon X_r \to B$ is a $(K_{X_r}+\Delta_r)$-negative extremal ray contraction, and
    \item $X_0$ has an int-amplified endomorphism $f_0$ such that $(X_0,\Delta_0)$ is an $f_0$-pair.
\end{itemize}
Then $(X_0,\Delta_0)$ is of Fano type over $B$.
\end{cor}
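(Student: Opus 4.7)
My plan is to prove, by downward induction on $i \in \{r, r-1, \ldots, 0\}$, the stronger statement that $(X_i, \Delta_i)$ is of Fano type over $B$ \emph{and} that $R_{\Delta_i}$ contains an effective $b_i$-ample $\Q$-Cartier divisor, where $b_i \colon X_i \to B$ denotes the composite $\rho \circ \pi_{r-1} \circ \cdots \circ \pi_i$ (so $b_r = \rho$ and $b_0$ is the full composite). Taking $i = 0$ yields the corollary. Both the base and inductive steps will be supplied by Lemmas \ref{ramification divisor is relative ample} and \ref{Fano type if the ramification divisor contains ample div}.

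Before running the induction, I would make the whole tower equivariant. I would replace $f_0$ by a sufficiently high iterate, still int-amplified by Proposition \ref{first properties of int-ampl endo}(1), so that Theorem \ref{equivariant extremal ray contraction} applies in turn to each $\pi_i$ and to $\rho$. Each $(X_i, \Delta_i)$ is klt by Proposition \ref{pairs of cbf type are klt}, so this is legitimate. The induced endomorphisms $f_i \colon X_i \to X_i$ and $h \colon B \to B$ are surjective and hence int-amplified by Proposition \ref{first properties of int-ampl endo}(2). Moreover, since $(X_0, \Delta_0)$ is an $f_0$-pair, Proposition \ref{mfs of cbf type for f-pairs} applied at each stage shows that each $(X_i, \Delta_i)$ is an $f_i$-pair.

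For the base case $i = r$, the hypothesis that $\rho$ is a $(K_{X_r} + \Delta_r)$-negative extremal ray contraction together with the klt-ness of $(X_r, \Delta_r)$ gives that $(X_r, \Delta_r)$ is log Fano, hence of Fano type, over $B$; and Lemma \ref{ramification divisor is relative ample} gives that $R_{\Delta_r} \geq 0$ is itself $\rho$-ample, so in particular contains an effective $b_r$-ample divisor. The inductive step from $i+1$ down to $i$ is then an immediate application of Lemma \ref{Fano type if the ramification divisor contains ample div} with $(X, \Delta) = (X_i, \Delta_i)$, $(Y, \Gamma) = (X_{i+1}, \Delta_{i+1})$, $b = b_{i+1}$, and $H$ taken to be the effective $b_{i+1}$-ample divisor inside $R_{\Delta_{i+1}}$ produced by the inductive hypothesis. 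The lemma delivers both required conclusions for index $i$ (Fano type over $B$, and an effective $(b_{i+1} \circ \pi_i) = b_i$-ample divisor inside $R_{\Delta_i}$), closing the induction. There is no real obstacle; the geometric content is already packaged into Lemma \ref{Fano type if the ramification divisor contains ample div}, and the main task is the bookkeeping of equivariance and the identification of the correct strengthened induction hypothesis that carries the ampleness data up the tower.
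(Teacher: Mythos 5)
Your proof is correct and follows essentially the same route as the paper's: iterate $f_0$ once to make the whole tower equivariant (Theorem \ref{equivariant extremal ray contraction}), propagate the $f$-pair property down via Proposition \ref{mfs of cbf type for f-pairs}, establish the base case at $X_r$ using Lemma \ref{ramification divisor is relative ample} and klt-ness, and then climb the tower by repeated application of Lemma \ref{Fano type if the ramification divisor contains ample div}. Your explicit formulation of the strengthened induction hypothesis carrying the relative ampleness of (a part of) $R_{\Delta_i}$ is precisely what the paper does implicitly with the phrase ``inductively.''
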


\begin{proof}
By Theorem \ref{equivariant extremal ray contraction}, we may assume that the above sequence is $f_0$-equivarient, replacing $f_0$ by some iterate.
By Proposition \ref{mfs of cbf type for f-pairs}, $(X_i,\Delta_i)$ is an $f_i$-pair, where $f_i$ is the endomorphism of $X_i$ induced by $f_0$.
By Lemma \ref{ramification divisor is relative ample}, $R_{\Delta_r}$ is $\rho$-ample.
Since $-(K_{X_r}+\Delta_r)$ is $\rho$-ample and $(X_r,\Delta_r)$ is klt,
$(X_r,\Delta_r)$ is of Fano type over $B$.
By using Lemma \ref{Fano type if the ramification divisor contains ample div} inductively, we obtain $R_{\Delta_i}$ contains an effective $\Q$-Cartier divisor which is ample over $B$ and $(X_i,\Delta_i)$ is of Fano type over $B$ for all $i$.
In conclusion, $(X_0,\Delta_0)$ is of Fano type over $B$.
\end{proof}

\begin{lem}\label{two-ray game}\textup{\cite[Lemma 3.3,  3.4]{cerbo-svaldi}}
Let $X$ be a normal $\Q$-factorial projective variety and $\Delta$ an effective $\Q$-Weil divisor.
Let $\pi \colon X \to Y$ be a Mori fiber space and $\rho \colon Y \to W$ an extremal ray contraction which is birational.
Assume that $(X,\Delta)$ is of Fano type over $W$.
Then we have the following commutative diagram
\[
\xymatrix{
X \ar@{-->}[rr]^-{\mu_X} \ar[d]_-{\pi} &   & X' \ar[d]^-{\pi'} \\
Y \ar@{-->}[rr]^-{\mu_Y} \ar[rd]_-{\rho} &   & Y' \ar[ld] \\
  & W, &    \\
}
\]
such that $\mu_Y$ is a flip of $\rho$ or a divisorial contraction when $\mu_Y=\rho$, $\mu_X$ is a birational map obtained by a MMP over $W$ for some divisor on $X$, and $\pi'$ is $(K_X'+\Delta')$-Mori fiber space, where $\Delta'=(\mu_{X})_*\Delta$.
\end{lem}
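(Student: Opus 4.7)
The plan is to play the standard two-ray game on $X$ over $W$. Since $\pi$ is a Mori fiber space, the relative Picard number satisfies $\rho(X/Y)=1$, and since $\rho\colon Y\to W$ is an extremal contraction, $\rho(Y/W)=1$; hence $\rho(X/W)=2$. As $(X,\Delta)$ is of Fano type over $W$, the relative Mori cone $\NE(X/W)$ is rational polyhedral, and its two extremal rays are precisely the fiber ray $R_{1}$ of $\pi$ together with a second ray $R_{2}$ whose image under $\pi_{*}$ generates the $\rho$-contracted ray of $\NE(Y/W)$.

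Next I would single out $R_{2}$ and run an MMP that only touches it. Fix a $\rho$-ample Cartier divisor $A$ on $Y$ and an ample divisor $H$ on $X$; then $\pi^{*}A$ is numerically trivial on $R_{1}$ and strictly positive on $R_{2}$, so the $\Q$-divisor $D:=\epsilon H-\pi^{*}A$ satisfies $D\cdot R_{1}>0$ and $D\cdot R_{2}<0$ for sufficiently small $\epsilon>0$. Since $(X,\Delta)$ is of Fano type over $W$, a $D$-MMP over $W$ exists and terminates. Let $\mu_{X}\colon X\dashrightarrow X'$ be the resulting birational map. Because the strict transform of $R_{1}$ stays $D$-positive at every step, only the transforms of $R_{2}$ are ever contracted or flipped; the fiber ray survives throughout.

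The endgame splits into two cases depending on whether the MMP involves a divisorial contraction or consists entirely of flips. If every step is a flip, then $\mu_{X}$ is a small modification, $\rho(X'/W)=2$ still holds, and the strict transform $R_{1}'$ remains an extremal fiber-type ray of $\NE(X'/W)$. By Proposition~\ref{fano type birational contraction}, $(X',\Delta')$ is again of Fano type over $W$, so the contraction of $R_{1}'$ realises a $(K_{X'}+\Delta')$-Mori fiber space $\pi'\colon X'\to Y'$, and the induced rational map $Y\dashrightarrow Y'$ is then the flip of $\rho$. If instead the MMP performs a divisorial contraction at some step, the contracted divisor on $X$ must lie over $\Exc(\rho)$ (forcing $\rho$ itself to be divisorial), and $\rho(X'/W)$ drops to $1$, so the resulting morphism $X'\to W$ is the desired Mori fiber space and we take $Y'=W$, $\mu_{Y}=\rho$.

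The main obstacle I anticipate is checking that in the flip case $\mu_{Y}$ is genuinely the flip of $\rho$ rather than some coarser birational modification, and that in the divisorial case the divisorial step on $X$ contracts precisely the $\pi$-preimage of $\Exc(\rho)$; both are verified by tracking relative Picard numbers and numerical intersections with the strict transforms of $\pi^{*}A$, and are carried out in \cite[Lemmas~3.3, 3.4]{cerbo-svaldi}.
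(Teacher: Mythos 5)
Your overall approach is the same one the paper relies on: cite the two‑ray game of \cite[Lemmas 3.3, 3.4]{cerbo-svaldi} and push through the argument for a pair $(X,\Delta)$ rather than for $K_X$ alone. The paper in fact contains essentially no proof of this lemma---it is stated as a citation, and Remark \ref{remarks for two-ray game} is devoted entirely to the one place where the cited argument does not transfer verbatim: the $\pi'$-ampleness of $-(K_{X'}+\Delta')$. That is exactly the step where your write-up has a gap.

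You assert: ``By Proposition~\ref{fano type birational contraction}, $(X',\Delta')$ is again of Fano type over $W$, so the contraction of $R_{1}'$ realises a $(K_{X'}+\Delta')$-Mori fiber space.'' This does not follow. Being of Fano type over $W$ guarantees the existence of some boundary $\Gamma$ with $-(K_{X'}+\Delta'+\Gamma)$ relatively ample, and it guarantees that the relative Mori cone is rational polyhedral with contractible faces; it does not, by itself, make the contraction of an arbitrary extremal ray of $\NE(X'/W)$ a $(K_{X'}+\Delta')$-\emph{negative} contraction. You need a separate argument that $-(K_{X'}+\Delta')$ is ample over $Y'$. The argument the paper records is short but genuinely needed: $-(K_X+\Delta)$ is big over $W$ because $(X,\Delta)$ is of Fano type over $W$; bigness over $W$ is preserved by pushforward along the birational contraction $\mu_X$, so $-(K_{X'}+\Delta')$ is big over $W$ and hence big over $Y'$ (as $Y'\to W$ is birational); and since $\rho(X'/Y')=1$, a divisor that is big over $Y'$ must be ample over $Y'$. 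The same reasoning is needed in your divisorial case to see that $X'\to W$ is actually a $(K_{X'}+\Delta')$-Mori fiber space and not merely a fiber space.

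A smaller point: the bookkeeping phrase ``the strict transform of $R_1$ stays $D$-positive'' is not a precise statement, since flips do not induce a strict transform on one-cycle classes; the standard way to phrase the two-ray game is that at each step $\rho(X_i/W)=2$ and the $D_i$-negative ray is the one contracted, with termination supplied by running the MMP for $(X,\Delta+B+\epsilon D')$ over $W$ for a suitable klt boundary $B$ with $K_X+\Delta+B\sim_{\Q,W}0$ and an effective $D'\sim_{\Q,W}D$. This is implicitly what you are doing, but if you are going to reprove the lemma rather than cite it, this should be spelled out so that the MMP you run is a legitimate one.
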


\begin{rmk}\label{remarks for two-ray game}
In \cite{cerbo-svaldi}, they only deal with $K_Y$-flips and $K_Y$-divisorial contractions.
However, Lemma \ref{two-ray game} follows from the same argument as \cite[Lemma 3.3,  3.4]{cerbo-svaldi} except for $\pi'$-ampleness of $-(K_{X'}+\Delta')$.
In our case, since $-(K_X+\Delta)$ is big over $W$, $-(K_{X'}+\Delta')$ is also big over $W$.
Since the relative Picard rank of $X'$ over $Y'$ is equal to one, $-(K_{X'}+\Delta')$ is ample over $Y'$.

\end{rmk}

\begin{defn}\label{biratinal contraction on a tower of Mfs of cbf}
A birational map $\mu \colon (Y,\Gamma) \dashrightarrow (Y'\Gamma')$ of pairs is called a {\em birational contraction over towers of Mori fiber spaces} if
we have a following commutative diagram
\[
\xymatrix@C=15pt@R=20pt{
(Y,\Gamma) \ar@{-->}[rr]^-{\mu} \ar[d]_-{\pi_0} &  & (Y',\Gamma') \ar[d]^-{\pi_0'} \\
(Z_1,\Omega_1) \ar[d]_-{\pi_1} &  & (Z_1',\Omega_1') \ar[d]^-{\pi_1'} \\
\vdots \ar[d]_-{\pi_{r-1}} &  & \vdots \ar[d]^-{\pi_{r-1}'} \\
(Z_r,\Omega_r) \ar@{-->}[rr]^-{\mu_r} \ar[rd]_{\rho} &  & (Z_r',\Omega_r') \ar[ld] \\
& B, & \\
}
\]
where, 
\begin{itemize}
    \item $\pi_i$, $\pi_i'$ are Mori fiber spaces of canonical bundle formula type for all $0 \leq i \leq r-1$,
    \item $\rho$ is a $(K_{Z_r}+\Omega_r)$-negative extremal ray contraction which is birational,
    \item $\mu_r$ is a flip of $\rho$ or a divisorial contraction when $\mu_Y=\rho$,
    \item $\mu$ is a birational map which is a composition of steps of MMP over $B$, and
    \item $\Gamma'=\mu_*\Gamma$.
\end{itemize}
\end{defn}

\begin{rmk}
Divisorial contractions and flips obtained by a $(K_Y+\Gamma)$-negative extremal ray contraction are birational contractions over towers of Mori fiber spaces.
\end{rmk}

\begin{lem}\label{lift of the tower of Mori fiber spaces}
Let $\pi \colon (X,\Delta) \to (Y,\Gamma)$ be a Mori fiber space of canonical bundle formula type and $\mu_Y \colon (Y,\Gamma) \dashrightarrow (Y',\Gamma')$ is a birational contraction over towers of Mori fiber spaces.
Assume that $X$ has an int-amplified endomorphism $f$ such that $(X,\Delta)$ is an $f$-pair.
Then we obtain a following commutative diagram
\[
\xymatrix{
(X,\Delta) \ar@{-->}[r]^-{\mu_X} \ar[d]_-{\pi} &  (X',\Delta') \ar[d]^-{\pi'} \\
(Y,\Gamma) \ar@{-->}[r]_-{\mu_Y} &  (Y',\Gamma'),  \\
}
\]
where $\mu_X$ is a birational contraction over towers of Mori fiber spaces and $\pi'$ is a Mori fiber space of canonical bundle formula type.
Furthermore, $f^m$ induces an int-amplified endomorphism $f'$ of $X'$ such that $(X',\Delta')$ is an $f'$-pair.
\end{lem}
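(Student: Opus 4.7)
The plan is to lift $\mu_Y$ through $\pi$ by iterating the two-ray game of Lemma~\ref{two-ray game} along the MMP decomposition of $\mu_Y$, after replacing $f$ by a suitable iterate. By Definition~\ref{biratinal contraction on a tower of Mfs of cbf}, $\mu_Y$ is a composition of finitely many $(K_Y+\Gamma)$-MMP steps over $B$, and only finitely many extremal rays appear in the whole diagram (the LHS tower, the RHS tower, the flip or contraction $\mu_r$, and each step of $\mu$). Theorem~\ref{equivariant extremal ray contraction} together with Proposition~\ref{first properties of int-ampl endo}(2) lets me replace $f$ by some $f^m$ so that all these extremal rays, their contractions, and $\mu_r$ become equivariant, and Proposition~\ref{mfs of cbf type for f-pairs} then equips each of $(Y,\Gamma)$, $(Z_i,\Omega_i)$, $(Z_i',\Omega_i')$, $(Y',\Gamma')$ with the structure of a pair with respect to its induced endomorphism.

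Next, Corollary~\ref{fano type of a tower of mfs} applied to the extended tower
\[
X\to Y\to Z_1\to\cdots\to Z_r\to B
\]
shows $(X,\Delta)$ is of Fano type over $B$; since $\mu_Y$ runs over $B$, each intermediate extremal contraction has target $W$ with $X\to W\to B$, and a curve on $X$ contracted by $X\to W$ is also contracted by $X\to B$, so relative Fano type over $B$ implies relative Fano type over $W$. I then factor $\mu_Y = \mu_Y^{(k)}\circ\cdots\circ\mu_Y^{(1)}$ into single MMP steps over $B$, where $\mu_Y^{(i)}$ is the flip or divisorial contraction of a $(K+\Gamma^{(i-1)})$-negative extremal ray $\rho^{(i)}\colon Y^{(i-1)}\to W^{(i-1)}$, and proceed inductively: at stage $i$, with Mori fiber space $\pi^{(i-1)}\colon X^{(i-1)}\to Y^{(i-1)}$ and $(X^{(i-1)},\Delta^{(i-1)})$ of Fano type over $W^{(i-1)}$, Lemma~\ref{two-ray game} produces a birational map $\mu_X^{(i)}\colon X^{(i-1)}\dashrightarrow X^{(i)}$ and a Mori fiber space $\pi^{(i)}\colon X^{(i)}\to Y^{(i)}$. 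Proposition~\ref{fano type birational contraction} ensures the new pair $(X^{(i)},\Delta^{(i)})$ is again of Fano type over $B$, and after $k$ steps I obtain $\mu_X\colon X\dashrightarrow X'$ with $\pi'\colon X'\to Y'$.

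It remains to verify: (a) $\pi'$ is a Mori fiber space of canonical bundle formula type, i.e.\ $\Gamma'=\mu_*\Gamma$ coincides with the cbf pushforward of $\Delta'=(\mu_X)_*\Delta$; (b) $\mu_X$ is a birational contraction over the extended towers; and (c) $f^m$ descends to an int-amplified endomorphism $f'$ on $X'$ with $(X',\Delta')$ an $f'$-pair. Item (b) is immediate from the construction. For (c), the equivariance established above propagates through each MMP step, $f'$ is int-amplified by Proposition~\ref{first properties of int-ampl endo}(3), and the $f'$-pair property is obtained by pushing forward the effective divisor $R_f+\Delta-f^*\Delta$ through the birational contraction $\mu_X$. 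The main obstacle is (a), which is checked step by step: flip steps are isomorphisms in codimension one on both $X$ and $Y$, so the multiplicities $m_E$ and coefficients $\ord_F(\Delta)$ entering the cbf formula are preserved; divisorial contractions contract a single prime divisor on $Y^{(i-1)}$ together with its unique lift on $X^{(i-1)}$, and non-contracted divisors stay in correspondence. Hence $(\mu_Y^{(i)})_*$ of the cbf formula on $Y^{(i-1)}$ equals the cbf formula on $Y^{(i)}$ computed from $(X^{(i)},\Delta^{(i)})$, and induction on $i$ establishes (a).
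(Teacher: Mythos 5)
Your proposal follows the same strategy as the paper: iterate the two-ray game of Lemma~\ref{two-ray game} along the MMP decomposition of $\mu_Y$, using Corollary~\ref{fano type of a tower of mfs} to supply the needed relative Fano-type hypothesis, Proposition~\ref{fano type birational contraction} to propagate it through each step, and an equivariant pushforward computation to verify the $f'$-pair and canonical bundle formula conditions at the end. The only cosmetic difference is that you verify the cbf condition step by step with a flip/divisorial case distinction, whereas the paper checks it once at the end by tracking strict transforms of a prime divisor $E'$ on $Y'$ and its preimage $F'$ on $X'$ back through the birational contractions $\mu_Y$, $\mu_X$ (which is slightly cleaner and avoids relying on the relative Picard rank bookkeeping that underlies your assertion that flip steps lift to small maps on $X$, though that assertion is in fact correct).
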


\begin{proof}
By the definition of birational contractions over towers of Mori fiber spaces,
we obtain the following diagram
\[
\xymatrix@C=15pt@R=20pt{
(Y,\Gamma) \ar@{-->}[rr]^-{\mu_Y} \ar[d]_-{\pi_0} &  & (Y',\Gamma') \ar[d]^-{\pi_0'} \\
(Z_1,\Omega_1) \ar[d]_-{\pi_1} &  & (Z_1',\Omega_1') \ar[d]^-{\pi_1'} \\
\vdots \ar[d]_-{\pi_{r-1}} &  & \vdots \ar[d]^-{\pi_{r-1}'} \\
(Z_r,\Omega_r) \ar@{-->}[rr]^-{\mu_r} \ar[rd]_{\rho} &  & (Z_r',\Omega_r') \ar[ld] \\
& B & \\
}
\]
as in Definition \ref{biratinal contraction on a tower of Mfs of cbf}.
Since we obtain the following sequence
\[
\xymatrix{
(X,\Delta) \ar[r]^-{\pi} & (Y,\Gamma) \ar[r]^-{\pi_0} & \cdots \ar[r]^-{\pi_{r-1}} &(Z_r,\Omega_r) \ar[r]^-{\rho} & B, 
}
\]
and this sequence satisfies the assumption of Corollary \ref{fano type of a tower of mfs},
$(X,\Delta)$ is of Fano type over $B$.
Since $\mu_Y$ is a composition of flips and divisorial contractions, we can take the first step of $\mu_Y$ as follows,
\[
\xymatrix{
Y \ar@{-->}[rr]^-{\mu_{Y_1}} \ar[rd]_-{\rho_1} &   & Y_1 \ar[ld] \\
& W,  &  \\
}
\]
where $\rho_1$ is an extremal ray contraction over $B$ and $\mu_{Y_1}$ is a flip of $\rho_1$ or divisorial contraction when $\rho_1=\mu_{Y_1}$.
Since $\rho_1$ is a morphism over $B$ and $(X,\Delta)$ is of Fano type over $B$, $(X,\Delta)$ is also of Fano type over $W$.
By Lemma \ref{two-ray game}, we obtain the following commutative diagram
\[
\xymatrix{
X \ar@{-->}[rr]^-{\mu_{X_1}} \ar[d]_-{\pi} &   & X_1 \ar[d]^-{\pi_{X_1}} \\
Y \ar@{-->}[rr]^-{\mu_{Y_1}} \ar[rd]_-{\rho} &   & Y_1 \ar[ld] \\
  & W &    \\
}
\]
as in Lemma \ref{two-ray game}.
Let $\Delta_1=(\mu_{X_1})_*\Delta$, then $(X_1,\Delta_1)$ is of Fano type over $B$ by Proposition \ref{fano type birational contraction}.
Repeating the construction, we obtain the following commutative diagram
\[
\xymatrix{
(X,\Delta) \ar@{-->}[r]^-{\mu_X} \ar[d]_-{\pi} &  (X',\Delta') \ar[d]^-{\pi'} \\
(Y,\Gamma) \ar@{-->}[r]_-{\mu_Y} &  (Y',\Gamma'),  \\
}
\]
such that $\mu_X$ is a birational map which is a composition of steps of MMP over $B$ and $(X',\Delta')$ is of Fano type over $Y'$ by the construction, where $\Delta'=(\mu_X)_*\Delta$.
In particular $f^m$ induces an int-amplified endomorphism $f'$ of $X'$ for some $m$.

Finally, we prove that $(X',\Delta')$ is an $f'$-pair and $\pi' \colon (X',\Delta') \to (Y,\Gamma)$ is a Mori fiber space of canonical bundle formula type.
First, since we have
\[
R_{\Delta'}=(\mu_X)_*R_{\Delta,m}=(\mu_X)_*(R_{f^m}+\Delta-(f^m)^*\Delta) \geq 0,
\]
$(X',\Delta')$ is an $f'$-pair.
Next, we take a prime divisor $E'$ on $Y'$ and $\pi'^*E'=m_{E'}F'$ for some $m_{E'}$ and prime divisor $F'$. 
Since $\mu_X$ and $\mu_Y$ are birational contractions, we can consider the strict transform $E$ and $F$ of $E'$ and $F'$, respectively.
Since $\mu_X$ and $\mu_Y$ are isomorphism on the generic points of $F$ and $E$, respectively, we have $\pi^*E=m_{E'}F$, $\ord_F(\Delta)=\ord_{F'}(\Delta')$ and $\ord_E(\Gamma)=\ord_{E'}(\Gamma')$.
Hence $\pi'$ is a Mori fiber space of canonical bundle formula type, since so is $\pi$.
\end{proof}

\begin{defn}\label{definition of steps of MMP of canonical bundle formula type}
A sequence of biratinal maps and morphisms of pairs
\[
\xymatrix@R=15pt{
(X,\Delta) \ar@{-->}[r]^-{\sigma_0} & (X',\Delta') \ar[d]^-{\pi_0} & & &  &    \\
& (X_1,\Delta_1) \ar@{-->}[r]^-{\sigma_1} & (X'_1,\Delta'_1) \ar[d]^-{\pi_1} & &  &   \\
& & (X_2,\Delta_2) \ar@{-->}[r]^-{\sigma_1} & \ar@{}[dr]|\ddots &  & \\
&  &  &      & \ar@{-->}[r]^-{\sigma_r}  &  (X_r',\Delta_r') \ar[d]^-{\pi_r} \\
&  &   &   &   & (W,\Delta_W), \\
}
\]
is called {\em sequence of steps of MMP of canonical bundle formula type} if 
\begin{itemize}
    \item $X$ is a normal $\Q$-factorial projective variety, $\Delta$ is an effective $\Q$-Weil divisor such that $(X,\Delta)$ is klt,
    \item $X_i \dashrightarrow X_i' \to X_{i+1}$ is obtained by $(K_{X_i}+\Delta_i)$-MMP for all $0 \leq i \leq r$, and
    \item $\pi_i$ is a Mori fiber space of canonical bundle formula type for all $0 \leq i \leq r$, where $\Delta_i'=(\sigma_i)_*\Delta_i$.
\end{itemize}

Furthermore, the above sequence is called {\em maximal} if $K_W+\Delta_W$ is pseudo-effective.
\end{defn}

\begin{rmk}\label{steps of MMP of canonical bundle formula type preserves f-pair}
If $K_W+\Delta_W$ is not pseudo-effective, then we can run MMP for $(W,\Delta_W)$ and we obtain a Mori fiber space.
In particular, there exists a maximal sequence of steps of MMP of canonical bundle formula type.

If $X$ has an int-amplified endomorphism, then for every surjective endomorphism $f$, the above sequence is $f^m$-equivarient for some positive integer $m$ by Theorem \ref{equivariant extremal ray contraction}.
We denote the induced endomorphisms on $X_i, X_i'$ and $W$ by $f_i, f_i'$ and $g$, respectively.
If $(X,\Delta)$ is an $f$-pair, then $(X_i,\Delta_i), (X_i',\Delta_i')$ and $(W,\Delta_W)$ are an $f_i$-pair, an $f_i'$-pair, a $g$-pair, respectively by Proposition \ref{mfs of cbf type for f-pairs} and the proof of Lemma \ref{lift of the tower of Mori fiber spaces}.
In particular, $-(K_W+\Delta_W)$ is pseudo-effective by Lemma \ref{first properties of int-ampl endo}.
If we further assume that the sequence is maximal, then $K_W+\Delta_W \sim_{\Q} 0$.
\end{rmk}

\begin{thm}\label{construction of a towers of mfs}
Let $(X,\Delta) \dashrightarrow \cdots \dashrightarrow (W,\Delta_W)$ be a sequence of steps of MMP of canonical bundle formula type and $f$ be an int-amplified endomorphism of $X$ such that $(X,\Delta)$ is an $f$-pair.
Then we obtain a sequence of birational contractionas over towers of Mori fiber spaces
\[
\mu \colon (X,\Delta) \dashrightarrow (X',\Delta')
\]
and a sequence of Mori fiber spaces of canonical bundle formula type
\[
(X',\Delta') \to (X_1',\Delta_1') \to \cdots \to (X_r',\Delta_r') \to (W,\Delta_W).
\]
Furthermore, $(X',\Delta')$ is of Fano type over $W$.
\end{thm}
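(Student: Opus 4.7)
The plan is to induct on $r$ in Definition \ref{definition of steps of MMP of canonical bundle formula type}, pushing each Mori fiber space upward through the birational portion of the zig-zag by iterated application of Lemma \ref{lift of the tower of Mori fiber spaces}. The base case $r = 0$ is essentially immediate: the sequence consists of $\sigma_0 \colon (X,\Delta) \dashrightarrow (X_0',\Delta_0')$ followed by a single MFS of cbf type $\pi_0 \colon (X_0',\Delta_0') \to (W,\Delta_W)$, and $\sigma_0$ is a composition of $(K_X+\Delta)$-negative flips and divisorial contractions, each of which is a birational contraction over towers of MFS by the Remark following Definition \ref{biratinal contraction on a tower of Mfs of cbf}. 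Fano type of $(X_0',\Delta_0')$ over $W$ then follows from Corollary \ref{fano type of a tower of mfs}.

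For the inductive step, after replacing $f$ by a suitable iterate, Theorem \ref{equivariant extremal ray contraction} makes the entire zig-zag $f$-equivariant and Remark \ref{steps of MMP of canonical bundle formula type preserves f-pair} endows each $(X_i,\Delta_i)$ with an $f_i$-pair structure for the induced endomorphism $f_i$. Applying the induction hypothesis to the tail starting from $(X_1,\Delta_1)$ yields a sequence of birational contractions over towers of MFS $\mu_1 \colon (X_1,\Delta_1) \dashrightarrow (\widetilde{X}_1,\widetilde{\Delta}_1)$, a tower of MFSs of cbf type $\widetilde{X}_1 \to Y_1 \to \cdots \to Y_{r-1} \to W$, and Fano type of $(\widetilde{X}_1,\widetilde{\Delta}_1)$ over $W$.

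The essential move is then to lift $\mu_1$ past the Mori fiber space $\pi_0 \colon X_0' \to X_1$. I decompose $\mu_1$ into its constituent birational contractions over towers of MFS and apply Lemma \ref{lift of the tower of Mori fiber spaces} once per constituent: each application combines the MFS of cbf type currently sitting above with the next birational contraction over towers of MFS below, and produces a lifted birational contraction over towers of MFS together with a new MFS of cbf type above its target. Iterating through $\mu_1$ produces $\widetilde{\mu} \colon X_0' \dashrightarrow \widetilde{X}_0'$ and an MFS $\widetilde{X}_0' \to \widetilde{X}_1$ of cbf type. Setting $\mu = \widetilde{\mu} \circ \sigma_0$ and prepending $\widetilde{X}_0' \to \widetilde{X}_1$ to the already-constructed tower supplies the required data; Fano type of $(\widetilde{X}_0',\widetilde{\Delta}_0')$ over $W$ follows once more from Corollary \ref{fano type of a tower of mfs}.

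The main obstacle is the careful bookkeeping of this iterated lifting: at every intermediate stage one must verify that the resulting map is genuinely a birational contraction over towers of MFS in the precise sense of Definition \ref{biratinal contraction on a tower of Mfs of cbf}, and that the $f$-pair structure together with an int-amplified endomorphism persists (possibly after replacing $f$ by a further iterate). Fortunately both points are packaged directly into the conclusion of Lemma \ref{lift of the tower of Mori fiber spaces}, so once the induction is set up correctly the argument reduces to organizing the applications of that lemma and invoking Corollary \ref{fano type of a tower of mfs} at the end.
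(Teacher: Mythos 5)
Your proposal is correct and follows essentially the same route as the paper: both inductions peel the sequence apart at the first Mori fiber space, invoke the inductive hypothesis on the tail, lift the resulting birational contraction over towers of MFS through the top MFS by iterated application of Lemma \ref{lift of the tower of Mori fiber spaces}, absorb the initial birational steps into $\mu$ using the Remark following Definition \ref{biratinal contraction on a tower of Mfs of cbf}, and close with Corollary \ref{fano type of a tower of mfs}. The only difference is cosmetic: the paper inducts on the total number of MMP steps and peels off one step at a time (handling the birational-step case and the MFS case separately), whereas you induct on $r$ and absorb the whole birational block $\sigma_0$ at once, but both reduce to the same repeated use of Lemma \ref{lift of the tower of Mori fiber spaces}.
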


\begin{proof}
We prove Theorem \ref{construction of a towers of mfs} by the induction on the length of the sequences of steps of MMP of canonical bundle formula type.
We take the first step $\pi \colon (X,\Delta) \dashrightarrow (Y,\Gamma)$ in the above sequence, in particular, $\pi$ is a flip, a divisorial contraction or a Mori fiber space obtained by a $(K_X+\Delta)$-negative extremal ray contraction. By the induction hypothesis, we obtain a sequence of birational contractionas over towers of Mori fiber spaces
\[
\mu_Y \colon (Y,\Gamma) \dashrightarrow (Y',\Gamma')
\]
and a sequence of Mori fiber spaces of canonical bundle formula type
\[
(Y',\Gamma') \to (Y_1',\Gamma_1') \to \cdots \to (Y_{r'-1},\Gamma_{r-1}') \to (W,\Delta_W).
\]
If $\pi$ is a birational map, $\mu_Y \circ \pi$ is also a composition over towers of canonical bundle formula type.
Hence we may assume that $\pi \colon (X,\Delta) \to (Y,\Gamma)$ is a Mori fiber space of canonical bundle formula type.
By repeatedly applying Lemma \ref{lift of the tower of Mori fiber spaces}, we obtain the following commutative diagram
\[
\xymatrix{
(X,\Delta) \ar@{-->}[r]^-{\mu} \ar[d]_-{\pi} &  (X',\Delta') \ar[d]^-{\pi'} \\
(Y,\Gamma) \ar@{-->}[r]_-{\mu_Y} &  (Y',\Gamma'),  \\
}
\]
where $\mu$ is a sequence of birational contractionas over towers of canonical bundle formula type and $\pi'$ is a Mori fiber space of canonical bundle formula type.
Hence, we obtain the following composition of Mori fiber spaces of canonical bundle formula type
\[
(X',\Delta') \to (Y',\Gamma') \to (Y_1',\Gamma_1') \to \cdots \to (Y_{r'-1},\Gamma_{r-1}') \to (W,\Delta_W).
\]
By Lemma \ref{equivariant extremal ray contraction}, $\mu$ is $f^m$-equivariant for some $m$, so $f^m$ induces an int-amplified endomorphism $f'$ of $X'$ such that $(X',\Delta')$ is an $f'$-pair.
By Lemma \ref{fano type of a tower of mfs}, we obtain $(X',\Delta')$ is of Fano type over $W$.
\end{proof}

\section{Maximal sequence of steps of MMP}

In this section, we construct $\widetilde{X}$ and $A$ in Theorem \ref{main thm} by studying maximal sequences of steps of MMP of canonical bundle formula type.
First, we construct $A$ as a covering of the output of a sequence of steps of MMP.
Lifting this covering, we construct $\widetilde{X}$.

\begin{lem}\label{totally invariant prime divisors}
Let $f \colon X \to X$ ba a surjective endomorphism of a normal variety $X$
and $\Sigma$ a finite set of the prime divisors on $X$.
Assume that for any $E \in \Sigma$ and an irreducible component $F$ of $f^{-1}(E)$, we have $F \in \Sigma$.
Then for any $E \in \Sigma$, $f^{-m}(E)=E$ as sets for some positive integer $m$.
\end{lem}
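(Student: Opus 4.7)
The plan is to extract $f$-periodicity of each $E\in\Sigma$ from the backward-closure hypothesis, bootstrap this to forward closure of $\Sigma$, and then show via an incidence count that the preimage map
\[
\Phi(E):=\{\text{irreducible components of }f^{-1}(E)\}
\]
restricts to a permutation of $\Sigma$. Throughout I use that a surjective endomorphism $f$ of a normal projective variety is finite, so for any prime divisor $E$ the preimage $f^{-1}(E)$ has pure codimension one (whence $\Phi(E)\neq\emptyset$), and every component $F\in\Phi(E)$ surjects onto $E$, i.e.\ $f(F)=E$.

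Given $E\in\Sigma$, I would iteratively choose a backward chain $E=F_{0},F_{1},F_{2},\ldots$ with $F_{i+1}\in\Phi(F_{i})$; by hypothesis all $F_{i}$ lie in $\Sigma$. A straightforward induction using $f(F_{i+1})=F_{i}$ gives $f^{i}(F_{i})=E$. Since $\Sigma$ is finite, some $F_{i}=F_{j}$ with $i<j$, and then
\[
f^{j-i}(E)=f^{j-i}(f^{i}(F_{i}))=f^{j}(F_{j})=E,
\]
so $E$ is $f$-periodic with some period $p\geq 1$. Using $f^{p}(E)=E\in\Sigma$, I would back-propagate: $f^{p-1}(E)\in\Phi(E)\subseteq\Sigma$, and inductively $f^{p-k}(E)\in\Phi(f^{p-k+1}(E))\subseteq\Sigma$ for each $k=1,\ldots,p$. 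Applied to every $E\in\Sigma$, this gives $f_{*}(\Sigma)\subseteq\Sigma$.

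With both $\Phi(\Sigma)\subseteq\Sigma$ and $f_{*}(\Sigma)\subseteq\Sigma$ in hand, I would count the incidences $\{(E,F):E\in\Sigma,\;F\in\Phi(E)\}$ two ways. On one hand this equals $\sum_{E\in\Sigma}|\Phi(E)|$; on the other, each $F\in\Sigma$ contributes the unique pair $(f(F),F)$ (with $f(F)\in\Sigma$), so the total is $|\Sigma|$. Combined with $|\Phi(E)|\geq 1$, this forces $|\Phi(E)|=1$ for every $E\in\Sigma$. Writing $\Phi(E)=\{\sigma(E)\}$, the identity $f\circ\sigma=\id_{\Sigma}$ shows $\sigma$ is injective, hence a permutation of the finite set $\Sigma$. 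Since $f^{p}(E)=E$ implies $\sigma^{p}(E)=E$, we conclude $\Phi^{p}(E)=\{E\}$, i.e.\ $f^{-p}(E)=E$ as sets, with $m=p$.

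The main obstacle is the bootstrapping: the hypothesis provides only backward closure of $\Sigma$, and one must first establish periodicity by a pigeonhole argument on a preimage chain and then back-propagate to upgrade it to forward closure of $\Sigma$. Once this is achieved, the incidence count and the resulting single-valuedness of $\Phi$ are formal, and the final conclusion drops out.
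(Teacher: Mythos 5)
Your proof is correct, but it takes a longer route than necessary, and the paper's argument is a single direct counting step that renders your bootstrapping superfluous. The key observation you are dancing around is that the sets $\Phi(E)$ for $E\in\Sigma$ are pairwise disjoint (since $F\in\Phi(E)\cap\Phi(E')$ would force $E=f(F)=E'$), each is nonempty, and by hypothesis each is contained in $\Sigma$. Hence
\[
|\Sigma|\;\geq\;\Bigl|\bigsqcup_{E\in\Sigma}\Phi(E)\Bigr|\;=\;\sum_{E\in\Sigma}|\Phi(E)|\;\geq\;|\Sigma|,
\]
which already forces $|\Phi(E)|=1$ for every $E\in\Sigma$ and shows that $E\mapsto f^{-1}(E)$ is an injection, hence a permutation, of the finite set $\Sigma$; the lemma follows at once. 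This is precisely the paper's argument. Your preliminary steps (pigeonhole on a backward chain to get $f^{p}(E)=E$, then back-propagation to deduce forward closure $f_{*}(\Sigma)\subseteq\Sigma$) are not needed: you invoked forward closure only to justify that every $F\in\Sigma$ contributes a pair $(f(F),F)$ to the incidence count, but the weaker disjointness-plus-nonemptiness inequality above bypasses that entirely. The detour is harmless — it is correct and, as a by-product, gives an explicit period $p$ — but it is worth recognizing that the conclusion drops out of the finiteness of $\Sigma$ alone. One minor point of care, which you flagged correctly: the step $\Phi(E)\neq\emptyset$ uses finiteness of $f$ (so that $f^{-1}(E)$ is pure of codimension one); the paper states the lemma for a normal variety but applies it only in the projective setting where surjective endomorphisms are automatically finite.
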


\begin{proof}
Let $\Sigma = \{ E_1, \ldots , E_N \}$.
Since $f^{-1}(E_i)$ and $f^{-1}(E_j)$ have no common irreducible component, we have $f^{-1}(E_i)$ is irreducible for all $i$.
It means that $f^{-1}$ induces the one-to-one corresponding of $\Sigma$.
In particular, for some positive integer $m$, $f^{-m}$ is identity.
\end{proof}

\begin{defn}
A morphism $\mu \colon (\widetilde{X},\widetilde{\Delta}) \to (X, \Delta)$ of pairs is called a quasi-\'etale cover of $(X,\Delta)$ if
\begin{itemize}
    \item $\mu \colon \widetilde{X} \to X$ is a finite surjective morphism of normal projective varieties, 
    \item $\Delta$ is an effective $\Q$-Weil divisor on $X$, and
    \item $\widetilde{\Delta}=\mu^*\Delta - R_\mu$ is effective.
\end{itemize}
\end{defn}

\begin{rmk} Let $\mu \colon (\widetilde{X},\widetilde{\Delta}) \to (X, \Delta)$ be a quasi-\'etale cover of $(X,\Delta)$ and $K_X+\Delta$ is $\Q$-Cartier. Then
\begin{itemize}
    \item $K_{\widetilde{X}}+\widetilde{\Delta}= \mu^*(K_X+\Delta)$, and in particular,  $K_{\widetilde{X}}+\widetilde{\Delta}$ is $\Q$-Cartier, 
    \item $(X,\Delta)$ is klt if and only if $(\widetilde{X},\widetilde{\Delta})$ is klt by \cite[Proposition 5.20]{kollar-mori}, and
    \item if $\Delta=0$, then $\mu$ is \'etale in codimension one.
\end{itemize}
\end{rmk}

\begin{prop}\label{quasi-rtale preserves f-pairness}
We consider the following commutative diagram
\[
\xymatrix{
(\widetilde{X},\widetilde{\Delta}) \ar[r]^\mu \ar[d]_{\widetilde{f}} & (X,\Delta) \ar[d]^f  \\
(\widetilde{X},\widetilde{\Delta}) \ar[r]_\mu & (X, \Delta),
}
\]
where $\mu$ is a quasi-\'etale cover of $(X,\Delta)$, $f$ and $\widetilde{f}$ are surjective endomorphisms.
Then we have
\[
R_{\widetilde{\Delta}}=R_{\widetilde{f}}+\widetilde{\Delta}-\widetilde{f}^*\widetilde{\Delta}=\mu^*(R_f+\Delta-f^*\Delta)=\mu^*R_\Delta,
\]
and in particular, $(X,\Delta)$ is an $f$-pair if and only if $(\widetilde{X},\widetilde{\Delta})$ is an $\widetilde{f}$-pair.
\end{prop}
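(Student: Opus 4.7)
My plan is to prove the divisor identity $R_{\widetilde{\Delta}}=\mu^*R_{\Delta}$ by a direct computation using the chain rule for ramification divisors, and then observe that the ``in particular'' statement follows trivially because a finite surjective pullback preserves and reflects effectivity.

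The starting point is the standard ramification formula for a composition of finite surjective morphisms between normal varieties: if $p$ and $q$ are composable finite surjections, then $R_{p\circ q}=R_q+q^*R_p$. Applied to the two equal composites $f\circ\mu=\mu\circ\widetilde{f}$ given by the commutative square, this yields
\[
R_{\mu}+\mu^{*}R_{f}=R_{\widetilde{f}}+\widetilde{f}^{*}R_{\mu},
\]
which I would rearrange as $R_{\widetilde{f}}=R_{\mu}+\mu^{*}R_{f}-\widetilde{f}^{*}R_{\mu}$. This is really the only nontrivial input; everything else is bookkeeping.

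Next, I would substitute $\widetilde{\Delta}=\mu^{*}\Delta-R_{\mu}$ and the displayed expression for $R_{\widetilde{f}}$ into the definition
\[
R_{\widetilde{\Delta}}=R_{\widetilde{f}}+\widetilde{\Delta}-\widetilde{f}^{*}\widetilde{\Delta},
\]
using commutativity $\widetilde{f}^{*}\mu^{*}=\mu^{*}f^{*}$ on Weil divisors to write $\widetilde{f}^{*}\widetilde{\Delta}=\mu^{*}f^{*}\Delta-\widetilde{f}^{*}R_{\mu}$. After cancellation of the $R_\mu$ and $\widetilde{f}^{*}R_\mu$ terms, what remains is exactly
\[
\mu^{*}R_{f}+\mu^{*}\Delta-\mu^{*}f^{*}\Delta=\mu^{*}(R_{f}+\Delta-f^{*}\Delta)=\mu^{*}R_{\Delta},
\]
which is the desired equality.

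For the final equivalence, I would simply note that for the finite surjective morphism $\mu$ between normal varieties, a Weil divisor $D$ on $X$ is effective if and only if $\mu^{*}D$ is effective: one direction is immediate from the definition of pullback, and the other follows because every prime divisor $E\subset X$ has some preimage component $F\subset\widetilde{X}$ with positive ramification index $e_{F}\geq 1$, so $\mathrm{ord}_{F}(\mu^{*}D)=e_{F}\cdot\mathrm{ord}_{E}(D)$ has the same sign as $\mathrm{ord}_{E}(D)$. Applying this to $D=R_{\Delta}$ and using $\mu^{*}R_{\Delta}=R_{\widetilde{\Delta}}$ gives the $f$-pair equivalence. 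I do not anticipate any real obstacle here; the only care needed is to make sure that all the pullback operations are justified on Weil divisors for finite morphisms of normal varieties (via the ramification-weighted preimage formula), which is standard.
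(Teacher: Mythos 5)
Your proof is correct and follows exactly the same route as the paper: the chain rule $R_{f\circ\mu}=R_\mu+\mu^*R_f=R_{\widetilde f}+\widetilde f^*R_\mu=R_{\mu\circ\widetilde f}$ followed by substituting $\widetilde\Delta=\mu^*\Delta-R_\mu$ and cancelling. The only difference is that you spell out the (entirely standard) observation that $\mu^*$ preserves and reflects effectivity, which the paper leaves implicit.
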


\begin{proof}
First, since $f\circ \mu = \mu \circ \widetilde{f}$, we have
\[
R_\mu+\mu^*R_f=R_{\widetilde{f}}+\widetilde{f}^*R_\mu.
\]
Hence we have
\begin{eqnarray*}
R_{\widetilde{\Delta}}&=&R_{\widetilde{f}}+\widetilde{\Delta}-\widetilde{f}^*\widetilde{\Delta} \\
&=& R_{\widetilde{f}}+\mu^*\Delta-R_\mu- \widetilde{f}^*(\mu^*\Delta-R_\mu) \\
&=& \mu^*(R_f+\Delta-f^*\Delta)=\mu^*R_\Delta.
\end{eqnarray*}
\end{proof}

\begin{eg}\label{example; quasi-etale cover}
In Example \ref{example; f-pair}, $\mu \colon (E,0) \to (\P^1,\Delta)$ is a quasi-\'etale cover of $(\P^1,\Delta)$.
In particular. $(\P^1,\Delta)$ is an $h$-pair by Proposition \ref{quasi-rtale preserves f-pairness}.
Furthermore, in this case, $K_{\P^1}+\Delta$ is $\Q$-linearly trivial, so this is also an example of the following proposition.
\end{eg}

\begin{prop}[cf. Example \ref{example; quasi-etale cover}]\label{convering theorem}
Let $X$ be a normal $\Q$-factirial projective variety admitting an int-amplified endomorphism $f$ and $\Delta$ an effective $\Q$-Weil divisor on $X$ such that $(X,\Delta)$ is a klt $f$-pair.
Assume that $K_X+\Delta \sim_{\Q} 0$.
Then there exits a quasi-\'etale cover $\mu \colon (A,0) \to (X,\Delta) $ of $(X,\Delta)$ by an abelian variety $A$.
Furthermore, if $\phi \colon X \to X$ is a surjective endomorphism of $X$ such that $(X,\Delta)$ is a $\phi$-pair, then $\phi$ lifts to $A$.

\end{prop}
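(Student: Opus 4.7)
The plan is to build the abelian cover $\mu\colon A\to X$ in two stages: first a finite Galois cover $\nu\colon\widetilde X\to X$ which is a quasi-\'etale cover of the pair $(X,\Delta)$ with $\widetilde\Delta=0$ (so that $K_{\widetilde X}\sim_{\Q} 0$); then a further quasi-\'etale cover $A\to\widetilde X$ from an abelian variety, produced by the equivariant MMP.

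The starting point is the vanishing $R_\Delta=0$. Indeed, $R_\Delta$ is effective by hypothesis and
\[
R_\Delta \sim K_X+\Delta-f^{*}(K_X+\Delta)\sim_{\Q} 0
\]
since $K_X+\Delta\sim_{\Q}0$; an effective $\Q$-divisor $\Q$-linearly equivalent to zero on a projective normal variety must vanish. The resulting identity $f^{*}\Delta=R_f+\Delta$ translates, at any prime divisor $F$ with $f(F)=E$ and ramification index $r_F$, into
\[
1-\ord_F(\Delta)=r_F\bigl(1-\ord_E(\Delta)\bigr).
\]
After replacing $f$ by a high iterate, Lemma \ref{totally invariant prime divisors} renders each component of $\Supp(\Delta)$ totally $f$-invariant; combined with the local identity and an orbit analysis (using the int-amplified hypothesis to exclude totally invariant, unramified components with non-standard coefficients), every coefficient of $\Delta$ takes the orbifold form $(m_E-1)/m_E$ for some positive integer $m_E$. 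Classical orbifold covering theory then supplies a Galois finite cover $\nu\colon\widetilde X\to X$ ramified with index $m_E$ along each $E\in\Supp(\Delta)$ and \'etale elsewhere in codimension one, so that $\nu^{*}\Delta=R_\nu$, i.e.\ $\widetilde\Delta=0$. The cover $\widetilde X$ is $\Q$-factorial klt with $K_{\widetilde X}=\nu^{*}(K_X+\Delta)\sim_{\Q}0$, and by Proposition \ref{quasi-rtale preserves f-pairness} a suitable iterate of $f$ lifts to an int-amplified endomorphism $\widetilde f$ of $\widetilde X$.

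Applying Theorem \ref{equiMMP} to $(\widetilde X,\widetilde f)$: since $K_{\widetilde X}$ is numerically trivial, no $K_{\widetilde X}$-negative extremal ray exists, the MMP sequence has length zero, and $\widetilde X$ itself must be $\Q$-abelian. Hence there is a quasi-\'etale finite cover $\pi\colon A\to\widetilde X$ from an abelian variety $A$, and the composition $\mu:=\nu\circ\pi\colon A\to X$ is the required quasi-\'etale cover of the pair $(X,\Delta)$ by an abelian variety. For the furthermore part, the cover $\nu$ depends only on the pair $(X,\Delta)$ and not on the chosen endomorphism; hence any $\phi$ for which $(X,\Delta)$ is a $\phi$-pair lifts (after iteration) to $\widetilde X$, and then by the remark following Theorem \ref{equiMMP} lifts further to $A$.

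The main obstacle is the second step, namely proving that every coefficient of $\Delta$ is of standard orbifold form $(m-1)/m$. The local identity above immediately yields this whenever some $E\in\Supp(\Delta)$ has a preimage $F$ with $\ord_F(\Delta)=0$; the delicate case is when $\Supp(\Delta)$ is closed under $f^{-1}$, forcing $E$ to be totally $f$-invariant and $f$ to be unramified along $E$, in which case the standard form of $\ord_E(\Delta)$ must be extracted via log adjunction to $E$ together with the fact that the induced endomorphism $f|_E$ is still int-amplified.
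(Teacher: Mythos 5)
Your calculation that $R_\Delta=0$, and the resulting local identity
\[
1-\ord_F(\Delta)=r_F\bigl(1-\ord_E(\Delta)\bigr),
\]
together with the use of Lemma~\ref{totally invariant prime divisors} to reduce to totally $f$-invariant components, all match the paper's argument. But there are two genuine gaps.

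\textbf{The ``delicate case'' is not delicate --- it is vacuous.} You write that once $\Sigma$ (the non-standard components) is closed under $f^{-1}$, one is forced into the situation where $E$ is totally $f$-invariant and $f$ is \emph{unramified} along $E$, and you propose to resolve this via log adjunction to $E$ and the induced endomorphism $f|_E$. This is where the argument goes astray. For an int-amplified endomorphism, a totally invariant prime divisor $E$ always satisfies $(f^l)^*E=r'E$ with $r'>1$; this is exactly \cite[Theorem 3.3]{meng17}, which the paper invokes at this point. Plugging $r'>1$ into $R_\Delta=0$ gives $0=(r'-1)(1-\ord_E(\Delta))$, hence $\ord_E(\Delta)=1$, which contradicts the klt hypothesis. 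So the ``unramified along $E$'' case you are trying to handle cannot occur; the non-standard component simply does not exist. Your proposed log-adjunction detour is both unnecessary and incomplete --- you do not explain how adjunction to a divisor with non-integral coefficient, combined with int-amplification of $f|_E$, would pin down the coefficient, and it is unclear that it would.

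\textbf{The two-stage covering construction is not ``classical.''} You propose first building a Galois cover $\nu\colon\widetilde X\to X$ with $\nu^*\Delta=R_\nu$ so that $\widetilde\Delta=0$, and only then applying the equivariant MMP. In dimension one or for toric pairs this is harmless, but for a general klt pair $(X,\Delta)$ with standard coefficients the existence of such a global orbifold uniformization is a nontrivial theorem, not classical covering theory --- the orbifold can a priori be ``bad,'' and in higher dimension the existence of such covers is part of the deep Beauville--Bogomolov-type structure theory (Greb--Kebekus--Peternell, Druel, et al.) or, in this paper's setting, of \cite[Theorem 1.6]{yoshikawa19}. The paper deliberately does \emph{not} go through an intermediate $\widetilde X$ with trivial boundary: once $\Delta$ is shown to have standard coefficients, it appeals directly to Yoshikawa's theorem, which constructs the abelian cover and the lift of any surjective endomorphism in a single stroke, using the int-amplified endomorphism essentially. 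Your Step~6 (applying Theorem~\ref{equiMMP} to $\widetilde X$ once $\widetilde\Delta=0$) is fine as far as it goes, but it rests on the unproven existence of $\widetilde X$.

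In short: the broad outline (force $R_\Delta=0$, deduce standard coefficients, obtain the abelian cover) agrees with the paper, but you are missing the key input \cite[Theorem 3.3]{meng17} that closes the coefficient argument, and you replace the paper's black-box reference for the cover with an appeal to a fact that is itself a theorem, not a triviality.
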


\begin{proof}
By the proof of \cite[Theorem 1.6]{yoshikawa19}, it is enough to show that $\Delta$ has standard coefficients, that is, for any prime divisor $E$ on $X$, $\Delta$ satisfies $\ord_E(\Delta)=\frac{m-1}{m}$ for some positive integer $m$.
Let $\Sigma$ be the set of all prime divisors $E$ on $X$ such that $\ord_E(\Delta)$ is not standard. Since $0$ is standard, $\Sigma$ is a finite set.
We take $E \in \Sigma$ and an irreducible component $F$ of $f^{-1}(E)$.

Suppose that $F \notin \Sigma$, then $\ord_F(\Delta)=\frac{m-1}{m}$ for some positive integer $m$.
We note that
\[
0 \leq R_{\Delta}=R_f +\Delta-f^*\Delta \sim K_X+\Delta-f^*(K_X+\Delta)
\sim_{\Q} 0,
\]
hence, we have $R_{\Delta}=0$.
In particular, we have
\[
0=\ord_F(R_f+\Delta-f^*\Delta)=r-1 + \frac{m-1}{m} - r \cdot \ord_E(\Delta),
\]
where $r=\ord_F(f^*(E))$.
It means that
\[
\ord_E(\Delta)=\frac{rm-1}{rm},
\]
it is contradiction to $E \in \Sigma$.

Hence $F$ is contained in $\Sigma$ and $\Sigma$ satisfies the assumption of Lemma \ref{totally invariant prime divisors}.
Next suppose that there exists an element $E \in \Sigma$.
Then we have $f^{-l}(E)=E$ as sets for some positive integer $l$.
Therefore, we have
\[
0=\ord_E(R_{f^l}+\Delta-(f^l)^*\Delta)=r'-1+\ord_E(\Delta)-r'\ord_E(\Delta),
\]
where $(f^l)^*E=r'E$.
Since $f^l$ is int-amplified, $r'$ is larger than one by \cite[Theorem 3.3]{meng17}, so we have $\ord_E(\Delta)=1$ .
However, it is contradiction to the fact that $(X,\Delta)$ is klt.
In conclusion, we obtain $\Sigma = \emptyset$ and $\Delta$ has standard coefficients.

\end{proof}

\begin{thm}[\textup{cf. \cite[Lemma 4.12]{matsuzawa-yoshikawa19}}]\label{lifting of cover of abelian variety}
Let $(X,\Delta) \dashrightarrow \cdots \dashrightarrow (W,\Delta_W)$ be a maximal sequence of steps of MMP of canonical bundle formula type and $f$ be an int-amplified endomorphism of $X$ such that $(X,\Delta)$ is an $f$-pair.
Then $\pi_X \colon X \dashrightarrow W$ is a morphism and we obtain the following commutative diagram
\[
\xymatrix{
(\widetilde{X},\widetilde{\Delta}) \ar[r]^-{\widetilde{\pi}} \ar[d]_-{\mu_X} & (A,0) \ar[d]^-{\mu_W} \\
(X,\Delta) \ar[r]_-{\pi_X} & (W,\Delta_W), \\
}
\]
where
\begin{itemize}
    \item $A$ is an abelian variety,
    \item $\mu_X, \mu_W$ are quasi-\'etale covers of $(X,\Delta), (W,\Delta_W)$, respectively,
    \item $\widetilde{X}$ is the normalization of the main component of $X \times_W A$, and
    \item $R_{\Delta}$ has no vertical components of $\pi_X$.
\end{itemize}
Furthermore, if $\phi$ is a surjective endomorphism of $X$ such that $(X,\Delta)$ is a $\phi$-pair, then there exists the following commutative diagram
\[
\xymatrix{
X  \ar[d]_{\phi^m}& \widetilde{X} \ar[l]_{\mu_X} \ar[r]^{\widetilde{\pi}} \ar[d]^{\widetilde{\phi}} & A \ar[d]^{\phi_A} \\
X  & \widetilde{X} \ar[l]^{\mu_X} \ar[r]_{\widetilde{\pi}} & A,
}
\]
for some positive integer $m$, where $\widetilde{\phi}$ and $\phi_A$ are surjective endomorphism.
\end{thm}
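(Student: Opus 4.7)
The plan is to reduce to Theorem \ref{construction of a towers of mfs} and Proposition \ref{convering theorem}, and then to force $\pi_X$ to be a morphism by lifting to an abelian cover and applying Zariski's main theorem. First, I apply Theorem \ref{construction of a towers of mfs} to the given maximal sequence. After replacing $f$ by some iterate, this produces an $f$-equivariant birational contraction $\mu \colon (X,\Delta) \dashrightarrow (X',\Delta')$ and a tower of Mori fiber spaces of canonical bundle formula type $(X',\Delta') \to (X_1',\Delta_1') \to \cdots \to (X_r',\Delta_r') \to (W,\Delta_W)$. By Remark \ref{steps of MMP of canonical bundle formula type preserves f-pair}, the induced endomorphism $g$ on $W$ makes $(W,\Delta_W)$ a $g$-pair, and maximality of the sequence forces $K_W+\Delta_W \sim_\Q 0$. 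Proposition \ref{convering theorem} then supplies a quasi-\'etale cover $\mu_W \colon (A,0) \to (W,\Delta_W)$ by an abelian variety $A$, together with a lift of some iterate of $g$.

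The crux is to show that $\pi_X = \pi' \circ \mu \colon X \dashrightarrow W$ extends to a morphism. Let $U \subset X$ be the open locus of definition of $\pi_X$; because $W$ is projective and $X$ is normal, $X \setminus U$ has codimension at least two. Let $\widetilde{U}$ be a connected component of the normalization of $U \times_W A$, so that $\widetilde{U} \to U$ is quasi-\'etale and carries a morphism to $A$. Normalizing $X$ in $K(\widetilde{U})$ produces a finite cover $\widetilde{X}_0 \to X$ which is quasi-\'etale because it is \'etale over $U$ and the rest of the branch locus lies in the codimension-two set $X \setminus U$. Since $\widetilde{X}_0$ is normal and projective and $A$ is an abelian variety, the rational map $\widetilde{X}_0 \dashrightarrow A$ extends to a morphism $\widetilde{\pi} \colon \widetilde{X}_0 \to A$. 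Now let $\Gamma \subset X \times W$ be the closure of the graph of $\pi_X$, with projections $p_1 \colon \Gamma \to X$ and $p_2 \colon \Gamma \to W$. The assignment $\widetilde{x} \mapsto (\mu_X(\widetilde{x}), (\mu_W \circ \widetilde{\pi})(\widetilde{x}))$ defines a surjective morphism $\widetilde{X}_0 \to \Gamma$ with $p_1 \circ (\widetilde{X}_0 \to \Gamma) = \mu_X$, so the fibers of $p_1$ are contained in the images of the finite fibers of $\mu_X$, hence are finite. Therefore $p_1$ is a proper birational morphism with finite fibers to the normal variety $X$, and Zariski's main theorem forces $p_1$ to be an isomorphism. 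Setting $\pi_X = p_2 \circ p_1^{-1}$ yields the required morphism.

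With $\pi_X$ now a morphism, the $\widetilde{X}$ described in the statement coincides with $\widetilde{X}_0$ by uniqueness of normal extensions; $\mu_X$ is quasi-\'etale by base change of $\mu_W$; and the effectivity of $\widetilde{\Delta} = \mu_X^* \Delta - R_{\mu_X}$ is checked at codimension one using the equality $\mu_W^* \Delta_W = R_{\mu_W}$ (coming from Proposition \ref{convering theorem} applied to $(A,0) \to (W,\Delta_W)$) together with the formula for $\Delta_W$ along the tower. For the absence of vertical components of $R_\Delta$, I would iterate Proposition \ref{mfs of cbf type for f-pairs} along $X' \to X_1' \to \cdots \to W$ to deduce that $R_{\Delta'}$ has no vertical components over $W$, and then handle any $\mu$-exceptional contributions to $R_\Delta$ by a relative Picard rank one analysis at each divisorial contraction in $\mu$, in the spirit of Lemma \ref{ramification divisor is relative ample}, exploiting the $f$-invariance supplied by Theorem \ref{equivariant extremal ray contraction}.

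Finally, for the lifting assertion: given a surjective endomorphism $\phi$ of $X$ for which $(X,\Delta)$ is a $\phi$-pair, Theorem \ref{equivariant extremal ray contraction} forces the entire chain $X \dashrightarrow X' \to \cdots \to W$ to be $\phi^m$-equivariant for some $m>0$, so that $\phi^m$ descends to an endomorphism $g_\phi$ of $W$ preserving $\Delta_W$. The ``furthermore'' clause of Proposition \ref{convering theorem} lifts $g_\phi$ to an endomorphism $\phi_A$ of $A$, and the equivariant normalization of the main component of $X \times_W A$ then provides $\widetilde{\phi} \colon \widetilde{X} \to \widetilde{X}$ completing the required commutative diagram. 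I expect the main obstacle to be in the second paragraph, namely descending the everywhere-defined morphism to the abelian variety down to a morphism from $X$ itself via Zariski's main theorem; verifying the vertical-components clause for $\mu$-exceptional divisors in the third paragraph is the secondary technical point.
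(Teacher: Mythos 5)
Your approach differs from the paper's in a structurally interesting way (the paper proves the theorem by induction on the length of the MMP sequence, treating each step $\rho$ as a flip, a divisorial contraction, or a Mori fiber space, and invoking \cite[Lemma 4.13]{matsuzawa-yoshikawa19} to descend the morphism after lifting over a flip; you instead go directly to $W$ via Theorem~\ref{construction of a towers of mfs} and then descend the morphism by a Zariski's-main-theorem argument on the graph of $\pi_X$). The Zariski argument itself is sound once $\widetilde{\pi}\colon \widetilde{X}_0\to A$ is known to be a morphism, and it is a clean alternative to the paper's citation.

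However, there is a genuine gap precisely at the step that you flag as the crux. You assert that ``Since $\widetilde{X}_0$ is normal and projective and $A$ is an abelian variety, the rational map $\widetilde{X}_0\dashrightarrow A$ extends to a morphism.'' This is false at that level of generality: rational maps from normal projective varieties to abelian varieties do not extend in general (e.g.\ from a cone over a positive-genus curve one can cook up rational maps to its Jacobian that are not morphisms). What is actually needed, and what the paper uses, is that $\widetilde{X}_0$ has rational singularities, and then \cite[Lemma 8.1]{kawamata85} gives the extension. The paper obtains rational singularities from the klt-ness of $(\widetilde{X},\widetilde{\Delta})$ via \cite[Theorem 5.22]{kollar-mori}, and klt-ness of the cover in turn requires that $\widetilde{\Delta}=\mu_X^*\Delta-R_{\mu_X}$ be effective, i.e.\ that $\mu_X$ is a quasi-\'etale cover of the \emph{pair} $(X,\Delta)$. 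In your writeup you defer the verification of $\widetilde{\Delta}\ge 0$ to the paragraph ``With $\pi_X$ now a morphism \ldots'', so you need effectivity to get the morphism, but you plan to prove effectivity only after having the morphism — a circularity. (Relatedly, the phrase ``\'etale over $U$'' is not correct: $\widetilde{U}\to U$ is ramified over $\pi_X^{-1}(\Supp\Delta_W)$, since $\mu_W$ ramifies exactly along $\Supp\Delta_W$; what you want is that $\mu_X$ is a quasi-\'etale cover of the pair $(X,\Delta)$, which is the effectivity statement, not an \'etale-in-codimension-one statement.) The paper dissolves this circularity by its induction: at a flip step $\widetilde{X}\dashrightarrow\widetilde{Y}$ is an isomorphism in codimension one, so effectivity of $\widetilde{\Delta}$ is inherited from the inductive hypothesis for $\widetilde{Y}$ before one ever needs rational singularities; at divisorial-contraction and Mori-fiber-space steps the paper proves effectivity directly by a totally-invariant-divisor argument (its Claim~\ref{ramification divisor has no vertical component} and the paragraph following it), not by the vague ``check at codimension one using $\mu_W^*\Delta_W=R_{\mu_W}$'' that you propose. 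To repair your argument you would either need to import the paper's inductive structure, or give a separate a~priori proof that $\widetilde{X}_0$ has rational singularities (equivalently, establish $\widetilde{\Delta}\ge 0$ first, e.g.\ by adapting the paper's int-amplified/totally-invariant contradiction argument to divisors that are $\widetilde{\pi}$-vertical) before invoking the extension to $A$.
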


\begin{proof}
First, we prove the last assertion.
Let $\phi$ be a surjective endomorphism of $X$ such that $(X,\Delta)$ is a $\phi$-pair.
By Remark \ref{steps of MMP of canonical bundle formula type preserves f-pair}, $\phi^m$ induces the surjective endomorphism $\psi$ of $W$ such that $(W,\Delta_W)$ is a $\psi$-pair.
By Proposition \ref{convering theorem}, $\psi$ lifts to $A$ denoted by $\widetilde{\psi}$.
Since $\widetilde{X}$ is the normalization of the main component of $X \times_W A$, $\widetilde{X}$ has an endomorphism $\widetilde{\phi}$ induced by $\phi^m, \psi$ and $\widetilde{\psi}$.
This satisfies the conditions in Theorem \ref{lifting of cover of abelian variety}.

Next, we prove Theorem \ref{lifting of cover of abelian variety} by the induction on the length of the maximal sequences of steps of MMP of canonical bundle formula type.
We note that $(K_W+\Delta_W) \sim_{\Q} 0$ and $(W,\Delta_W)$ is a $g$-pair for some int-amplified endomoprhism $g$ on $W$ by Remark \ref{steps of MMP of canonical bundle formula type preserves f-pair}. 
Hence by Proposition \ref{convering theorem}, we can construct a quasi-\'etale cover of $(W,\Delta_W)$ by an abelian variety $A$.
This is the proof of the case where the length is equal to $0$.

Let $\rho \colon (X,\Delta) \dashrightarrow (Y,\Gamma)$ be the first step of the above sequence.
We may assume that $\rho$ is $f$-equivariant replacing $f$ by some iterate.
By the induction hypothesis, we obtain the following commutative diagram
\[
\xymatrix{
(\widetilde{Y},\widetilde{\Gamma}) \ar[r]^-{\widetilde{\pi}_Y} \ar[d]_-{\mu_Y} & (A,0) \ar[d]^-{\mu_W} \\
(Y,\Gamma) \ar[r]_-{\pi_Y} & (W,\Delta_W), \\
}
\]
where
\begin{itemize}
    \item $A$ is an abelian variety,
    \item $\mu_Y, \mu_W$ are quasi-\'etale covers of $(Y,\Gamma), (W,\Delta_W)$, respectively,
    \item $\widetilde{Y}$ is the normalization of the main component of $Y \times_W A$, and
    \item $R_{\Gamma}$ has no vertical components of $\pi_Y$.
\end{itemize}

First, we consider the case where $\rho$ is a flip.
Let $\widetilde{X}$ be the normalization of $X$ in $K(\widetilde{Y})$. 
Since $\widetilde{X} \dashrightarrow \widetilde{Y}$ is isomorphism in codimension one,

\[
\mu_X \colon (\widetilde{X}, \widetilde{\Delta}) \to (X,\Delta)
\]
is a quasi-\'etale cover of $(X,\Delta)$.
In particular, $(\widetilde{X}, \widetilde{\Delta})$ is klt,
so $\widetilde{X}$ has at worst rational singularities by \cite[Theorem 5.22]{kollar-mori}.
By \cite[Lemma 8.1]{kawamata85}, 
\[
\widetilde{\pi}_X \colon \widetilde{X} \dashrightarrow A
\]
is a morphism.
By \cite[Lemma 4.13]{matsuzawa-yoshikawa19}, $\pi_X$ is also a morphism.
The other statements follow from the smallness of $\rho$ and the proof of \cite[Lemma 4.12]{matsuzawa-yoshikawa19}.

Next, we consider the case where $\rho$ is a divisorial contraction or a Mori fiber space.
We obtain the following commutative diagram
\[
\xymatrix{
\widetilde{X} \ar[r]^{\widetilde{\rho}} \ar[d]_{\mu_X} & \widetilde{Y} \ar[r]^{\widetilde{\pi}_Y} \ar[d]_{\mu_Y} & A \ar[d]^{\mu_W}  \\
X \ar[r]_{\rho} & Y \ar[r]_{\pi_Y} & W, \\
}
\]
where $\widetilde{X}$ is the normalization of the main component of $X \times_W A$.
\begin{claim}\label{ramification divisor has no vertical component}
$R_{\Delta}$ has no vertical components.
\end{claim}
\begin{proof}
If $\rho$ is a divisorial contraction, then there exists the unique exceptional prime divisor $E$ on $X$.
It is enough to show that $\pi_X(E)=W$.
It follows from the fact that $\pi_X(E)$ is totally invarinat (cf. \cite[Lemma 4.12]{matsuzawa-yoshikawa19}).

If $\rho$ is a Mori fiber space, then
\[
\rho \colon (X,\Delta) \to (Y,\Gamma)
\]
is a Mori fiber space of canonical bundle formula type since $\rho$ is in steps of MMP of canonical bundle formula type.
By Proposition \ref{mfs of cbf type for f-pairs}, $R_\Delta-\rho^*R_{\Gamma}$ has no vertical component of $\rho$.
Since $R_\Gamma$ has no vertical components of $\pi_Y$, $R_\Delta$ has no vertical component of $\pi_X$.
\end{proof}

Next, we prove that $\widetilde{\Delta}=(\mu_X)^*\Delta-R_{\mu_X}$ is effective, that is, $\mu_X$ is a quasi-\'etale cover of $(X,\Delta)$.
Let $\Sigma$ be the set of all prime divisors $E$ on $\widetilde{X}$ with $\ord_E(\widetilde{\Delta}) <0$.
We may assume that $f$ lifts to $\widetilde{X}$ denoted by $\widetilde{f}$ replacing $f$ by some iterate.
Suppose that $\Sigma \neq \emptyset$, and take $E \in \Sigma$ and an irreducible component $F$ of $f^{-1}(E)$.
Then $\ord_E(R_{\mu_X}) >0$.
By the construction of $\widetilde{X}$, $R_{\mu_X}$ has only vertical components of $\widetilde{\pi}_X$ (cf. \cite[Lemma 4.12]{matsuzawa-yoshikawa19}).
Hence $E$ is a vertical component, so $F$ is alzo a vertical component.
Let
\[
R_{\widetilde{\Delta}}=R_{\widetilde{f}}+\widetilde{\Delta}-\widetilde{f}^*\widetilde{\Delta}.
\]
By Proposition \ref{quasi-rtale preserves f-pairness}, we obtain $(\mu_X)^*R_\Delta=R_{\widetilde{\Delta}}$,
in particular, it is effective and has no vertical component of $\widetilde{\pi}_X$.
Hence, $\ord_F(R_{\widetilde{\Delta}})=0$ and
\[
\ord_F(\widetilde{\Delta}) \leq \ord_F(R_{\widetilde{f}}+\widetilde{\Delta}) = \ord_F(\widetilde{f}^*\widetilde{\Delta}) =r \cdot \ord_E(\widetilde{\Delta}) < 0,
\]
where $r=\ord_F(\widetilde{f}^*(E))$.
By Lemma \ref{totally invariant prime divisors}, we may assume that $\widetilde{f}^*(E)=r'E$ for some $r'>1$ replacing $f$ by some iterate.
Then we have
\[
\ord_E(\widetilde{\Delta}) \leq r'\ord_E(\widetilde{\Delta}),
\]
but it is contradiction to the fact that $\ord_E(\widetilde{\Delta})<0$.
It means that $\Sigma = \emptyset$ and $\widetilde{\Delta}$ is effective.
\end{proof}

\section{Proof of the main theorem}
In this section, we prove the main theorem.

\begin{prop}[\textup{cf. \cite[Lemma 1.1]{fujino-gongyo}}]\label{quasi-etale preserves fano ytpe}
Let $\mu \colon (Y,\Gamma) \to (X,\Delta)$ be a quasi-\'etale cover of $(X,\Delta)$ and $b \colon X \to B$ a surjective morphism of projective varieties.
Then $(X,\Delta)$ is of Fano type over $B$ if and only if
 $(Y,\Gamma)$ is of Fano type over $B$.
\end{prop}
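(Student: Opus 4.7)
The strategy is to use the characterization from Proposition \ref{first property of Fano type}(3): $(Z,\Theta)$ is of Fano type over $B$ iff there exists an effective $\Q$-Weil divisor $\Omega$ on $Z$, big relative to the structure morphism $Z\to B$, with $(Z,\Theta+\Omega)$ klt and $K_Z+\Theta+\Omega\equiv_B 0$. I would transport such a boundary across the cover $\mu$ in each direction, the forward by pullback and the reverse by the Fujino-Gongyo trace construction.

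For the forward direction, take $\Omega$ on $X$ as above and set $\Omega_Y:=\mu^*\Omega$. Then $\Omega_Y$ is effective and $(b\circ\mu)$-big, since pullback by a finite surjective morphism preserves bigness. The identity $K_Y+\Gamma=\mu^*(K_X+\Delta)$ built into the definition of a quasi-\'etale cover of pairs yields
\[
K_Y+\Gamma+\Omega_Y=\mu^*(K_X+\Delta+\Omega)\equiv_B 0,
\]
and klt-ness of $(Y,\Gamma+\Omega_Y)$ follows from \cite[Proposition 5.20]{kollar-mori} applied to $\mu$. Proposition \ref{first property of Fano type} then gives $(Y,\Gamma)$ of Fano type over $B$.

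For the reverse direction, I would apply the trace construction of \cite[Lemma 1.1]{fujino-gongyo}, the same tool already used in the proof of Lemma \ref{Fano type if the ramification divisor contains ample div}. Pick $\Omega_Y$ on $Y$ with $(Y,\Gamma+\Omega_Y)$ klt, $\Omega_Y$ $(b\circ\mu)$-big, and $K_Y+\Gamma+\Omega_Y\equiv_B 0$. Set $d:=\deg\mu$ and $\Omega_X:=\frac{1}{d}\mu_*\Omega_Y$. Using $R_\mu+\Gamma=\mu^*\Delta$ and the projection formula,
\[
\frac{1}{d}\mu_*(R_\mu+\Gamma+\Omega_Y)=\Delta+\Omega_X\quad\text{and}\quad\frac{1}{d}\mu_*(K_Y+\Gamma+\Omega_Y)=K_X+\Delta+\Omega_X,
\]
so $K_X+\Delta+\Omega_X\equiv_B 0$, since finite surjective pushforward preserves numerical equivalence by the projection formula on test curves. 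The divisor $\Omega_X$ is effective, and $b$-big because finite surjective pushforward of a big divisor remains big (after passing to a Galois closure if needed, $\mu_*A$ pulls back to a sum of ample translates, hence is big). Klt-ness of $(X,\Delta+\Omega_X)$ is precisely the output of \cite[Lemma 1.1]{fujino-gongyo} applied to the klt pair $(Y,\Gamma+\Omega_Y)$, since the formula above identifies the trace divisor with $\Delta+\Omega_X$. Proposition \ref{first property of Fano type} then yields $(X,\Delta)$ of Fano type over $B$.

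The main obstacle is the klt-ness of the trace pair $(X,\Delta+\Omega_X)$, which requires controlling log discrepancies downstairs via those upstairs through a finite pushforward whose branching is recorded in $R_\mu$. This is exactly the content of \cite[Lemma 1.1]{fujino-gongyo}, so the only genuine work is the bookkeeping identity $\mu_*(R_\mu+\Gamma)=d\Delta$ (which falls out of $\Gamma=\mu^*\Delta-R_\mu$ and $\mu_*\mu^*\Delta=d\Delta$) that matches the trace output with $\Delta+\Omega_X$.
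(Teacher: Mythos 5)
Your argument is correct and follows essentially the same route as the paper: for the forward direction you pull back the boundary and descend klt via \cite[Proposition 5.20]{kollar-mori}, and for the reverse direction you form the Fujino--Gongyo trace $\deg(\mu)^{-1}\mu_*(R_\mu+\Gamma+D_Y)-\Delta$ and observe via $\mu_*(R_\mu+\Gamma)=\deg(\mu)\Delta$ that it equals $\deg(\mu)^{-1}\mu_*D_Y$, exactly as the paper does. The only (inessential) divergence is in how you justify relative bigness of the traced divisor: you invoke that finite pushforward preserves bigness, whereas the paper gets it slightly more directly by noting $-(K_X+\Delta)$ is $B$-big because its pullback $-(K_Y+\Gamma)$ is, which avoids the Galois-closure detour.
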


\begin{proof}
First, we assume that $(X,\Delta)$ is of Fano type over $B$.
Since $-(K_X+\Delta)$ is big over $B$, then
\[
-(K_Y+\Gamma)=-\mu^*(K_X+\Delta)
\]
is big over $B$.
By Proposition \ref{first property of Fano type}, there exists an effective $\Q$-Weil divisor $D$ on $X$ such that $(X,\Delta+D)$ is klt and $K_X+\Delta+D \equiv_{B} 0$.
Then $K_Y+\Gamma+\mu^*D \equiv_{B} 0$ and $(Y,\Gamma+\mu^*D)$ is klt.
By Proposition \ref{first property of Fano type}, $(Y,\Gamma)$ is of Fano type over $B$.

Next, we assume that $(Y,\Gamma)$ is of Fano type over $B$.
Then by the same argument as above, $-(K_X+\Delta)$ is big over $B$ and
we can take an effective $\Q$-Weil divisor $D_Y$ on $Y$ such that $K_Y+\Gamma+D_Y \equiv_B0$ and $(Y,\Gamma+D_Y)$ is klt.
Let
\[
D_X=\deg(\mu)^{-1}\mu_*(R_\mu+\Gamma+D_Y)-\Delta.
\]
Since $\Gamma=\mu^*\Delta-R_\mu$, we have
\[
D_X=\deg(\mu)^{-1}\mu_*D_Y.
\]
By the proof of \cite[Lemma 1.1]{fujino-gongyo}, $K_X+\Delta+D_X \equiv_B0$ and $(X,\Delta+D_X)$ is klt, and in particular, $(X,\Delta)$ is of Fano type over $B$. 
\end{proof}

\begin{prop}\label{equivariant birational map preserves fano type}
We consider the following commutative diagram
\[
\xymatrix{
f \acts X \ar@{-->}[rr]^{\pi} \ar[rd]_-{b_X} & & Y \racts g \ar[ld]^{b_Y} \\
& h \acts B,  &  \\
}
\]
where
\begin{itemize}
    \item $X$, $Y$, $B$ are normal projective varieties,
    \item $f$, $g$, $h$ are int-amplified endomorphisms, and
    \item $\pi$ is a biratinal map.
\end{itemize}
Let $\Delta$ be an effective $\Q$-Weil divisors on $X$ such that $(X,\Delta)$ is an $f$-pair.
Assume that $(Y,\Gamma)$ is of Fano type over $B$ and $(Y,\Gamma)$ is a $g$-pair, where $\Gamma=\pi_*\Delta$.
Then $(X,\Delta)$ is also of Fano type over $B$.
\end{prop}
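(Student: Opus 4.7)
The plan is, via Proposition \ref{first property of Fano type}, to construct on $X$ an effective $b_X$-big $\Q$-Weil divisor $\Omega_X$ with $(X,\Delta+\Omega_X)$ klt and $K_X+\Delta+\Omega_X\sim_{\Q,B}0$. We transport such a divisor from $Y$ through a common resolution of $\pi$ and then absorb the resulting exceptional corrections using the Fujino--Gongyo iteration appearing in the proof of Lemma \ref{Fano type if the ramification divisor contains ample div}, now driven by the $f$-pair structure on $X$.

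First, by Proposition \ref{first property of Fano type} applied to $(Y,\Gamma)$, pick an effective $b_Y$-big $\Q$-Weil divisor $\Omega_Y$ with $(Y,\Gamma+\Omega_Y)$ klt and $K_Y+\Gamma+\Omega_Y\sim_{\Q,B}0$, chosen so that no component of $\Omega_Y$ is $\pi^{-1}$-exceptional. By Proposition \ref{first properties of int-ampl endo}, replace $f$ and $g$ by a common iterate and take an equivariant common log resolution $p\colon W\to X$ and $q\colon W\to Y$ of $\pi$, with an int-amplified lift $h\colon W\to W$. Writing the klt decomposition $q^*(K_Y+\Gamma+\Omega_Y)=K_W+q^{-1}_*(\Gamma+\Omega_Y)+F_q$ and pushing forward by $p_*$ (using $\pi^{-1}_*\Gamma=\Delta-\Delta^{\mathrm{exc}}$, where $\Delta^{\mathrm{exc}}$ is the $\pi$-exceptional part of $\Delta$) yields
\[
K_X+\Delta+\pi^{-1}_*\Omega_Y\;\sim_{\Q,B}\;E,\qquad E:=\Delta^{\mathrm{exc}}-p_*F_q,
\]
a $\Q$-divisor supported on the $\pi$-exceptional prime divisors of $X$; note that $\pi^{-1}_*\Omega_Y$ is effective and $b_X$-big, while $E$ may carry coefficients of either sign.

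Set $D^{(0)}:=\pi^{-1}_*\Omega_Y-E$ so that $K_X+\Delta+D^{(0)}\sim_{\Q,B}0$, and apply the Fujino--Gongyo iteration (cf.\ the proof of Lemma \ref{Fano type if the ramification divisor contains ample div}):
\[
D^{(k+1)}:=d^{-1}f_*\bigl(R_f+\Delta+D^{(k)}\bigr)-\Delta=d^{-1}f_*\bigl(R_\Delta+D^{(k)}\bigr),\qquad d=\deg f.
\]
As in loc.\ cit., each step preserves $K_X+\Delta+D^{(k)}\sim_{\Q,B}0$ and the klt condition, and unwinding the recursion gives
\[
D^{(k)}=\sum_{j=1}^{k}d^{-j}f^{j}_*R_\Delta\;+\;d^{-k}f^{k}_*D^{(0)}.
\]
The first sum is effective since $R_\Delta\geq 0$, while the error term $d^{-k}f^{k}_*D^{(0)}$ decays coefficient-wise: the eigenvalues of $f^*$ on $N^1(X)$ exceed $1$ in modulus by int-amplification (see \cite{meng17}), and on any prime divisor totally invariant under $f$ the ratio of decay is $1/r_F$ with $r_F>1$ the ramification index. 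By $\pi$-equivariance together with Lemma \ref{totally invariant prime divisors}, every prime divisor in $\Supp E$ is totally invariant under some iterate of $f$; at each such $F$ with $\ord_F(\Delta)<1$ we have $\ord_F(R_\Delta)=(r_F-1)(1-\ord_F(\Delta))>0$ (as in the proof of Proposition \ref{convering theorem}), so the positive accumulation of $R_\Delta$ on these invariant divisors eventually dominates the decaying error and $D^{(k)}$ is effective for $k$ sufficiently large.

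For such $k$, $\Omega_X:=D^{(k)}$ is effective, $b_X$-big (bigness being preserved under the finite self-morphism $f$, which is $b_X$-equivariant by hypothesis, and $\Omega_X$ contains $d^{-k}f^{k}_*\pi^{-1}_*\Omega_Y$), and satisfies $(X,\Delta+\Omega_X)$ klt and $K_X+\Delta+\Omega_X\sim_{\Q,B}0$; Proposition \ref{first property of Fano type} then gives $(X,\Delta)$ of Fano type over $B$. The main obstacle will be making rigorous the effectiveness of $D^{(k)}$: one must show that on every $F\in\Supp E$ the positive sum $\sum_{j=1}^{k}d^{-j}\ord_F(f^{j}_*R_\Delta)$ dominates $|d^{-k}\ord_F(f^{k}_*D^{(0)})|$ for $k$ large, which hinges on ensuring $\ord_F(R_\Delta)>0$ for every such $F$---a consequence of the $f$-pair inequality, the total invariance of $\Supp E$, and the ramification bound from int-amplification, together with a judicious choice of $\Omega_Y$ placing the negative support of $E$ inside $\Supp R_\Delta$.
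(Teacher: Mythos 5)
Your proposal is essentially the paper's proof: push forward a Fano-type auxiliary divisor from $Y$ to get a (possibly non-effective) $D^{(0)}$ with $K_X+\Delta+D^{(0)}\sim_{\Q,B}0$, then damp the negative exceptional coefficients via the Fujino--Gongyo iteration $D^{(k+1)}=d^{-1}f_*(R_\Delta+D^{(k)})$. Two points where you diverge from, or fall short of, the paper's argument.

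First, you ask for an ``equivariant common log resolution'' of $\pi$ carrying an int-amplified lift. Such a thing does not exist in general: one cannot lift an endomorphism across a resolution of singularities. The paper sidesteps this by passing to the normalization of the graph of $\pi$, which is a canonical construction that the endomorphisms of $X$ and $Y$ do lift to (after an iterate), and then by Proposition \ref{fano type birational contraction} reduces at once to the case that $\pi$ is a morphism. Note that your computation of $E=\Delta^{\mathrm{exc}}-p_*F_q$ only needs some normal common birational model, not a log resolution, so this is fixable --- but as written the step is wrong, and once you reduce to $\pi$ a morphism the entire detour through $W$ becomes unnecessary.

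Second, and more importantly, the effectiveness of $D^{(k)}$ for large $k$ --- which you flag at the end as the remaining obstacle --- is not an obstacle in the paper but a one-line computation. After reducing to $\pi$ a morphism and arranging (by Lemma \ref{totally invariant prime divisors}) that every $\pi$-exceptional prime divisor $E$ is totally invariant with $f^*E=rE$, $r>1$, and setting $a=a_E(Y,\Gamma+D_Y)>0$, one gets exactly
\[
\ord_E(D_{X,i})=1-\ord_E(\Delta)-\frac{a}{r^i},
\]
which is $\geq 0$ for $i\gg 0$ since $\ord_E(\Delta)<1$. This is precisely the rigorous form of your ``positive accumulation dominates the decaying error,'' and it is driven by $\ord_E(\Delta)<1$, not by ``a judicious choice of $\Omega_Y$ placing the negative support of $E$ inside $\Supp R_\Delta$'' --- no such arrangement is required or even available. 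Finally, your bigness justification is off: $D^{(k)}$ does not literally contain $d^{-k}f^k_*\pi^{-1}_*\Omega_Y$, because $d^{-k}f^k_*D^{(0)}$ also carries the subtracted $d^{-k}f^k_*E$. The paper instead proves $R_{\Delta,l}+D_{X,m}\geq\pi^*(R_\Gamma+D_{Y,m})$ for $l\gg 0$ and deduces relative bigness of $D_{X,m}$ from that of $R_\Gamma+D_{Y,m}$.
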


\begin{proof}
We take $W$ the normalization of the graph of $\pi$ as follows,
\[
\xymatrix{
& \ar[ld]_-{\mu_X} W \ar[rd]^-{\mu_Y} & \\
X \ar@{-->}[rr]_-\pi & & Y.
}
\]
Then $f$ and $g$ induce the endomorphism $f'$ of $W$ such that $(W,\Omega=(\mu_X)^{-1}_*\Delta)$ is an $f'$-pair after iterating $f'$.
Indeed, since $(\mu_X)_*R_\Omega=R_\Delta$ is effective, it is enough to show that $\ord_E(R_\Omega) \geq 0$ for every exceptional prime divisor $E$.
Since we have
\[
R_\Omega=R_{f'}+\Omega-(f')^*\Omega
\]
and $\ord_E((f')^*\Omega)=0$, we obtain $\ord_E(R_\Omega) \geq 0$.
By Proposition \ref{fano type birational contraction}, it is enough to show that $(W,\Omega)$ is of Fano type over $B$.
Hence, we may assume that $\pi$ is a morphism.

By Proposition \ref{first property of Fano type}, there exits an effective $b_Y$-big $\Q$-Weil divisor $D_Y$ on $Y$ such that $K_Y+\Gamma+D_Y\equiv_{B} 0$ and $(Y,\Gamma+D_Y)$ is klt. 
We define a $\Q$-Weil divisor $D_X$ on $X$ as 
\[
K_X+\Delta+D_X=\pi^*(K_Y+\Gamma+D_Y).
\]
Then $K_X+\Delta+D_X\equiv_B 0$.
Next, let
\[
D_{Y.i}=\deg(g^i)^{-1}(g_i)_*(R_{g^i}+\Gamma+D_Y)-\Gamma
\]
and
\[
D_{X.i}=\deg(f^i)^{-1}(f_i)_*(R_{f^i}+\Delta+D_X)-\Delta.
\]
Then by the same argument as in the proof of Lemma \ref{Fano type if the ramification divisor contains ample div} and \cite[Lemma 1.1]{fujino-gongyo},
$D_{Y,i}$ is effective, $(Y,\Gamma+D_{Y,i})$ is klt, $K_Y+\Gamma+D_{Y,i}\equiv_B0$ and
\[
K_X+\Delta+D_{X,i}=\pi^*(K_Y+\Gamma+D_{Y,i}).
\]

\begin{claim}\label{equivariant birational map preserves fano type claim1}
There exists a positive integer $m$ such that $D_{X,m}$ is effective.
\end{claim}

\begin{claimproof}
Since $\pi_*D_{X,i}=D_{Y,i}$, it is enough to show that for any exceptional prime divisor $E$ on $X$ such that $\ord_E(D_{X,m})$ is effective for some $m$.
We fix an exceptional prime divisor $E$ on $X$.
We may assume that $E$ is totally invariant under $f$ replacing $f$ by some iterate.
Let $f^*E=rE$ for some $r>1$ since $f$ is int-amplified.
Let $f_*E=eE$, then $re=\deg(f)$.
Let 
\[
a = a_E(Y,\Gamma+D_Y)>0,
\]
where $a_E(Y,\Gamma+D_Y)$ is the log discrepancy of $(Y,\Gamma+D_Y)$ with respect to $E$.
Then $\ord_E(\Delta+D_Y)=1-a$ by the definition of the log discrepancy.
Then we have
\begin{eqnarray*}
\ord_E(D_{X,i})&=& \ord_E(\deg(f^i)^{-1}(f_i)_*(R_{f^i}+\Delta+D_X)-\Delta) \\
&=& \deg(f^i)^{-1}e^i(r^i-1+\ord_E(\Delta+D_X))-\ord_E(\Delta) \\
&=& \frac{r^i-a}{r^i}-\ord_E(\Delta) \\
&=& 1-\ord_E(\Delta)-\frac{a}{r^i} \geq 0
\end{eqnarray*}
for enough large $i$.
\end{claimproof}

By the claim, we obtain an effective $\Q$-Weil divisor $D_{X,m}$ such that $K_X+\Delta+D_{X,m}\equiv_B0$ and $(X,\Delta+D_{X,m})$ is klt.
It is enough to show that $D_{X,m}$ is big over $B$.
Since we have
\[
R_{\Delta,l}\sim K_X+\Delta-(f^l)^*(K_X+\Delta)\equiv_B (f^l)^*D_{X,m}-D_{X,m},
\]
it is enough to show that $R_{\Delta,l}+D_{X,m}$ is big over $B$ for some $l$.
Since we have
\[
R_{\Gamma} \sim K_Y+\Gamma-g^*(K_Y+\Gamma) \equiv_B g^*D_{Y,m}-D_{Y,m}
\]
and $D_{Y,m}\equiv_B -(K_Y+\Gamma)$ is big over $B$,
$R_{\Gamma}+D_{Y,m}$ is big over $B$.
Hence it is enogh to show that the following claim.
\begin{claim}
$R_{\Delta,l}+D_{X,m} \geq \pi^*(R_{\Gamma}+D_{Y,m})$ for some positive integer $l$.
\end{claim}

\begin{claimproof}
Since we have
\[
\pi_*(R_{\Delta,l}+D_{X,m}) \geq \pi_*(R_{\Delta}+D_{X,m}) = R_{\Gamma}+D_{Y,m},
\]
it is enough to consider the coefficients with respect to the exceptional prime devisors.
We take an exceptional prime divisor $E$ on $X$ and we may assume that $E$ is totally invariant under $f$.
Let $f^*E=rE$.
Then we have
\begin{eqnarray*}
\ord_E(R_{\Delta,l}+D_{X,m}) 
&\geq& \ord_E(R_{\Delta,l}) =\ord_E(R_{f^l}+\Delta-(f^l)^*\Delta) \\
&=& r^l-1+\ord_E(\Delta)-r^l\ord_E(\Delta) \\
&=& (r^l-1)(1-\ord_E(\Delta)) \geq \ord_E(\pi^*(R_\Gamma+D_{Y,m}))
\end{eqnarray*}
for large enough $l$,
since $\ord_E(\pi^*(R_\Gamma+D_{Y,m}))$ does not depend on $l$.
\end{claimproof}
\end{proof}

\begin{thm}\label{main thm for pairs}
Let $X$ be a normal $\Q$-factorial projective variety admitting an int-amplified endomorphism $f$ and $\Delta$ an effective $\Q$-Weil divisor on $X$ such that $(X,\Delta)$ is a klt $f$-pair.
Then there exist following morphisms and pairs
\[
\xymatrix{
(X,\Delta) & (\widetilde{X},\widetilde{\Delta}) \ar[r]^-{\widetilde{\pi}} \ar[l]_-{\mu} & A, 
 \\
}
\]
such that
\begin{itemize}
    \item $\mu$ is a quasi-\'etale cover of $(X,\Delta)$,
    \item $\widetilde{\pi}$ is a fiber space,
    \item $A$ is an abelian variety, 
    \item $\widetilde{X}$ is a normal variety, and
    \item $(\widetilde{X},\widetilde{\Delta})$ is of Fano type over $A$.
\end{itemize}
In particular, $\widetilde{\pi}$ is the albanese morphism.
Moreover, if $\phi$ is a surjective endomorphism of $X$ such that $(X,\Delta)$ is a $\phi$-pair, then we obtain the following diagram
\[
\xymatrix{
X  \ar[d]_{\phi^m}& \widetilde{X} \ar[l]_\mu \ar[r]^{\widetilde{\pi}} \ar[d]_{\widetilde{\phi}} & A \ar[d]_{\phi_A} \\
X & \widetilde{X} \ar[l]^\mu \ar[r]_{\widetilde{\pi}} & A \\
}
\]
for some $m$, where $\widetilde{\phi}$ and $\phi_A$ are endomorphisms.
\end{thm}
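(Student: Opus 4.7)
The strategy is to combine the maximal MMP of canonical bundle formula type with the Fano-type transfer propositions just established. I would begin by running a maximal sequence of steps of MMP of canonical bundle formula type for $(X,\Delta)$, whose existence is guaranteed by Remark \ref{steps of MMP of canonical bundle formula type preserves f-pair}. That remark further ensures that each intermediate pair remains an $f_i$-pair, that the terminal pair $(W,\Delta_W)$ is a $g$-pair for some int-amplified endomorphism $g$ on $W$, and, by maximality, that $K_W+\Delta_W\sim_{\Q}0$.

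Next I would apply Theorem \ref{construction of a towers of mfs} to this sequence, producing a birational contraction $\sigma\colon(X,\Delta)\dashrightarrow(X',\Delta')$ over towers of Mori fiber spaces of canonical bundle formula type together with a composition $(X',\Delta')\to\cdots\to(W,\Delta_W)$, such that $(X',\Delta')$ is of Fano type over $W$. Independently, I would apply Theorem \ref{lifting of cover of abelian variety} to the same maximal MMP to obtain an abelian variety $A$, a quasi-\'etale cover $\mu\colon(\widetilde{X},\widetilde{\Delta})\to(X,\Delta)$, and a morphism $\widetilde{\pi}\colon\widetilde{X}\to A$ fitting into the required square (in particular, $\pi_X\colon X\to W$ is a morphism). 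The same theorem supplies the equivariant lift of any surjective endomorphism $\phi$ for which $(X,\Delta)$ is a $\phi$-pair.

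To establish the Fano-type assertion, after replacing $f$ by an iterate so that $\sigma$ is equivariant, Proposition \ref{equivariant birational map preserves fano type} applied to $\sigma$ transfers Fano type from $(X',\Delta')$ to $(X,\Delta)$ over $W$; Proposition \ref{quasi-etale preserves fano ytpe} then transfers it across the quasi-\'etale cover $\mu$, giving that $(\widetilde{X},\widetilde{\Delta})$ is of Fano type over $W$. Since $\mu_W\colon A\to W$ is finite, relative ampleness over $W$ agrees with relative ampleness over $A$, so $(\widetilde{X},\widetilde{\Delta})$ is in fact of Fano type over $A$. The map $\widetilde{\pi}$ is a fiber space because the tower of Mori fiber spaces of canonical bundle formula type (modified by a birational contraction) forces $\pi_X$ to have integral generic fiber, and this is inherited by the normalization of the main component of $X\times_W A$. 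Finally, since the general fiber of $\widetilde{\pi}$ is of Fano type it is rationally connected and has trivial albanese, so the universal property identifies $\widetilde{\pi}$ with $\mathrm{alb}_{\widetilde{X}}$.

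The main technical hurdle is the simultaneous equivariance: to legitimately chain together Theorems \ref{equivariant extremal ray contraction}, \ref{construction of a towers of mfs}, and \ref{lifting of cover of abelian variety} with Proposition \ref{equivariant birational map preserves fano type}, one must pass to a single iterate $f^N$ making every MMP step, every Mori fiber space of canonical bundle formula type in the tower, the cover $\mu_W\colon A\to W$, and the cover $\mu_X\colon\widetilde{X}\to X$ equivariant at once, while verifying that the $f$-pair condition is preserved at every stage (Propositions \ref{mfs of cbf type for f-pairs} and \ref{quasi-rtale preserves f-pairness} doing this for Mori fiber spaces of canonical bundle formula type and quasi-\'etale covers respectively). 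Once that bookkeeping is arranged, the rest is a direct assembly of the already-proved transfer results.
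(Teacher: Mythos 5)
Your proof proposal follows essentially the same route as the paper's: run a maximal sequence of steps of MMP of canonical bundle formula type, apply Theorem \ref{lifting of cover of abelian variety} for the cover $\mu\colon\widetilde X\to X$ and $\widetilde\pi\colon\widetilde X\to A$, apply Theorem \ref{construction of a towers of mfs} together with Proposition \ref{equivariant birational map preserves fano type} to pull Fano type over $W$ back to $(X,\Delta)$, and transfer across the quasi-\'etale cover via Proposition \ref{quasi-etale preserves fano ytpe}. The only points where you are more explicit than the paper are the observation that finiteness of $\mu_W\colon A\to W$ makes Fano type over $W$ equivalent to Fano type over $A$, and the justification that $\widetilde\pi$ is a fiber space; the paper leaves both implicit, so your remarks are harmless clarifications rather than a different argument.
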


\begin{proof}
Let $(X,\Delta) \dashrightarrow \cdots \dashrightarrow (W,\Delta_W)$ be a maximal sequence of steps of MMP of canonical bundle formula type.
By Theorem \ref{lifting of cover of abelian variety}, we obtain the following commutative diagram
\[
\xymatrix{
(\widetilde{X},\widetilde{\Delta}) \ar[r]^-{\widetilde{\pi}} \ar[d]_-{\mu} & (A,0) \ar[d]^-{\mu_W} \\
(X,\Delta) \ar[r]_-{\pi_X} & (W,\Delta_W) \\
}
\]
as in Theorem \ref{lifting of cover of abelian variety}. 

We prove that $(\widetilde{X},\widetilde{\Delta})$ is of Fano type over $A$.
By Proposition \ref{quasi-etale preserves fano ytpe}, it is enough to show that $(X,\Delta)$ is of Fano type over $W$.
By Theorem \ref{construction of a towers of mfs}, we obtain a sequence of birational contractionas over towers of Mori fiber spaces $(X,\Delta) \dashrightarrow (X',\Delta')$ such that $(X',\Delta')$ is of Fano type over $W$.
By the construction, replacing $f$ by some iterate, we obtain the following commutative diagram
\[
\xymatrix{
f \acts X \ar@{-->}[rr] \ar[rd] & & X' \racts f' \ar[ld] \\
& h \acts W.  &  \\
}
\]
By Proposition \ref{equivariant birational map preserves fano type}, $(X,\Delta)$ is of Fano type over $W$.
Moreover, a general fiber of $\widetilde{\pi}$ is of Fano type, in particular, it is rationally connected by \cite{hacon-mackernan07}.
Since $\widetilde{\pi}$ is a fiber space, $\widetilde{\pi}$ is the albanese morphism.
\end{proof}

\begin{thm}[Theorem \ref{main thm}, Theorem \ref{main thm; lifting}]\label{main thm'}
Let $X$ be a normal $\Q$-factorial klt projective variety admitting an int-amplified endomorphism.
Then there exists a quasi-\'etale finite cover $\mu \colon \widetilde{X} \to X$ such that
the albanese morphism $\mathrm{alb}_{\widetilde{X}} \colon \widetilde{X} \to A$ is a fiber space and $\widetilde{X}$ is of Fano type over $A$.

Moreover, if $\phi$ is a surjective endomorphism of $X$ such that $(X,\Delta)$ is a $\phi$-pair, then we obtain the following commutative diagram
\[
\xymatrix{
X  \ar[d]_{\phi^m}& \widetilde{X} \ar[l]_\mu \ar[r]^{\pi} \ar[d]_{\widetilde{\phi}} & A \ar[d]_{\phi_A} \\
X & \widetilde{X} \ar[l]^\mu \ar[r]_{\pi} & A \\
}
\]
for some $m$, where $\widetilde{\phi}$ and $\phi_A$ are endomorphisms.
\end{thm}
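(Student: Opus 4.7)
The plan is to deduce this theorem as a direct specialization of Theorem \ref{main thm for pairs} applied to the pair $(X,0)$. Since $(X,0)$ is tautologically an $f$-pair for any surjective endomorphism $f$ (as noted in the remark following Definition \ref{definition of f-pair}), and since $X$ is klt by hypothesis, the assumptions of Theorem \ref{main thm for pairs} are met. Applying it produces a quasi-\'etale cover $\mu \colon (\widetilde{X},\widetilde{\Delta}) \to (X,0)$, a fiber space $\widetilde{\pi} \colon \widetilde{X} \to A$ to an abelian variety $A$, and the information that $(\widetilde{X},\widetilde{\Delta})$ is of Fano type over $A$, with $\widetilde{\pi}$ being the albanese morphism of $\widetilde{X}$.

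The remaining step is to check that $\widetilde{\Delta}=0$, so that the conclusion becomes a statement about $\widetilde{X}$ itself. By definition of a quasi-\'etale cover of $(X,0)$, one has $\widetilde{\Delta}=\mu^{*}0-R_{\mu}=-R_{\mu}$, and this is required to be effective. Since $R_{\mu}$ is always an effective divisor, this forces $R_{\mu}=0$; equivalently, $\mu$ is \'etale in codimension one. In particular $\mu$ is a quasi-\'etale finite cover in the sense of the introduction, $\widetilde{\Delta}=0$, and the statement that $(\widetilde{X},0)$ is of Fano type over $A$ is exactly the statement that $\widetilde{X}$ is of Fano type over $A$. Together with the fact that $\widetilde{\pi}=\mathrm{alb}_{\widetilde{X}}$ is a fiber space, this gives the first assertion.

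For the moreover part concerning an arbitrary surjective endomorphism $\phi$ of $X$, the same trick applies: $(X,0)$ is a $\phi$-pair without any further hypothesis, so the moreover part of Theorem \ref{main thm for pairs} produces, for some positive integer $m$, endomorphisms $\widetilde{\phi}$ of $\widetilde{X}$ and $\phi_{A}$ of $A$ fitting into the required commutative diagram with $\phi^{m}$, $\mu$, and $\widetilde{\pi}=\mathrm{alb}_{\widetilde{X}}$. There is no real obstacle here; the substantive work has already been carried out in the construction of the maximal sequence of steps of MMP of canonical bundle formula type and in Theorem \ref{lifting of cover of abelian variety}, which produce the cover by an abelian variety and the lifting of endomorphisms, and in Proposition \ref{equivariant birational map preserves fano type}, which transports the Fano-type-over-$W$ property from the terminal variety $X'$ of the equivariant MMP back to $X$.
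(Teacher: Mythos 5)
Your proof is correct and follows exactly the paper's argument: specialize Theorem \ref{main thm for pairs} to the pair $(X,0)$, note that $(X,0)$ is tautologically a $\phi$-pair for every surjective endomorphism $\phi$, and observe that effectivity of $\widetilde{\Delta}=-R_{\mu}$ forces $R_{\mu}=0$, so the quasi-\'etale cover of $(X,0)$ is \'etale in codimension one with $\widetilde{\Delta}=0$. Nothing is missing.
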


\begin{proof}
Applying Theorem \ref{main thm for pairs} for $(X,0)$, we obtain Theorem \ref{main thm'}.
We note that $(X,0)$ is a pair with respect to all surjective endomorphisms of $X$ and quasi-\'etale cover of $(X,0)$ is \'etale in codimension one.
\end{proof}

The following corollary gives a characterization of Fano type admitting an int-amplified endomorphism.

\begin{cor}[Corolarry \ref{main cor}]\label{main cor'}
Let $X$ be a normal $\Q$-factorial klt projective variety admitting an int-amplified endomorphism.
The following conditions are equivalent to each other.
\begin{enumerate}
    \item $X$ is of Fano type.
    \item $\pi_{1}^{\mathrm{\acute{e}t}}(X_{\mathrm{sm}})$ is finite, where $\pi_{1}^{\mathrm{\acute{e}t}}(X_{\mathrm{sm}})$ is the \'etale fundamental group of the smooth locus of $X$.  
\end{enumerate}
Furthermore, if we further assume that $X$ is smooth, then the following condition is also equivalent.

(3) $X$ is rationally connected.
\end{cor}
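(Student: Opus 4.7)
The plan is to prove (1) $\Rightarrow$ (2) by invoking a known structure result on Fano type varieties, to establish (2) $\Rightarrow$ (1) via Theorem \ref{main thm'}, and in the smooth case to deduce (1) $\Rightarrow$ (3) from the rational connectedness of Fano type varieties \cite{zhang06, hacon-mackernan07} combined with (3) $\Rightarrow$ (2) from the simple connectedness of smooth rationally connected varieties in characteristic zero.

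For (1) $\Rightarrow$ (2), I would cite the theorem of Braun (building on earlier work of Greb--Kebekus--Peternell and others) that the \'etale fundamental group of the smooth locus of a klt Fano type variety is finite. This implication does not require the existence of an int-amplified endomorphism.

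For (2) $\Rightarrow$ (1), apply Theorem \ref{main thm'} to obtain a quasi-\'etale cover $\mu \colon \widetilde{X} \to X$ and the albanese fiber space $\mathrm{alb}_{\widetilde{X}} \colon \widetilde{X} \to A$ with $\widetilde{X}$ of Fano type over $A$. Since $\mu$ is \'etale in codimension one, it corresponds to a finite-index subgroup of $\pi_{1}^{\mathrm{\acute{e}t}}(X_{\mathrm{sm}})$, so the hypothesis forces $\pi_{1}^{\mathrm{\acute{e}t}}(\widetilde{X}_{\mathrm{sm}})$ to be finite as well. Because $\mathrm{alb}_{\widetilde{X}}$ is a fiber space, one has the chain of surjections
\[
\pi_{1}^{\mathrm{\acute{e}t}}(\widetilde{X}_{\mathrm{sm}}) \twoheadrightarrow \pi_{1}^{\mathrm{\acute{e}t}}(\widetilde{X}) \twoheadrightarrow \pi_{1}^{\mathrm{\acute{e}t}}(A) \cong \widehat{\mathbb{Z}}^{2\dim A},
\]
so finiteness of the leftmost group forces $\dim A = 0$. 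Hence $\widetilde{X}$ is of Fano type, and Proposition \ref{quasi-etale preserves fano ytpe}, applied with $B$ a point and $\Delta = 0$ (so that $\widetilde{\Delta} = 0$ as well, since $\mu$ is \'etale in codimension one), then yields that $X$ itself is of Fano type.

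For the smooth case, (1) $\Rightarrow$ (3) is just \cite{zhang06, hacon-mackernan07}, while (3) $\Rightarrow$ (2) follows from the theorem of Koll\'ar--Miyaoka--Mori and Campana that a smooth rationally connected variety in characteristic zero is simply connected. The main obstacle is the fundamental group chain appearing in (2) $\Rightarrow$ (1): one has to verify the first surjection, which comes from deleting a codimension two subset, and the second surjection, which comes from the fiber space property together with connectedness of general fibers. Both facts are standard but deserve careful invocation.
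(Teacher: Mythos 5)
Your proposal is correct and follows essentially the same route as the paper: (1) implies (2) by the finiteness of the \'etale fundamental group of the smooth locus of a klt Fano type variety (the paper cites Greb--Kebekus--Peternell where you cite Braun), (2) implies (1) by applying the main theorem and concluding $\dim A = 0$ from finiteness of the fundamental group, then descending Fano type along the quasi-\'etale cover via Proposition \ref{quasi-etale preserves fano ytpe}, and the smooth case via rational connectedness. The only difference is that you spell out the surjection chain $\pi_1^{\mathrm{\acute{e}t}}(\widetilde{X}_{\mathrm{sm}}) \twoheadrightarrow \pi_1^{\mathrm{\acute{e}t}}(\widetilde{X}) \twoheadrightarrow \pi_1^{\mathrm{\acute{e}t}}(A)$ that forces $A$ to be a point, whereas the paper leaves this step implicit.
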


\begin{proof}
The condition (1) implies the condition (2) by \cite[Theorem 1.13]{greb-kebekus-peternell}.
We assume that the condition (2).
By Theorem \ref{main thm}, we obtain the following diagram
\[
\xymatrix{
X & \widetilde{X} \ar[l]_\mu \ar[r]^{\pi}  & A \\
}
\]
as in Theorem \ref{main thm}.
Since $\mu$ is quasi-\'etale, $\mu^{-1}(X_{\mathrm{sm}}) \to X_{\mathrm{sm}}$ is \'etale by the Zariski-Nagata purity theorem \cite[Theorem 41.1]{nagata}.
In particular, $\pi^{\mathrm{\acute{e}t}}_1(\widetilde{X}_{\mathrm{sm}})$ is also finite.
Hence $A$ is a point, in particular, $\widetilde{X}$ is of Fano type.
By Proposition \ref{quasi-etale preserves fano ytpe}, $X$ is also of Fano type.

If we further assume that $X$ is smooth rationally connected, then $\pi_{1}^{\mathrm{\acute{e}t}}(X)$ is trivial, so $X$ is of Fano type.
On the other hand, varieties of Fano type are rationally connected by \cite
{hacon-mackernan07}.
\end{proof}

Corollary \ref{main cor'} is an affirmative answer for the following conjecture in the case where $\pi_{1}^{\mathrm{\acute{e}t}}(X_{\mathrm{sm}})$ is finite and $X$ is $\Q$-factorial klt.

\begin{conj}[\textup{\cite[Conjecture 1.2]{broustet-gongyo17}}]
Let $X$ be a normal variety admitting a non-invertible polarized endomorphism.
Then $X$ is of Calabi-Yau type.
\end{conj}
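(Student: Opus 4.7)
My strategy is to bootstrap from Theorem~\ref{main thm'} by combining it with two additional ingredients: an implication ``Fano type over an abelian variety $\Rightarrow$ Calabi--Yau type'' for the total space, and a descent statement for Calabi--Yau type along quasi-\'etale covers. Since a non-invertible polarized endomorphism is automatically int-amplified, the machinery of Sections~3--5 becomes available as soon as one is in the $\Q$-factorial klt setting.

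First I would reduce the conjecture to the case in which $X$ is $\Q$-factorial and klt. By Proposition~\ref{first properties of int-ampl endo}(4), $-K_X$ is already numerically equivalent to an effective $\Q$-divisor, which in many situations forces $X$ to have at worst log canonical (and, after a mild perturbation, klt) singularities. I would attempt an $f$-equivariant small $\Q$-factorialization of such a model, run the rest of the argument there, and then descend Calabi--Yau type back to $X$ using birational geometry of lc centers. This reduction is precisely the generality that the present paper does not handle and is where I expect the first serious obstacle.

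Granting that reduction, apply Theorem~\ref{main thm'} to $(X,0)$ to obtain a quasi-\'etale $\mu \colon \widetilde{X} \to X$ together with a fiber space $\pi \colon \widetilde{X} \to A$ to an abelian variety such that $\widetilde{X}$ is of Fano type over $A$. The key intermediate claim would be: any normal $\Q$-factorial klt variety $Y$ of Fano type over an abelian variety, compatibly with an int-amplified endomorphism, is of Calabi--Yau type. Using Proposition~\ref{first property of Fano type}, pick an effective $\pi$-big $\Omega$ with $(Y,\Omega)$ klt and $K_Y+\Omega \sim_{\Q,A} 0$, so $K_Y+\Omega \sim_{\Q} \pi^*L$ for some $\Q$-Cartier $L$ on $A$. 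The Ambro--Kawamata canonical bundle formula gives $L \sim_{\Q} K_A + B_A + M_A = B_A + M_A$ with $B_A \geq 0$ the discriminant and $M_A$ the nef moduli part. Equivariance under the induced endomorphism $\psi \colon A \to A$, which is again int-amplified by Proposition~\ref{first properties of int-ampl endo}(2), together with the ramification identity in Proposition~\ref{mfs of cbf type for f-pairs}, should force $M_A \sim_{\Q} 0$ and exhibit $B_A$ as a $\Q$-divisor absorbable into the boundary; modifying $\Omega$ by a suitable $\pi^*(B_A+M_A)$ then produces $\Omega'$ with $(Y,\Omega')$ lc and $K_Y+\Omega' \sim_{\Q} 0$.

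Finally, to descend Calabi--Yau type along $\mu$, I would imitate Proposition~\ref{quasi-etale preserves fano ytpe}: given a log Calabi--Yau pair $(\widetilde{X},\widetilde{\Omega})$, set $\Delta_X = \deg(\mu)^{-1}\mu_*(R_\mu + \widetilde{\Omega})$; this is an effective $\Q$-Weil divisor with $K_X+\Delta_X \sim_{\Q} 0$ and $(X,\Delta_X)$ lc, via the Fujino--Gongyo construction \cite[Lemma 1.1]{fujino-gongyo}. The two principal obstacles I anticipate are (a) the reduction to $\Q$-factorial klt in full generality, and (b) upgrading the equivariant numerical control of the output $L = B_A+M_A$ of the canonical bundle formula to genuine $\Q$-linear triviality, so that the resulting boundary is an honest lc log Calabi--Yau and not merely a numerical one.
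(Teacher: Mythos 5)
There is a genuine gap, and it starts with the status of the statement itself: this is an \emph{open conjecture} quoted from Broustet--Gongyo, not a theorem of the paper. The paper proves only a special case, namely Corollary \ref{main cor'}: when $X$ is $\Q$-factorial klt and $\pi_{1}^{\mathrm{\acute{e}t}}(X_{\mathrm{sm}})$ is finite (e.g.\ $X$ smooth and rationally connected), the abelian base $A$ produced by Theorem \ref{main thm'} is forced to be a point and one even gets Fano type. Your proposal does not close the conjecture either, and the two places where it breaks are exactly the ones you flag. (a) The reduction to the $\Q$-factorial klt setting is not available: a normal variety with a polarized endomorphism need not even be $\Q$-Gorenstein, so Proposition \ref{first properties of int-ampl endo}(4) does not apply, and effectivity of $-K_X$ would in any case not yield klt (or lc) singularities; no equivariant dlt modification or $\Q$-factorialization argument is supplied, and none exists in the paper. (b) The key intermediate claim ``Fano type over an abelian variety $\Rightarrow$ Calabi--Yau type'' is not established. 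Writing $K_{\widetilde X}+\Omega\sim_{\Q}\widetilde\pi^{*}L$, what you need is $L\sim_{\Q}0$ (or that an effective representative of $\pi^{*}L$ can be subtracted from $\Omega$ while staying effective); the canonical bundle formula only gives $L\sim_{\Q}B_A+M_A$ with $B_A\geq 0$ and $M_A$ nef, and on an abelian variety nef includes numerically trivial non-torsion classes in $\Pic^{0}(A)$, which are precisely not $\Q$-effective. Your assertion that equivariance under the induced int-amplified endomorphism of $A$ ``should force'' $M_A\sim_{\Q}0$ and make $B_A$ absorbable is exactly the unproved content; the paper's own mechanism for getting $\Q$-linear triviality (maximality of the MMP sequence giving $K_W+\Delta_W$ pseudo-effective together with $-(K_W+\Delta_W)$ effective, as in Remark \ref{steps of MMP of canonical bundle formula type preserves f-pair} and Proposition \ref{convering theorem}) lives on the base $W$ and says nothing about the relative class $L$ upstairs.

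The remaining ingredient, descent of Calabi--Yau type along the quasi-\'etale cover $\mu$ via the Fujino--Gongyo pushforward as in Proposition \ref{quasi-etale preserves fano ytpe}, is indeed unproblematic. But since steps (a) and (b) are the substance of the conjecture beyond what Theorem \ref{main thm'} and Corollary \ref{main cor'} already give, the proposal should be regarded as a programme with two acknowledged essential gaps rather than a proof; in particular it cannot be compared favourably with a proof in the paper, because the paper deliberately stops short of this statement.
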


\bibliography{bibliography}

\providecommand{\bysame}{\leavevmode\hbox to3em{\hrulefill}\thinspace}
\providecommand{\MR}{\relax\ifhmode\unskip\space\fi MR }
\providecommand{\MRhref}[2]{%
  \href{http://www.ams.org/mathscinet-getitem?mr=#1}{#2}
}
\providecommand{\href}[2]{#2}
\begin{thebibliography}{CMZ19}

\bibitem[Amb05]{ambro05}
Florin Ambro, \emph{The moduli {$b$}-divisor of an lc-trivial fibration},
  Compos. Math. \textbf{141} (2005), no.~2, 385--403. \MR{2134273}

\bibitem[BG17]{broustet-gongyo17}
Ama\"{e}l Broustet and Yoshinori Gongyo, \emph{Remarks on log {C}alabi-{Y}au
  structure of varieties admitting polarized endomorphisms}, Taiwanese J. Math.
  \textbf{21} (2017), no.~3, 569--582. \MR{3661381}

\bibitem[CMZ19]{cascini-meng-zhang}
Paolo Cascini, Sheng Meng, and De-Qi Zhang, \emph{Polarized endomorphisms of
  normal projective threefolds in arbitrary characteristic}, Mathematische
  Annalen (2019), 1--29.

\bibitem[DCS16]{cerbo-svaldi}
Gabriele Di~Cerbo and Roberto Svaldi, \emph{Birational boundedness of low
  dimensional elliptic calabi-yau varieties with a section}, arXiv preprint
  arXiv:1608.02997 (2016).

\bibitem[FG12]{fujino-gongyo}
Osamu Fujino and Yoshinori Gongyo, \emph{On canonical bundle formulas and
  subadjunctions}, Michigan Math. J. \textbf{61} (2012), no.~2, 255--264.
  \MR{2944479}

\bibitem[GKP16]{greb-kebekus-peternell}
Daniel Greb, Stefan Kebekus, and Thomas Peternell, \emph{\'{E}tale fundamental
  groups of {K}awamata log terminal spaces, flat sheaves, and quotients of
  abelian varieties}, Duke Math. J. \textbf{165} (2016), no.~10, 1965--2004.
  \MR{3522654}

\bibitem[HM07]{hacon-mackernan07}
Christopher~D. Hacon and James Mckernan, \emph{On {S}hokurov's rational
  connectedness conjecture}, Duke Math. J. \textbf{138} (2007), no.~1,
  119--136. \MR{2309156}

\bibitem[Kaw85]{kawamata85}
Yujiro Kawamata, \emph{Minimal models and the {K}odaira dimension of algebraic
  fiber spaces}, J. Reine Angew. Math. \textbf{363} (1985), 1--46. \MR{814013}

\bibitem[KM98]{kollar-mori}
J\'{a}nos Koll\'{a}r and Shigefumi Mori, \emph{Birational geometry of algebraic
  varieties}, Cambridge Tracts in Mathematics, vol. 134, Cambridge University
  Press, Cambridge, 1998, With the collaboration of C. H. Clemens and A. Corti,
  Translated from the 1998 Japanese original. \MR{1658959}

\bibitem[Men17]{meng17}
Sheng Meng, \emph{Building blocks of amplified endomorphisms of normal
  projective varieties}, Mathematische Zeitschrift (2017), 1--21.

\bibitem[MY19a]{matsuzawa-yoshikawa19-surface}
Yohsuke Matsuzawa and Shou Yoshikawa, \emph{Int-amplified endomorphisms on
  normal projective surfaces}, arXiv preprint arXiv:1902.06071 (2019).

\bibitem[MY19b]{matsuzawa-yoshikawa19}
\bysame, \emph{Kawaguchi-silverman conjecture for endomorphisms on rationally
  connected varieties admitting an int-amplified endomorphism}, arXiv preprint
  arXiv:1908.11537 (2019).

\bibitem[MZ18a]{meng-zhang}
Sheng Meng and De-Qi Zhang, \emph{Building blocks of polarized endomorphisms of
  normal projective varieties}, Adv. Math. \textbf{325} (2018), 243--273.
  \MR{3742591}

\bibitem[MZ18b]{meng-zhang18-survey}
\bysame, \emph{Normal projective varieties admitting polarized or int-amplified
  endomorphisms}, Acta Mathematica Vietnamica (2018), 1--16.

\bibitem[MZ18c]{meng-zhang2}
\bysame, \emph{Semi-group structure of all endomorphisms of a projective
  variety admitting a polarized endomorphism}, arXiv preprint arXiv:1806.05828
  (2018).

\bibitem[Nag62]{nagata}
Masayoshi Nagata, \emph{Local rings}, Interscience Tracts in Pure and Applied
  Mathematics, No. 13, Interscience Publishers a division of John Wiley \&
  Sons\, New York-London, 1962. \MR{0155856}

\bibitem[Nak02]{nakayama02}
Noboru Nakayama, \emph{Ruled surfaces with non-trivial surjective
  endomorphisms}, Kyushu J. Math. \textbf{56} (2002), no.~2, 433--446.
  \MR{1934136}

\bibitem[PS09]{prokhorov-shokurov}
Yu.~G. Prokhorov and V.~V. Shokurov, \emph{Towards the second main theorem on
  complements}, J. Algebraic Geom. \textbf{18} (2009), no.~1, 151--199.
  \MR{2448282}

\bibitem[SS10]{schwede-smith}
Karl Schwede and Karen~E. Smith, \emph{Globally {$F$}-regular and log {F}ano
  varieties}, Adv. Math. \textbf{224} (2010), no.~3, 863--894. \MR{2628797}

\bibitem[Yos18]{yoshikawa18}
Shou Yoshikawa, \emph{Singularities of non-$\mathbb{Q}$-gorenstein varieties
  admitting a polarized endomorphism}, arXiv preprint arXiv:1811.01795 (2018).

\bibitem[Yos19]{yoshikawa19}
\bysame, \emph{Global $f$-splitting of surfaces admitting an int-amplified
  endomorphism}, arXiv preprint arXiv:1911.01181 (2019).

\bibitem[Zha06]{zhang06}
Qi~Zhang, \emph{Rational connectedness of log {${\bf Q}$}-{F}ano varieties}, J.
  Reine Angew. Math. \textbf{590} (2006), 131--142. \MR{2208131}

\end{thebibliography}
\bibliographystyle{amsalpha}
\end{document}